\documentclass[a4paper, 11pt]{article}
\usepackage{verbatim}

\usepackage{appendix}
\usepackage{amsmath,amssymb, amsthm}
\newtheorem{theorem}{Theorem}[section]

\newtheorem{corollary}[theorem]{Corollary}

\newtheorem{lemma}[theorem]{Lemma}

\newtheorem{proposition}[theorem]{Proposition}
\newtheorem{remark}[theorem]{Remark}

\topmargin 0pt \advance \topmargin by -\headheight \advance
\topmargin by -\headsep
\textheight 8.9in
\oddsidemargin 0pt \evensidemargin \oddsidemargin \marginparwidth 0.5in
\textwidth 6.5in

\title{Principal submatrices, restricted invertibility and a quantitative Gauss-Lucas theorem}
\begin{document}
\author{ Mohan Ravichandran\footnote{email : mohan.ravichandran@msgsu.edu.tr, supported by Tubitak 115F204}}
\date{}
\maketitle

\begin{abstract}
We apply the techniques developed by Marcus, Spielman and Srivastava, working with principal submatrices in place of rank $1$ decompositions to give an alternate proof of their results on restricted invertibility.  We show that one can find well conditioned column submatrices all the way upto the so called modified stable rank. All constructions are algorithmic. A byproduct of these results is an interesting quantitative version of the classical Gauss-Lucas theorem on the critical points of complex polynomials.  We show that for any degree $n$ polynomial $p$ and any $c \geq \frac{1}{2}$, the area of the convex hull of the roots of $p^{(cn)}$ is at most $4(c-c^2)$ that of the area of the convex hull of the roots of $p$. 

\end{abstract}
\section{Introduction}
The Bourgain-Tzafriri restricted invertibility principle \cite{BT87,BT91} is a fundamental fact about the existence of well conditioned restrictions of a linear operator between finite dimensional spaces. Recall that given a linear operator on $\mathbb{C}^n$, the stable rank, which we denote $\operatorname{srank}(T)$ is defined to be the quantity,
\[\operatorname{srank}(T) = \dfrac{||T||^2_2}{||T||^2}.\]
Here, we use the operator algebraic convention of using $||T||$ to represent the operator norm, that is, the largest singular value of $T$ and $||T||_2$ to be the Frobenius or the Hilbert-Schmidt norm, $(\sum s_k(T)^2)^{1/2}$, where the $s_k$ are the singular values of $T$. The restricted invertibility principle says the following, 

\begin{theorem}[Bourgain-Tzafriri]
There are universal constants $0 < c< 1$ and $0 < d < 1$ such that for any $T \in M_n(\mathbb{C})$ such that $||Te_i|| = 1$, for $i \in [n]$, we can find a subset $\sigma \subset [n]$ such that 
\[|\sigma| > \dfrac{cn}{||T||^2} = c \,\operatorname{srank}(T)\] and such that 
\[\lambda_{\operatorname{min}}(TP_{\sigma}\mid_{P_{\sigma}\mathbb{C}^n})> d \]
\end{theorem}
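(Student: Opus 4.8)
The plan is to run the Marcus--Spielman--Srivastava scheme in the form announced in the abstract, with a \emph{random principal submatrix} playing the role that a random rank-one decomposition plays in the usual arguments. Set $M=T^{*}T$; this is positive semidefinite with $M_{ii}=\|Te_{i}\|^{2}=1$, so $\operatorname{tr}M=n$ and $\|M\|=\|T\|^{2}$. For $\sigma\subseteq[n]$ write $M_{\sigma}$ for the principal submatrix of $M$ on $\sigma$; since the nonzero spectrum of $M_{\sigma}$ equals that of $\sum_{i\in\sigma}(Te_{i})(Te_{i})^{*}$, it is enough to produce, with $k:=\lfloor c\,\operatorname{srank}T\rfloor+1=\lfloor cn/\|T\|^{2}\rfloor+1$, a set $\sigma$ with $|\sigma|=k$ and $\lambda_{\min}(M_{\sigma})>d$.

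The first ingredient is the ``expected characteristic polynomial'' identity: averaging over the uniform measure on $k$-subsets,
\[
\frac{1}{\binom{n}{k}}\sum_{|\sigma|=k}\det(xI_{k}-M_{\sigma})=\frac{k!}{n!}\,\chi_{M}^{(n-k)}(x),
\]
which follows by induction from Jacobi's formula $\tfrac{d}{dx}\det(xI-A)=\sum_{i}\det\big((xI-A)_{\widehat{\imath}}\big)$. So the ``average'' principal $k\times k$ submatrix has characteristic polynomial proportional to the $(n-k)$-th derivative of $\chi_{M}$, all of whose roots lie, by Rolle's theorem, in $[\lambda_{\min}(M),\lambda_{\max}(M)]\subseteq[0,\|T\|^{2}]$. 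The second ingredient is the method of interlacing polynomials, run over the binary tree of decisions ``is $i\in\sigma$?''. The one point that is genuinely new relative to the rank-one setting is the common interlacing of sibling nodes; it is furnished by the real stability of the determinantal polynomial $\det\big(xI+\sum_{i}y_{i}(Te_{i})(Te_{i})^{*}\big)$ (a determinant of a nonnegative combination of positive semidefinite matrices, hence real stable and multiaffine in $(x,y_{1},\dots,y_{n})$) together with the strong Rayleigh property of the uniform measure on $k$-subsets. Consequently $\{\det(xI_{k}-M_{\sigma})\}_{|\sigma|=k}$ is an interlacing family, so some leaf $\sigma^{*}$, $|\sigma^{*}|=k$, has $\lambda_{\min}(M_{\sigma^{*}})$ at least as large as the smallest root of $\chi_{M}^{(n-k)}$.

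It remains to push that smallest root away from $0$, and this quantitative estimate is the heart of the argument --- and the step I expect to be the main obstacle. I would track the lower barrier $\Phi_{\ell}(p)=-p'(\ell)/p(\ell)=\sum_{j}(\lambda_{j}-\ell)^{-1}$ at a point $\ell$ below all roots, exactly as in Spielman--Srivastava's elementary proof of restricted invertibility. One starts from $p_{0}=\chi_{M}$ at an initial location $\ell_{0}<0$ of order $-\|T\|^{2}$: convexity of $z\mapsto(z-\ell_{0})^{-1}$ on $(\ell_{0},\infty)$, together with $\operatorname{tr}M=n$ and $\|M\|=\|T\|^{2}$, bounds $\Phi_{\ell_{0}}(p_{0})$ by its value at the extreme spectrum with mass as concentrated as possible at $\|T\|^{2}$ and $0$. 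One then differentiates $n-k$ times, at each step advancing $\ell$ by the largest amount for which $\Phi$ does not increase, and finally optimizes over $\ell_{0}$. Controlling $\Phi$ through all $n-k$ differentiations and checking that for $k=\lfloor c\,\operatorname{srank}T\rfloor+1$ the terminal location of $\ell$ stays above a fixed positive constant (one should get essentially $(1-\sqrt{c})^{2}$) is the real work; the tightness of this analysis, governed by extremal Jacobi-type configurations of the spectrum, is also what produces the sharp ``modified stable rank'' range and the quantitative Gauss--Lucas area bound $4(c-c^{2})$ advertised in the abstract. For the statement as phrased one only needs some admissible pair, and taking $c=\tfrac12$ already yields universal constants $0<c<1$ and $0<d<1$.
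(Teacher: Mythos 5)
Your proposal follows the same architecture as the paper: pass to $M=T^{*}T$, identify the averaged characteristic polynomial of the $k\times k$ principal submatrices with $\frac{k!}{n!}\chi_{M}^{(n-k)}$ (the iterated form of Thompson's identity, Theorem \ref{RCT}), use an interlacing argument to select one submatrix at least as good as the average, and then control the extreme root of $\chi_{M}^{(n-k)}$ by a barrier argument. However, the two steps that carry the content are asserted rather than proven. For the interlacing step you appeal to real stability of $\det\bigl(xI+\sum_{i}y_{i}(Te_{i})(Te_{i})^{*}\bigr)$ together with the strong Rayleigh property of the uniform measure on $k$-subsets, run over the binary tree of membership decisions; nothing in your note establishes that the conditional averages along that tree have common interlacers, and this is genuinely nontrivial machinery. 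The paper avoids it entirely: by Cauchy interlacing the $n$ polynomials $\chi[A_{j}]$ of the defect-one principal submatrices have $\chi[A]$ as a common interlacer, common interlacing is preserved under differentiation, and Thompson's identity then gives, for each $m$, an index $j$ with $\operatorname{max\,root}\chi^{(m)}[A_{j}]\leq\operatorname{max\,root}\chi^{(m+1)}[A]$ (Lemma \ref{ind}); deleting one row and column at a time and iterating yields Theorem \ref{inter high}, and the selection is a simple greedy algorithm.

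The more serious gap is that you explicitly defer the quantitative barrier analysis ("the real work"), so no admissible $d>0$ is ever produced --- yet the existence of universal constants $c,d$ is exactly what the theorem asserts; showing that some submatrix beats the average polynomial only proves the soft part. The paper does carry this computation out: Proposition \ref{barmove} shows that one differentiation lets the barrier advance by $1/\varphi$ without increasing the potential, Lemma \ref{HM} bounds the potential under the constraint that the spectrum lies in a bounded interval with prescribed mean (the extremal two-point configuration you describe), and the optimization in Theorem \ref{mrr} gives the explicit bound $\bigl(\sqrt{(1-\alpha)(1-c)}+\sqrt{\alpha c}\bigr)^{2}$, which after the flip $A\mapsto I-A$ yields Theorem \ref{BT}, $\lambda_{\min}(A_{S})\geq\operatorname{tr}(A)\bigl(\sqrt{1-c}-\sqrt{\delta-c}\bigr)^{2}$; specializing recovers the Bourgain--Tzafriri constants (and your guess $(1-\sqrt{c})^{2}$ is indeed what emerges in that normalization). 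To turn your sketch into a proof you must verify the potential-decrease inequality at each of the $n-k$ differentiations, bound the initial potential using $\operatorname{tr}M=n$ and $0\leq M\leq\|T\|^{2}I$, and optimize over the starting barrier; as written, the heart of the theorem is missing.
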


Spielman and Srivastava  \cite{SSEle} gave a remarkable proof of this theorem in 2012, that had the triple merits of being transparent, constructive and providing tight constants.  They showed that one may take $c = \epsilon^2$ and $d = (1-\epsilon)$. There is a related theorem of Bourgain and Tzafriri \cite{BT91} which says,
\begin{theorem}\label{Thm2}
There are universal constants $c$ and $\epsilon$ such that for any $T \in M_n(\mathbb{C})$ with $\Delta(T) = 0$ and $||T|| \leq 1$, there is a subset $\sigma \subset [n]$ such that 
\[|\sigma| > cn,\]
and
\[||P_{\sigma} T P_{\sigma}|| < \epsilon.\]
\end{theorem}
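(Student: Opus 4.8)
The plan is to deduce Theorem~\ref{Thm2} from the restricted invertibility principle stated above, used in the sharp Spielman--Srivastava form $c=\epsilon^2$, $d=1-\epsilon$ (any version with a fixed positive lower constant $d$ would in fact do, with one extra round of iteration if $d$ is small). The deduction has two stages: a reduction to the case $T=T^*$, and then, for a zero-diagonal Hermitian contraction, a \emph{nested double application} of restricted invertibility --- once to $I+T$, to secure a large coordinate subset on which $P_\sigma TP_\sigma$ is bounded below, and once more, \emph{inside that subset}, to $I-P_\sigma TP_\sigma$, to obtain the matching upper bound.

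\textbf{Reduction to the Hermitian case.} Write $T=T_1+iT_2$ with $T_1=\tfrac12(T+T^*)$ and $T_2=\tfrac1{2i}(T-T^*)$; both $T_j$ are Hermitian, both have zero diagonal since $\Delta(T)=0$, and $\|T_j\|\le\|T\|\le1$. Granting the Hermitian case with constants $c_0,\epsilon_0$, apply it to $T_1$ to get $\sigma_1$ with $|\sigma_1|>c_0n$ and $\|P_{\sigma_1}T_1P_{\sigma_1}\|<\epsilon_0$; then apply it to $P_{\sigma_1}T_2P_{\sigma_1}$, viewed as a zero-diagonal Hermitian contraction on $P_{\sigma_1}\mathbb{C}^n\cong\mathbb{C}^{|\sigma_1|}$, to get $\sigma\subseteq\sigma_1$ with $|\sigma|>c_0|\sigma_1|>c_0^2n$ and $\|P_\sigma T_2P_\sigma\|<\epsilon_0$. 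Since passing to a principal submatrix cannot increase the operator norm of a Hermitian matrix, $\|P_\sigma T_1P_\sigma\|\le\|P_{\sigma_1}T_1P_{\sigma_1}\|<\epsilon_0$, so $\|P_\sigma TP_\sigma\|\le\|P_\sigma T_1P_\sigma\|+\|P_\sigma T_2P_\sigma\|<2\epsilon_0$. Relabelling the constants gives the theorem for arbitrary $T$.

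\textbf{The Hermitian case.} Let $T=T^*$ act on $W$ with $\dim W=m$, $\Delta(T)=0$, $\|T\|\le1$. Then $A:=I_W+T$ is positive semidefinite with all-ones diagonal and $\|A\|\le2$, and $B:=A^{1/2}$ has unit columns $b_i=A^{1/2}e_i$ (since $\|b_i\|^2=A_{ii}=1$) with $\operatorname{srank}(B)=\operatorname{tr}(A)/\|A\|=m/\|A\|\ge m/2$. Restricted invertibility applied to $B$ with parameter $\epsilon$ gives $\sigma$ with $|\sigma|>\epsilon^2\operatorname{srank}(B)\ge\epsilon^2m/2$ and $P_\sigma B^*BP_\sigma\succeq(1-\epsilon)P_\sigma$, i.e. $P_\sigma TP_\sigma\succeq-\epsilon P_\sigma$. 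Set $S:=P_\sigma TP_\sigma$ on $V:=P_\sigma\mathbb{C}^m$: it is Hermitian with zero diagonal and $\|S\|\le\|T\|\le1$, so $A':=I_V-S$ is positive semidefinite with all-ones diagonal and $\|A'\|\le2$. Restricted invertibility applied to $B':=(A')^{1/2}$ with parameter $\epsilon$ gives $\tau\subseteq\sigma$ with $|\tau|>\epsilon^2\operatorname{srank}(B')\ge\epsilon^2|\sigma|/2\ge\epsilon^4m/4$ and $P_\tau A'P_\tau\succeq(1-\epsilon)P_\tau$, that is, $P_\tau TP_\tau=P_\tau SP_\tau\preceq\epsilon P_\tau$. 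As $\tau\subseteq\sigma$, restricting the first bound also gives $P_\tau TP_\tau\succeq-\epsilon P_\tau$, so $\|P_\tau TP_\tau\|\le\epsilon$; fixing $\epsilon$ small this is the Hermitian case with $c_0=\epsilon^4/4$, $\epsilon_0=\epsilon$, which, combined with the reduction above, completes the proof.

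\textbf{The main obstacle.} The essential difficulty is that restricted invertibility is intrinsically \emph{one-sided}: it produces a subset on which a Gram matrix is bounded \emph{below}, whereas Theorem~\ref{Thm2} wants a subset on which $P_\sigma TP_\sigma$ is small in norm --- pinched from both sides. The nested double application bridges this, and the points needing care are that the standing hypotheses (zero diagonal, contractivity) \emph{and} the lower bound already obtained on $\sigma$ are inherited by principal submatrices, and that each of the four uses of restricted invertibility is applied to a matrix $I\pm S$ with $\|S\|\le1$ and all-ones diagonal, whose square root has unit columns and stable rank at least half the dimension --- so the ambient dimension contracts by only a bounded factor at each step and the final set has size a fixed fraction of $n$. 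A route truer to this paper's philosophy would bypass the two-step device and prove the two-sided estimate directly, feeding a suitable principal compression of $I+T$ into the principal-submatrix machinery developed above and controlling the extreme eigenvalues of $P_\sigma(I+T)P_\sigma$ simultaneously; I expect carrying two barrier quantities in tandem there to be the technical heart.
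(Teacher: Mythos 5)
Your proof is correct, and it takes the route the paper explicitly flags in the introduction as the classical deduction of Theorem~\ref{Thm2} from the Spielman--Srivastava restricted invertibility theorem (``Spielman and Srivastava's constructive proof \dots can be applied to give a proof of theorem~\ref{Thm2}\dots The one issue with this approach is that the dependance of $c$ on $\epsilon$ is suboptimal, yielding $c = O(\epsilon^4)$''). The paper itself does not carry out that deduction; its own contribution to the Hermitian, zero-diagonal case is Theorem~\ref{zd2}, proved by the principal-submatrix/interlacing machinery: Theorem~\ref{zd1} controls the largest eigenvalue of a $cn$-sized principal submatrix using the fact that $\sum_k \chi[A_k]=\chi'[A]$ together with the barrier bound $\operatorname{max root}\,\chi^{(n-m)}[A]$, and then the same theorem is applied once more to $-A_{\sigma_1}$ inside $\sigma_1$ to control the smallest eigenvalue, yielding $|\sigma|=c^2n$ and $\|A_\sigma\|\le 2\sqrt{c-c^2}$. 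Structurally this is the same two-pass ``pinch the max, then restrict and pinch the min'' iteration you run, and both lose a quadratic factor in the set size for exactly the same reason (controlling the other end of the spectrum costs a second pass). The difference is in the engine: you invoke the RI theorem applied to the unit-column matrices $(I\pm S)^{1/2}$, so the conclusion passes through Gram matrices and column subset selection; the paper stays entirely with eigenvalue interlacing of principal submatrices and derivative polynomials, giving explicit spectral constants without ever forming a square root. Your reduction of the general $T$ to the Hermitian case via $T=T_1+iT_2$, applying the Hermitian result to $T_1$ and then to $P_{\sigma_1}T_2P_{\sigma_1}$ and using monotonicity of the operator norm under compression, is standard and correct; the paper implicitly restricts to Hermitian matrices in all of its quantitative theorems and never spells this step out. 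One small inaccuracy worth noting: the paper's statement of RI gives the smallest singular value of $BP_\sigma\mid_{P_\sigma\mathbb{C}^m}$ at least $1-\epsilon$, which translates to $P_\sigma B^*BP_\sigma\succeq(1-\epsilon)^2P_\sigma$ rather than $(1-\epsilon)P_\sigma$ as you wrote; this changes nothing qualitatively since $(1-\epsilon)^2\ge 1-2\epsilon$ and you only need universal constants. Your closing speculation that a single-pass, two-barrier argument would better match the paper's philosophy is exactly right --- the paper's remark about its companion work on Anderson's paving indicates this is where that refinement lives.
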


Assaf Naor asked in 2010 \cite{NaorBourbaki} if there was a constructive solution to this problem, parallel to the Spielman-Srivastava result for restricted invertibility. Pierre Youssef answered this question in 2012 \cite{YouCSIMRN} and provided constants, for hermitian matrices, of $c = \left[\dfrac{(\sqrt{2}-1)^2}{\sqrt{2}}\epsilon \right]^2 \approx 0.015 \,\epsilon^2$  for any $\epsilon < 1$. 

These theorems are subsumed by the main theorem in the work of Marcus, Spielman and Srivastava \cite{MSS2} that yields the solution to the Kadison-Singer problem as a corollary. Remarkably, the constants in the solution to the Kadison-Singer problem are nearly identical to the constants in the restricted invertibility principle. The MSS theorem however, is non-constructive and finding a constructive way of constructing the partitions it guarantees is currently, a major open problem. Marcus, Spielman and Srivasta, in the third of their series of papers on interlacing polynomials \cite{MSS3}, prove improved restricted invertibility estimates, see remark(\ref{MSSIRI}) for a discussion. Our results are similar to theirs but we hope that this slightly different approach will prove useful in other settings as well. Assaf Naor and Pierre Youssef in their recent paper \cite{NaoYou} also proved improved estimates for restricted invertibility, using a variety of deep tools from geometric functional analysis. Their estimates are however of a slightly different flavour from those of MSS in \cite{MSS3}. 

Spielman and Srivastava's constructive proof of the restricted invertibility principle \cite{SSEle} can be applied to give a proof of theorem \ref{Thm2}. One can conclude that in the case that the matrix $T$ in question is additionally,  hermitian, that one can effectively find a large subset $\sigma$ such that $P_{\sigma}TP_{\sigma} < \epsilon\,I$. Repeating the argument for this submatrix, but instead bounding the smallest eigenvalue, one can get norm bounds for principal submatrices of hermitian matrices. The one issue with this approach is that the dependance of $c$ on $\epsilon$ is suboptimal, yielding $c = O(\epsilon^4)$ rather than the theoretically optimal $c = O(\epsilon^2)$. 

The paper of Marcus, Spielman and Srivastava, \cite{MSS2}, see also \cite{MSSICM}, where they solve the Kadison-Singer problem clarifies further, the earlier proof of the restricted invertilibility theorem due Spielman and Srivastava. They show how this earlier proof can be understood in the framework of their method of interlacing polynomials. This method reduces the study of restricted invertilibilty to studying how the operation on real rooted polynomials $f \longrightarrow \left(1-\dfrac{d}{dx}\right)f$ affects the smallest strictly positive root. Another remarkable method, the so called barrier method, also due to Marcus, Spielman and Srivastava can be effectively used for this last problem. 

The key observation in this paper is that the method of interlacing polynomials can be directly applied to hermitian matrices and their principal submatrices. Let $A$ be a hermitian matrix in $M_n(\mathbb{C})$ and let $A_k$, for $k \in [n]$ be the principal submatrix derived by deleting the $k$'th row and column of $A$. 

The celebrated Cauchy interacing principle says that the eigenvalues of $A$ and $A_k$ interlace for each $k \in [n]$. Alternately, the characteristic polynomials $\chi[A_k]$ have $\chi[A]$ as a common interlacer. It is a lovely fact, first observed by R.C.Thompson \cite{ThoPS1} that 
\[\sum_{k\in[n]} \chi[A_k] = \chi'[A].\]

This observation allows us to immediately use the machinery of interlacing polynomials of MSS to detect principal submatrices whose largest eigenvalue is small. Given $S \subset [n]$, Let $A_S$ denote the principal submatrix of $A$ created by removing the rows and columns corresponding to elements in $S$. The first theorem in this paper is, 
\begin{theorem} Given a hermitian matrix $A \in M_n(\mathbb{C})$ and $k \leq n$, one can always find a subset $S \subset [n]$ with $|S| =k$ such that the principal submatrix $A_S$ satisfies
\begin{eqnarray}\label{interlacing}
\lambda_{max}(A_S) \leq \operatorname{max root}\,\chi^{(n-k)}[A].
\end{eqnarray}
\end{theorem}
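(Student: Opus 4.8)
The plan is to use the method of interlacing polynomials exactly as in Marcus-Spielman-Srivastava, but applied to the family of characteristic polynomials of principal submatrices rather than to rank-one updates. The crucial structural input, already quoted in the excerpt, is Thompson's identity $\sum_{k\in[n]}\chi[A_k]=\chi'[A]$, together with the Cauchy interlacing principle, which says $\chi[A]$ is a common interlacer for the polynomials $\{\chi[A_k]\}_{k\in[n]}$.

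\medskip

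First I would set up the induction. For a subset $S\subset[n]$ of size $j$, the principal submatrices $A_{S\cup\{k\}}$, as $k$ ranges over $[n]\setminus S$, are the principal submatrices of $A_S$ obtained by deleting one further index; hence by Cauchy interlacing they have $\chi[A_S]$ as a common interlacer, and by Thompson's identity applied to $A_S$ we get $\sum_{k\in[n]\setminus S}\chi[A_{S\cup\{k\}}]=\chi'[A_S]$. The standard interlacing-families lemma of MSS then applies: since the polynomials $\chi[A_{S\cup\{k\}}]$ share a common interlacer, their sum $\chi'[A_S]$ is real-rooted, and there exists at least one index $k$ for which $\lambda_{max}(A_{S\cup\{k\}})=\operatorname{max root}\chi[A_{S\cup\{k\}}]\le \operatorname{max root}\chi'[A_S]$. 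Iterating this choice $n-k$... rather, performing this selection step $k$ times, one builds up $S$ of size $k$ with $\lambda_{max}(A_S)\le \operatorname{max root}(\chi')^{(k)}[A]$, where the $j$-th application of the derivative comes from the $j$-th selection step. Since $(\chi')^{(j)}[A]=\chi^{(j+1)}[A]$, after $k$ steps we land at $\chi^{(k+1)}[A]$; one checks the bookkeeping so that it comes out to $\chi^{(n-k)}[A]$ for a submatrix of the stated size — the degree of $\chi[A_S]$ is $n-k$ when $|S|=k$, and the derivative count must match the dimension drop, so the exponent in \eqref{interlacing} is correct once indices are tracked carefully.

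\medskip

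The one point requiring genuine care, rather than routine verification, is the monotonicity of the max-root under the derivative: I need that $\operatorname{max root}\chi'[A_S]$ does not exceed something controlled, and more precisely that at each step the bound propagates, i.e. that if $\operatorname{max root}\chi[A_S]\le t$ then $\operatorname{max root}\chi'[A_S]\le t$ as well — this is immediate from Rolle's theorem for real-rooted polynomials, since the largest root of the derivative lies between the two largest roots of the polynomial (or equals a repeated largest root). Combined with the observation that $\operatorname{max root}$ is monotone along the chain $A\supset A_S$ in the sense dictated by interlacing, the telescoping goes through. So the main obstacle is purely combinatorial/notational: correctly aligning the number of derivative operations with the size of $S$ and the drop in matrix dimension so that the final expression reads $\chi^{(n-k)}[A]$; the analytic content is entirely supplied by the MSS interlacing-families machinery plus Thompson's identity, both of which are available to us.
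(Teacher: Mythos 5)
Your single-step lemma is fine and is exactly the paper's starting point: by Cauchy interlacing the one-index-smaller submatrices of $A_S$ have $\chi[A_S]$ as a common interlacer, Thompson's identity gives $\sum_j \chi[A_{S\cup\{j\}}]=\chi'[A_S]$, and the MSS argument produces one index with $\lambda_{max}(A_{S\cup\{j\}})\leq \operatorname{max root}\,\chi'[A_S]$. The gap is in the iteration. From this step you only control $\operatorname{max root}\,\chi[A_{S\cup\{j\}}]$, and your proposed telescoping via Rolle goes the wrong way: Rolle gives $\operatorname{max root}\,\chi'[B]\leq \operatorname{max root}\,\chi[B]$, so from $\operatorname{max root}\,\chi[A_{j_1}]\leq \operatorname{max root}\,\chi'[A]$ you can only conclude $\operatorname{max root}\,\chi'[A_{j_1}]\leq \operatorname{max root}\,\chi'[A]$, \emph{not} $\leq \operatorname{max root}\,\chi''[A]$. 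Consequently, repeating your selection step yields in the end only $\lambda_{max}(A_S)\leq \operatorname{max root}\,\chi'[A]$ — a single derivative, no matter how many rows and columns you delete. The derivatives never accumulate, so the claimed bound $\operatorname{max root}\,\chi^{(n-k)}[A]$ does not follow; this is a missing idea, not bookkeeping.

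What is needed (and what the paper does) is a one-step lemma at \emph{every} derivative order: since the property of having a common interlacer is preserved under differentiation, the polynomials $\chi^{(m)}[A_j]$, $j\in[n]$, still have a common interlacer, and differentiating Thompson's identity $m$ times gives $\sum_j \chi^{(m)}[A_j]=\chi^{(m+1)}[A]$; hence for each $m$ there is a $j$ with $\operatorname{max root}\,\chi^{(m)}[A_j]\leq \operatorname{max root}\,\chi^{(m+1)}[A]$. One then iterates with the derivative order decreasing by one at each deletion: at the first deletion choose the submatrix according to the max root of $\chi^{(n-k-1)}$ of the candidates, at the next step according to $\chi^{(n-k-2)}$, and so on, so that the left side of each step matches the right side of the next and the chain telescopes to $\lambda_{max}(A_S)\leq \operatorname{max root}\,\chi^{(n-k)}[A]$ when the derivative order reaches zero. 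In particular the selection criterion at intermediate steps is the max root of a higher derivative of the candidate's characteristic polynomial, not its largest eigenvalue as in your scheme; the index achieving the inequality can differ from one derivative order to another, which is precisely why your order-zero selection cannot be patched by monotonicity. (Separately, fix the set conventions: deleting down to a size-$k$ submatrix takes $n-k$ selection steps, not $k$.)
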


Remarkably, this quantity can be well controlled. This might a priori seem intractable as polynomial $\chi[A]$ could be any real rooted polynomial at all. However, the barrier method of Batson, Spielman and Srivastava \cite{BSS} naturally applies to the problem of bounding the roots of the higher derivatives of a polynomial. With very little effort, one can use their technique to prove the following result which seems to be known and was mentioned by Adam Marcus in a talk in 2014,

\begin{theorem}[Marcus, 2014]
Let $p$ be a real rooted polynomial with roots all less than $1$. Then, for any $k \leq n$, we have
\[\operatorname{max root} p^{(k)} \leq \operatorname{inf}_{b \geq 1} \,\,b - \dfrac{k}{\varphi(b)},\qquad \text{where } \varphi(b) := \sum \dfrac{1}{b - \lambda_i},\]
and where the $\lambda_i$ are the roots of $p$. 
\end{theorem}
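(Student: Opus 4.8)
The plan is to run the barrier method of Batson, Spielman and Srivastava \cite{BSS}, with the rank-one updates of that paper replaced by differentiation. For a non-constant real-rooted polynomial $q$ with all roots strictly below a real number $c$, write $\Phi_q(c) := q'(c)/q(c) = \sum_j 1/(c-\mu_j)$, the sum running over the roots $\mu_j$ of $q$; this quantity is positive, strictly decreasing and convex in $c$ on $(\operatorname{max\,root} q,\infty)$, and since all roots of $p$ are less than $1$ the function $\varphi(b)=\Phi_p(b)$ is defined and positive for every $b\ge 1$. The case $k=n$ is trivial (then $p^{(n)}$ is a nonzero constant), so assume $k<n$, in which case $p,p',\dots,p^{(k)}$ are all non-constant and real-rooted.

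The engine will be a one-step shift lemma: \emph{if $q$ is non-constant and real-rooted with all roots $<c$, then $q'$ has all roots $<c^{+}:=c-1/\Phi_q(c)$ and $\Phi_{q'}(c^{+})\le\Phi_q(c)$.} Granting this, I iterate: set $b_0:=b$ and $b_j:=b_{j-1}-1/\Phi_{p^{(j-1)}}(b_{j-1})$ for $j=1,\dots,k$. By the lemma, at each step $p^{(j)}$ has all roots $<b_j$ and $\Phi_{p^{(j)}}(b_j)\le\Phi_{p^{(j-1)}}(b_{j-1})$; chaining these, $\Phi_{p^{(j-1)}}(b_{j-1})\le\varphi(b)$ for every $j$, so each decrement $1/\Phi_{p^{(j-1)}}(b_{j-1})$ is at least $1/\varphi(b)$. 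Hence $\operatorname{max\,root} p^{(k)}<b_k=b-\sum_{j=1}^{k}1/\Phi_{p^{(j-1)}}(b_{j-1})\le b-k/\varphi(b)$, and taking the infimum over $b\ge1$ gives the theorem.

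To prove the lemma, the root bound is soft: for $x>M:=\operatorname{max\,root} q$ one has $q'(x)=q(x)\sum_i 1/(x-\mu_i)\ne0$, while $\Phi_q(c)\ge 1/(c-M)$ forces $c^{+}\ge M$, and a short case check upgrades this to $\operatorname{max\,root} q'<c^{+}$. For the barrier bound, let $\mu_1,\dots,\mu_d$ be the roots of $q$ and use the identities $q'(x)/q(x)=\sum_i 1/(x-\mu_i)$ and $q''(x)/q(x)=\big(\sum_i 1/(x-\mu_i)\big)^{2}-\sum_i 1/(x-\mu_i)^{2}$ to write $\Phi_{q'}(c^{+})=q''(c^{+})/q'(c^{+})=\big(\sum_{i\ne j}a_ia_j\big)\big/\sum_i a_i$, where $a_i:=1/(c^{+}-\mu_i)>0$. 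Dividing the target inequality by $\Phi_q(c)$ and introducing the weights $s_i:=\big(1/(c-\mu_i)\big)/\Phi_q(c)$, which are positive and sum to $1$, a short computation shows $a_i=\Phi_q(c)\,s_i/(1-s_i)$ and reduces $\Phi_{q'}(c^{+})\le\Phi_q(c)$ to
\[
\Big(\sum_i\frac{s_i}{1-s_i}\Big)^{2}\;\le\;\sum_i\frac{s_i}{(1-s_i)^{2}},
\]
which follows at once from Cauchy--Schwarz applied to $(\sqrt{s_i})_i$ and $(\sqrt{s_i}/(1-s_i))_i$ using $\sum_i s_i=1$.

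The only real content is the barrier non-increase, and the trap to avoid is that the crude bound on the change of $\Phi$ across the shift — length of the shift times the maximum of $|\Phi'|$ — points the wrong way and is too lossy; one genuinely has to pass to the weighted Cauchy--Schwarz form above. Equality there occurs exactly when all $s_i$ coincide, i.e. when $p(x)=(x-\mu)^{n}$, which simultaneously shows the bound of the theorem is sharp. The degenerate steps (a factor $q$ of degree $\le1$, or repeated roots) only make every inequality easier and are disposed of by inspection.
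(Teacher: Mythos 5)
Your proof is correct and follows essentially the same plan as the paper's: the Batson--Spielman--Srivastava barrier-function argument, reduced to a one-step lemma that shifting the barrier by $1/\varphi$ does not increase the potential, with the key inequality proved by Cauchy--Schwarz and then iterated. The only cosmetic difference is how the one-step inequality is organized — you compute $\Phi_{q'}$ at the shifted barrier directly via the weights $s_i$, whereas the paper packages the same Cauchy--Schwarz estimate as concavity of $1/\Phi_p$.
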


Optimizing the above is routine. We mention here one consequence, that I personally find absolutely marvellous. Given any real rooted polynomial $p$ of degree $n$ with all roots lying in $[-1,1]$ and the average of the roots $0$ and any $c \geq \dfrac{1}{2}$, we have that 
\[\operatorname{roots}(p^{(cn)}) \subset [-2\sqrt{c-c^{2}}, 2\sqrt{c-c^{2}}].\] 

 The necessity of having to take $c \geq \dfrac{1}{2}$ can be understood by looking at the polynomial $[(x+1)(x-1)]^{n/2}$. Taking the $\dfrac{n}{2} -1$'th derivative still leaves a root at $1$ and that one needs to take further derivatives to push the largest root inward. Combining this estimate with (\ref{interlacing}), we conclude the following,

\begin{theorem}\label{zd1}
Let $A \in M_n(\mathbb{C})$ be hermitian and such that  $\operatorname{Tr}(A) = 0$. Then, for any $c \geq \dfrac{1}{2}$, there is a subset $\sigma \subset [n]$ such that 
\[|\sigma| = cn,\]
and
\[\lambda_{max} (A_{\sigma}) \leq 2\sqrt{c-c^{2}}.\]
\end{theorem}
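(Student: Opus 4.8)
The plan is to read Theorem~\ref{zd1} off the two ingredients already assembled above, run in sequence. First, since $A$ is hermitian, $p:=\chi[A]$ is real rooted of degree $n$, its roots are exactly the eigenvalues of $A$, and the hypothesis $\operatorname{Tr}(A)=0$ says precisely that these roots average to $0$. (I also use that they lie in $[-1,1]$, i.e. $\|A\|\le 1$, equivalently $-I\preceq A\preceq I$; if one does not wish to impose this, rescaling $A$ simply turns the conclusion into $\lambda_{\max}(A_\sigma)\le 2\|A\|\sqrt{c-c^2}$.) Next, set $k=cn$, which I take to be an integer (otherwise round $cn$ up throughout), and apply the interlacing estimate (\ref{interlacing}) with $k$ deleted rows and columns; the order of differentiation is then also $k$, as dictated by Thompson's identity $\sum_j\chi[A_j]=\chi'[A]$ iterated, which expresses $\chi^{(k)}[A]$ as a scalar multiple of the average of the characteristic polynomials of the $(n-k)$-dimensional principal submatrices of $A$. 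This produces a set $\sigma\subset[n]$ with $|\sigma|=cn$ such that the principal submatrix $A_\sigma$, of dimension $(1-c)n$, satisfies
\[
\lambda_{\max}(A_\sigma)\ \le\ \operatorname{max root}\,\chi^{(cn)}[A].
\]

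It then remains to bound the right-hand side by $2\sqrt{c-c^2}$, and this is exactly the quantitative Gauss--Lucas statement recorded above, applied to $p=\chi[A]$: a real rooted degree-$n$ polynomial with all roots in $[-1,1]$ and root-average $0$ has $\operatorname{roots}(p^{(cn)})\subset[-2\sqrt{c-c^2},\,2\sqrt{c-c^2}]$ for every $c\ge\tfrac12$, so in particular $\operatorname{max root}\,\chi^{(cn)}[A]\le 2\sqrt{c-c^2}$. For completeness, here is the optimization that pulls this out of Marcus's theorem: choose the barrier point $b_0=\sqrt{c/(1-c)}$, which lies in the admissible range $b\ge1$ precisely because $c\ge\tfrac12$, and use the convexity of $t\mapsto 1/(b_0-t)$ on $[-1,1]$ together with $\sum\lambda_i=0$ to get $\varphi(b_0)=\sum\tfrac{1}{b_0-\lambda_i}\le\tfrac n2\bigl(\tfrac{1}{b_0-1}+\tfrac{1}{b_0+1}\bigr)=\tfrac{nb_0}{b_0^2-1}$; substituting into $\operatorname{max root}\,\chi^{(cn)}[A]\le b_0-cn/\varphi(b_0)$ and simplifying gives $(1-c)b_0+c/b_0=2\sqrt{c-c^2}$. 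Chaining this with the display above yields $\lambda_{\max}(A_\sigma)\le 2\sqrt{c-c^2}$, which is Theorem~\ref{zd1}.

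Granted the interlacing estimate (\ref{interlacing}) and Marcus's bound, the combination is therefore short, and the only places that genuinely need care are (i) the dimension bookkeeping — here $\sigma$ is the set of $cn$ \emph{deleted} coordinates, the surviving submatrix has dimension $(1-c)n$, and one differentiates $\chi[A]$ exactly $cn$ times — and (ii) the one-variable optimization of the barrier location, where the constraint $b\ge1$ is exactly what forces $b_0=\sqrt{c/(1-c)}\ge1$ and hence $c\ge\tfrac12$; the polynomial $[(x+1)(x-1)]^{n/2}$ shows this threshold is sharp. The real weight of the theorem sits upstream, inside (\ref{interlacing}) itself: identifying $\chi^{(cn)}[A]$ with the averaged characteristic polynomial of the size-$(1-c)n$ principal submatrices via Thompson's identity, and then running the method of interlacing families to extract a single submatrix whose largest eigenvalue meets the average bound. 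That interlacing-families step is the one I would expect to be the main obstacle were it not already available; everything downstream of it is essentially calculus.
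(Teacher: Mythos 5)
Your proof is correct and follows the paper's own route: you combine the interlacing machinery (Lemma~\ref{ind} iterated, i.e.\ Theorem~\ref{inter high}, which is what display~(\ref{interlacing}) records, with $\sigma$ being the $cn$ deleted indices) with the barrier bound of Proposition~\ref{bar}, and your choice $b_0=\sqrt{c/(1-c)}$ together with the spreading/convexity bound $\varphi(b_0)\le\tfrac n2\bigl(\tfrac1{b_0-1}+\tfrac1{b_0+1}\bigr)$ is exactly the $\alpha=\tfrac12$ specialization of Lemma~\ref{HM} and Theorem~\ref{mrr}, recorded as Theorem~\ref{rzerosh}. You were also right to flag that the statement silently assumes $\|A\|\le1$ (equivalently, roots of $\chi[A]$ in $[-1,1]$); without that normalization the conclusion would have to be rescaled as you indicate.
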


By considering a diagonal matrix with half the entries $1$ and the other hand $-1$, we see that the condition $c\geq \dfrac{1}{2}$ is necessary. Even if one forces the entire diagonal to be zero, one cannot escape this condition: The $2n \times 2n$ matrix $ \left( \begin{array}{cc}
0 & I\\
I & 0\end{array} \right)$ has zero diagonal but any submatrix of size $n+1$ has norm $1$. One might wonder if adding some other condition might allow us to push beyond this limit. A random GUE matrix(normalized to have norm $1$) has principal submatrices of size $cn$ with norm concentrated strongly around $\sqrt{c}$, after all. I don't know the answer to this : All estimates in this paper are spectral(they depend only on the eigenvalues of the matrices in question). It is natural to add other combinatorial constraints, say on the sizes of the matrix entries to derive refined estimates, but I do not do this in this paper.

By considering $-A$ instead, we see that one gets an analogous for the smallest root as well(thought the set selected could be different). Working with $A_{\sigma}$ now, we can iterate the above arguments to show that by passing to a smaller set, one can control both eigenvalues and hence the norm. The constants in this theorem are however suboptimal, off by a quadratic factor.

\begin{theorem}\label{zd2}
Let $T \in M_n(\mathbb{C})$ be hermitian such that  $\Delta(T) = 0$. Then, for any $c \leq \dfrac{1}{2}$, there is a subset $\sigma \subset [n]$ such that 
\[|\sigma| = c^2n,\]
and
\[||P_{\sigma} T P_{\sigma}|| \leq 2\sqrt{c-c^{2}}.\]
\end{theorem}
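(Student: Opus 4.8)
\emph{Proof proposal.} The plan is to derive this from Theorem~\ref{zd1} by applying that theorem twice — once to force the largest eigenvalue down and once to force the smallest eigenvalue up — and then checking that the two resulting compressions are compatible. Two elementary facts drive the iteration: a principal submatrix of a hermitian matrix with zero diagonal again has zero diagonal (hence zero trace), and passing to a further principal submatrix can only decrease the largest eigenvalue, by Cauchy interlacing.

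First I would normalize so that $\|T\| \le 1$; in general the right-hand side just picks up a factor $\|T\|$. Since $\Delta(T)=0$ we have $\operatorname{Tr}(T)=0$, so Theorem~\ref{zd1} applies to $T$. I would invoke it with parameter $1-c$, which lies in $[\tfrac12,1)$ precisely because $c \le \tfrac12$: this gives a set that the theorem deletes, whose complement $\rho_1 \subset [n]$ has $|\rho_1| = n-(1-c)n = cn$ and satisfies
\[
\lambda_{\max}\big(P_{\rho_1}TP_{\rho_1}\big) \;\le\; 2\sqrt{(1-c)-(1-c)^2} \;=\; 2\sqrt{c-c^2}.
\]

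Next, write $B := P_{\rho_1}TP_{\rho_1}$, a hermitian matrix of size $cn$ with zero diagonal and $\|B\|\le\|T\|\le 1$. Applying Theorem~\ref{zd1} to $-B$, again with parameter $1-c$, produces $\rho_2 \subset \rho_1$ with $|\rho_2| = cn-(1-c)cn = c^2 n$ and $\lambda_{\max}(P_{\rho_2}(-T)P_{\rho_2}) \le 2\sqrt{c-c^2}$, i.e.\ $\lambda_{\min}(P_{\rho_2}TP_{\rho_2}) \ge -2\sqrt{c-c^2}$. Since $P_{\rho_2}TP_{\rho_2}$ is a principal submatrix of $B$, interlacing gives $\lambda_{\max}(P_{\rho_2}TP_{\rho_2}) \le \lambda_{\max}(B) \le 2\sqrt{c-c^2}$. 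Taking $\sigma := \rho_2$, we obtain $|\sigma| = c^2 n$ and, since the trace of $P_\sigma T P_\sigma$ vanishes, $\|P_\sigma T P_\sigma\| = \max\{\lambda_{\max},-\lambda_{\min}\} \le 2\sqrt{c-c^2}$.

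I do not expect a genuine obstacle once Theorem~\ref{zd1} is available; the substance is all in that theorem and the two stability facts above. The one forced choice is how to split the target density: if the first application retains a fraction $f_1$ of the current dimension and the second a fraction $f_2$, one needs $f_1f_2 = c^2$, each $f_i \le \tfrac12$ (for Theorem~\ref{zd1} to be applicable), and $2\sqrt{f_i-f_i^2}\le 2\sqrt{c-c^2}$; since $f\mapsto f-f^2$ is increasing on $[0,\tfrac12]$ and $c\le\tfrac12$, this forces $f_1=f_2=c$. That is exactly the ``quadratic loss'' (density $cn$ when controlling one eigenvalue, only $c^2n$ when controlling the norm) flagged in the paragraph preceding the statement; improving it would require controlling both ends of the spectrum simultaneously rather than sequentially, which this argument does not attempt.
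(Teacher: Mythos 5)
Your proof is correct and follows the same route the paper sketches in the paragraph preceding the theorem: apply Theorem~\ref{zd1} once to $T$ (with parameter $1-c$) to cap $\lambda_{\max}$, then apply it to $-B$ where $B$ is the resulting size-$cn$ compression (which still has zero diagonal) to cap $-\lambda_{\min}$, and finish with Cauchy interlacing; the normalization $\|T\|\le 1$ you insert is indeed needed, since Theorem~\ref{zd1} inherits it implicitly from Theorem~\ref{rzerosh}. The paper gives no separate proof of this theorem — only the sentence ``Working with $A_\sigma$ now, we can iterate the above arguments\ldots'' — so your write-up essentially fills in exactly that intended two-step iteration.
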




By constraining the matrix, one can get better estimates on how small the norm of a principal minor can be. The following theorem includes the case when at most a proportion of roots of a positive contraction are at $1$. In what follows, we use ``$\operatorname{tr}$'' to denote the normalized trace. 
\begin{theorem}\label{zd3}
Let $A \in M_n(\mathbb{C})$ be a positive contraction such that $\operatorname{tr}(A) = \alpha $. Then, for any $c \leq 1-\alpha$, there is a subset $\sigma \subset [n]$ such that 
\[|\sigma| = cn,\]
and
\[||A_{\sigma}|| \leq \left(\sqrt{(1-c)\alpha}+\sqrt{(c(1-\alpha)}\right)^2 .\]
\end{theorem}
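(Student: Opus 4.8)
The strategy is to run the two engines developed above in series. First I would invoke the interlacing bound (\ref{interlacing}) with $k = cn$: this produces a subset $\sigma \subset [n]$ with $|\sigma| = cn$ such that $\lambda_{\max}(A_\sigma) \leq \operatorname{max root}\,\chi^{((1-c)n)}[A]$. Since $A_\sigma$ is a principal submatrix of the positive contraction $A$, it is itself positive semidefinite with $A_\sigma \leq I$, so $\|A_\sigma\| = \lambda_{\max}(A_\sigma)$; it therefore suffices to bound the largest root of the $(1-c)n$-th derivative of $p := \chi[A]$, a real-rooted polynomial of degree $n$ whose roots are the eigenvalues $\lambda_1,\dots,\lambda_n \in [0,1]$ of $A$, with $\sum_i \lambda_i = \alpha n$ because $\operatorname{tr}$ is the normalized trace. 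Replacing $A$ by $(1-\delta)A + \delta\alpha I$ and letting $\delta \to 0$ changes neither $\alpha$ nor the asserted bound but forces all $\lambda_i < 1$, so the hypothesis of the bound of Marcus below is met; the boundary case $c = 1-\alpha$ is anyway trivial, the right-hand side being $1$ and $\|A_\sigma\| \leq 1$.

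Next I would feed $p$ and the derivative order $m = (1-c)n$ into the bound of Marcus stated above, getting
$$\operatorname{max root}\,\chi^{((1-c)n)}[A] \;\leq\; \inf_{b > 1}\left(b - \frac{(1-c)n}{\varphi(b)}\right), \qquad \varphi(b) = \sum_{i=1}^{n}\frac{1}{b - \lambda_i}.$$
The task is now to bound $\varphi(b)$ from above using only $\alpha$ and the constraint $\lambda_i \in [0,1]$. For each fixed $b > 1$ the function $x \mapsto 1/(b-x)$ is convex on $[0,1]$, so over the polytope $\{\lambda \in [0,1]^n : \sum_i \lambda_i = \alpha n\}$ the sum $\varphi(b)$ is maximised at a vertex, where the $\lambda_i$ are pushed to the endpoints: $\alpha n$ of them equal $1$ and the remaining $(1-\alpha)n$ equal $0$ (if $\alpha n \notin \mathbb{Z}$ a single fractional coordinate appears, which by convexity only lowers the value). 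Hence
$$\varphi(b) \;\leq\; \frac{\alpha n}{b - 1} + \frac{(1-\alpha)n}{b} \;=\; \frac{n\,\bigl(b - (1-\alpha)\bigr)}{b(b-1)},$$
and therefore $\operatorname{max root}\,\chi^{((1-c)n)}[A] \leq \inf_{b > 1}\bigl(b - (1-c)\,b(b-1)/(b-(1-\alpha))\bigr)$.

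Finally I would perform the one-variable optimisation. The substitution $u = b - (1-\alpha)$ rewrites the bracket as $c(1-\alpha) + \alpha(1-c) + \bigl(cu + (1-c)\alpha(1-\alpha)/u\bigr)$; by the AM--GM inequality the parenthesised term attains its minimum $2\sqrt{c(1-c)\alpha(1-\alpha)}$ at $u^\star = \sqrt{(1-c)\alpha(1-\alpha)/c}$, and summing the three pieces yields exactly $\bigl(\sqrt{(1-c)\alpha} + \sqrt{c(1-\alpha)}\bigr)^2$. The one thing to verify is that this minimiser is admissible, that is $b^\star = u^\star + (1-\alpha) \geq 1$; this is equivalent to $u^\star \geq \alpha$, hence to $(1-c)(1-\alpha) \geq c\alpha$, hence to $c \leq 1-\alpha$ --- precisely the hypothesis of the theorem, which is thereby explained. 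Chaining the displays gives $\|A_\sigma\| \leq \bigl(\sqrt{(1-c)\alpha} + \sqrt{c(1-\alpha)}\bigr)^2$, as desired.

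I do not anticipate a real obstacle: once (\ref{interlacing}) and the bound of Marcus are in hand, the proof is two clean reductions followed by a routine Lagrange/AM--GM computation. The points needing a little care are the convexity argument identifying the worst-case spectrum for $\varphi$ (together with the harmless rounding when $\alpha n$ is not an integer), the short limiting step that licenses applying the bound of Marcus when $A$ has eigenvalues equal to $1$, and checking that the optimal barrier $b$ satisfies $b \geq 1$ --- which is exactly where $c \leq 1-\alpha$ is used and which renders the role of that hypothesis transparent.
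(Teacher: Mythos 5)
Your proposal is correct and follows essentially the same route as the paper: the interlacing bound (Theorem \ref{inter high}) reduces the problem to the largest root of $\chi^{((1-c)n)}[A]$, the barrier estimate (Proposition \ref{bar}) together with the convexity bound of Lemma \ref{HM} controls that root, and the one-variable optimisation is the same as in Theorem \ref{mrr}, merely carried out via the substitution $u=b-(1-\alpha)$ and AM--GM rather than by locating the critical point. Your extra checks (the boundary case $c=1-\alpha$, admissibility $b^\star\geq 1$, and the perturbation when some eigenvalue equals $1$) are harmless and consistent with the paper's argument.
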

Note that the estimate is strictly smaller than $1$ for $c <1- \alpha$. 

The above analysis can be refined to take into account that the positive contraction $A$ in $M_n(\mathbb{C})$ may have plenty of eigenvalues situated away from $1$. A natural way of marking this is by looking at the following quantity, 
\[\dfrac{[\operatorname{tr}(1-A)]^2}{\operatorname{tr}[(1-A)^2]}.\] This is a number between $\operatorname{tr}(1-A)$ and $1$ and values of this quantity away from $1$ indicate that there are several roots away from the end points $0$ and $1$. This expression also appears in follow up work of MSS on restricted invertibility and the utility of these has been discussed by Assaf Naor and Pierre Youssef in their recent paper \cite{NaoYou}. The following theorem shows that we can get principal submatrices with well controlled norm all the way up to this quantity, which can be much larger than the conventional stable rank of the matrix.

\begin{theorem}
For any $0 \leq \delta \leq1$ and any positive contraction $A \in M_n(\mathbb{C})$, there is a principal submatrix $A_S$ of size $cn$ where $c = \delta  \dfrac{\operatorname{tr}(B)^2}{\operatorname{tr}(B^2)}$, where $B = I-A$ such that
\[||A_S|| \leq 1 - \operatorname{tr}(B)\left[\sqrt{1-c}-\sqrt{\delta-c}\right]^2.\]
\end{theorem}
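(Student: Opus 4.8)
The plan is to feed the first theorem of the paper into the Marcus derivative estimate, and then to run a resolvent--trace optimisation in which the quantity $\operatorname{tr}(B)^{2}/\operatorname{tr}(B^{2})$ emerges of its own accord. Since $A$ is a positive contraction, every principal submatrix $A_{S}$ is again a positive contraction, so $\|A_{S}\|=\lambda_{max}(A_{S})$; hence, by the first theorem, it suffices to produce a principal submatrix $A_{S}$ of size $cn$ for which $\lambda_{max}(A_{S})$ is at most the largest root of $\chi^{((1-c)n)}[A]$, the $(1-c)n$'th derivative of $\chi[A]$ (a polynomial of degree $cn$, which matches the size of $A_{S}$). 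The roots of $\chi[A]$ all lie in $[0,1]$; a root at $1$ does no harm, since the Marcus bound $b-\frac{k}{\varphi(b)}$ is valid for every $b$ strictly above the largest root of the polynomial, and the value of $b$ we shall use will exceed $1$.

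Apply the Marcus theorem with $k=(1-c)n$ to $p=\chi[A]$. Writing $B=I-A$, whose eigenvalues are $\mu_{1},\dots,\mu_{n}\in[0,1]$, and taking $b=1+s$ with $s>0$, one has $\varphi(b)=\sum_{i}\bigl(b-\lambda_{i}(A)\bigr)^{-1}=\operatorname{Tr}\bigl((sI+B)^{-1}\bigr)=\sum_{i}(s+\mu_{i})^{-1}$, so that
\[
\|A_{S}\|\ \le\ \inf_{s>0}\left(1+s-\frac{(1-c)n}{\sum_{i}(s+\mu_{i})^{-1}}\right).
\]

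The crux is a sharp upper bound on this resolvent trace through the first two moments of $B$. Put $\beta=\operatorname{tr}(B)$, $\gamma=\operatorname{tr}(B^{2})$, $v=\gamma/\beta$ and $q=\beta^{2}/\gamma$, so that $qn$ is exactly the modified stable rank and $c=\delta q$. I claim that
\[
\sum_{i}\frac{1}{s+\mu_{i}}\ \le\ n\left(\frac{1-q}{s}+\frac{q}{s+v}\right);
\]
that is, among all nonnegative spectra with the given $\operatorname{Tr}(B)$ and $\operatorname{Tr}(B^{2})$ the resolvent trace is maximised by the two--atom spectrum putting mass $1-q$ at $0$ and mass $q$ at $v$ (the unique such measure on $[0,\infty)$). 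To prove it, exhibit a quadratic $Q(\mu)=a_{2}\mu^{2}+a_{1}\mu+a_{0}$ with $Q(0)=\frac1s$, $Q(v)=\frac1{s+v}$, $Q'(v)=-\frac1{(s+v)^{2}}$; then the cubic $R(\mu)=1-(s+\mu)Q(\mu)$ has a simple zero at $0$ and a double zero at $v$, so $R(\mu)=-a_{2}\,\mu(\mu-v)^{2}$, and evaluating at $\mu=-s$ gives $a_{2}=1/\bigl(s(s+v)^{2}\bigr)>0$; hence $R\le0$ on $[0,\infty)$, that is, $Q(\mu)\ge(s+\mu)^{-1}$ there. Summing, $\sum_{i}(s+\mu_{i})^{-1}\le\sum_{i}Q(\mu_{i})=n\bigl(a_{2}\gamma+a_{1}\beta+a_{0}\bigr)$, and $a_{2}\gamma+a_{1}\beta+a_{0}$ is $Q$ integrated against the two--atom measure $(1-q)\delta_{0}+q\delta_{v}$ (which carries moments $1,\beta,\gamma$), namely $(1-q)Q(0)+qQ(v)=\frac{1-q}{s}+\frac{q}{s+v}$. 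It is worth noting that the crude tangent bound $(s+\mu)^{-1}\le s^{-1}-\mu s^{-2}+\mu^{2}s^{-3}$ is too weak: substituted into the infimum it yields only the trivial $\|A_{S}\|\le1$ as soon as $\delta>\tfrac14$, so the two--atom estimate is genuinely required, and establishing it cleanly is the step I expect to be the main obstacle.

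It remains to substitute and optimise. Since $\frac{1-q}{s}+\frac{q}{s+v}=\frac{s+(1-q)v}{s(s+v)}$, one minimises over $s>0$ the function $1+s-\frac{(1-c)\,s(s+v)}{s+(1-q)v}$; differentiating, the minimiser is $s^{\star}=\sqrt{(1-q)v\bigl((1-q)v+\ell/c\bigr)}-(1-q)v$ with $\ell=(1-\delta)qv$, and a direct computation of the value there -- using $c=\delta q$ and $\beta v=\gamma$, so that $(1-q)v=\frac{\delta-c}{\delta}\,v$, $(1-q)v+\ell/c=\frac{1-c}{\delta}\,v$, and $qv=\beta$ -- collapses the minimum to exactly
\[
1-\beta\bigl(\sqrt{1-c}-\sqrt{\delta-c}\bigr)^{2}\ =\ 1-\operatorname{tr}(B)\bigl(\sqrt{1-c}-\sqrt{\delta-c}\bigr)^{2},
\]
which is the assertion. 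The boundary cases are disposed of separately and are immediate: $\beta=0$ forces $A=I$ and $c=0$, so the statement is vacuous; $q=1$ means $B$ is scalar (and $s^{\star}=0$); $\delta=1$ gives the trivial right side $1$. Thus, modulo the resolvent estimate, the remaining steps are bookkeeping -- with the agreeable feature that the optimisation telescopes to an exact equality, which is precisely why $c=\delta\,\operatorname{tr}(B)^{2}/\operatorname{tr}(B^{2})$ is the threshold that appears in the statement.
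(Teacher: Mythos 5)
Your proof is correct, and it arrives at the theorem by the same overall route as the paper: reduce via Theorem~\ref{inter high} to bounding the largest root of $\chi^{((1-c)n)}[A]$, apply the barrier estimate of Proposition~\ref{bar}, bound the resolvent--trace $\sum_i(b-\lambda_i)^{-1}$ by an extremal two--atom spectrum determined by $\operatorname{tr}(B)$ and $\operatorname{tr}(B^2)$, and optimise over $b$. Indeed, with $\lambda=1-\mu$ your two--atom measure (mass $1-q$ at $\mu=0$ and mass $q$ at $\mu=v=\operatorname{tr}(B^2)/\operatorname{tr}(B)$) is exactly the configuration in the paper's Lemma~\ref{lemma:associativity} (mass $s$ at $\lambda=1$ and mass $t$ at $\lambda=x=(\alpha-\beta)/(1-\alpha)$, with $1-v=x$, $1-q=s$, $q=t$), and your subsequent substitution telescopes to $1-\operatorname{tr}(B)\bigl(\sqrt{1-c}-\sqrt{\delta-c}\bigr)^2$ just as the paper's does. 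Where you genuinely diverge — and improve — is in the proof of the two--moment resolvent bound itself. The paper's appendix establishes Lemma~\ref{lemma:associativity} by a long perturbation argument that the author himself calls ``elementary but tedious'': it moves triples of eigenvalues to force a maximiser to have at most two values besides $1$, and then optimises over all such configurations. Your argument constructs the quadratic $Q$ with $Q(0)=1/s$, $Q(v)=1/(s+v)$, $Q'(v)=-1/(s+v)^2$, factors $1-(s+\mu)Q(\mu)=-a_2\,\mu(\mu-v)^2$, pins down $a_2=1/\bigl(s(s+v)^2\bigr)>0$ by evaluating at $\mu=-s$, and concludes $Q\ge(s+\mu)^{-1}$ on $[0,\infty)$; summing and using that the two--atom measure reproduces the moments $1,\operatorname{tr}(B),\operatorname{tr}(B^2)$ finishes the bound. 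This is the classical Gauss--quadrature/Chebyshev-system trick for Stieltjes-type moment problems; it is considerably shorter, avoids any case analysis, and in fact requires only $\mu_i\ge 0$ rather than $\mu_i\in[0,1]$, so it is at once cleaner and marginally more general than the appendix argument. The remaining optimisation over $s$ is bookkeeping and you have executed it correctly, including the pleasing exact collapse of the minimum.
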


Significantly, this submatrix can be found quickly using a simple algorithm. Applying this theorem to $I-A$ in place of $A$, one can get well invertible principal submatrices.  
\begin{theorem}\label{RI}
For any $0 \leq \delta \leq1$ and any positive contraction $A \in M_n(\mathbb{C})$, there is a principal submatrix $A_S$ of size $cn$ where $c = \delta  \dfrac{\operatorname{tr}(A)^2}{\operatorname{tr}(A^2)}$ such that
\[\lambda_{\operatorname{min}} (A_S) \geq  \operatorname{tr}(A)\left[\sqrt{1-c}-\sqrt{\delta-c}\right]^2.\]
\end{theorem}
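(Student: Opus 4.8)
The plan is to obtain this as a direct corollary of the preceding theorem by passing to the complementary contraction. Set $B = I - A$. Since $0 \le A \le I$, the matrix $B$ is again a positive contraction in $M_n(\mathbb{C})$, and $I - B = A$; in particular $\operatorname{tr}(I-B) = \operatorname{tr}(A)$ and $\operatorname{tr}((I-B)^2) = \operatorname{tr}(A^2)$, so the ratio $\operatorname{tr}(I-B)^2/\operatorname{tr}((I-B)^2)$ governing the size of the submatrix in the preceding theorem, applied to $B$, equals $\operatorname{tr}(A)^2/\operatorname{tr}(A^2)$. Hence applying that theorem to $B$ with the same $\delta$ produces a set $S \subset [n]$ of size exactly $cn$ with $c = \delta\,\operatorname{tr}(A)^2/\operatorname{tr}(A^2)$.

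For that $S$, the preceding theorem gives $\|B_S\| \le 1 - \operatorname{tr}(A)\bigl[\sqrt{1-c} - \sqrt{\delta - c}\bigr]^2$. Now forming a principal submatrix commutes with subtracting from the identity, so $B_S = (I - A)_S = I - A_S$, where on the right $I$ is the identity on $\mathbb{C}^S$. The compression $A_S$ of the positive contraction $A$ is itself a positive contraction, so $I - A_S \ge 0$ and $\|I - A_S\| = 1 - \lambda_{\min}(A_S)$. Substituting the norm bound yields $1 - \lambda_{\min}(A_S) \le 1 - \operatorname{tr}(A)\bigl[\sqrt{1-c}-\sqrt{\delta-c}\bigr]^2$, which rearranges to the asserted lower bound on $\lambda_{\min}(A_S)$.

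There is no substantial obstacle here: the argument is a one-step reduction, and the only things to check are the invariance of the modified stable rank ratio under $B \mapsto I - B$ (so that the guaranteed size $cn$ is unchanged) and the spectral identity $\|I - A_S\| = 1 - \lambda_{\min}(A_S)$, both of which follow from $0 \le A_S \le I$. Finally, since the submatrix in the preceding theorem is produced by an explicit algorithm, so is the one here — one simply runs that algorithm on $I - A$ — which justifies the remark that well-invertible principal submatrices can be found quickly.
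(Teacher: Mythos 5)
Your proposal is correct and matches the paper's own derivation: Theorem \ref{RI} is obtained exactly as you describe, by applying the preceding norm-bound theorem to $I-A$ and using $\lambda_{\min}(A_S) = 1 - \lambda_{\max}((I-A)_S)$. The checks you flag (invariance of the modified stable rank under $A \mapsto I-A$ and the spectral identity for the positive contraction $A_S$) are indeed the only points needed, and they hold as you state.
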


The restricted invertibility principle is often stated in terms of linear transformations in the following way: Let $T: \mathbb{C}^m \rightarrow \mathbb{C}^n$ be a linear operator. We are interested in picking a large co-ordinate subset $\sigma \subset [m]$ such that $T\mid_{P_{\sigma}\mathbb{C}^m}$ has all its singular values large.  It is easy to see that the smallest singular value of $T$ is the square root of the smallest eigenvalue of the block compression of $T^{*}T$ given by the elements of $\sigma$. Theorem (\ref{RI}) applied to $T^{*}T$ gives us the following, where use the fact that $\operatorname{Tr}(TT^{*}) = \operatorname{Tr}(T^{*}T)$,
\begin{theorem}
Let $T:  \mathbb{C}^m \rightarrow \mathbb{C}^n$ be a linear operator. Then, for any $0 \leq \delta \leq 1$, there is a subset $\sigma$  of size $|\sigma|  = \delta \dfrac{||T||_2^4}{||T||_4^4}$ and such that, letting $c = \dfrac{|\sigma|}{m}$, we have,
\[s_{min}(T\mid_{P_{\sigma}\mathbb{C}^m}) \geq  \dfrac{||T||_2}{\sqrt{m}}\left[\sqrt{1-c}-\sqrt{\delta-c}\right].\]
\end{theorem}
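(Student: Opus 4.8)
The plan is to obtain this statement as a direct corollary of Theorem \ref{RI}, applied to the Gram operator $B = T^{*}T \in M_{m}(\mathbb{C})$, which is positive semidefinite. Since Theorem \ref{RI} is stated for positive contractions, I would first normalize, applying it to $A := B/||B||$; equivalently, one notes that both sides of the inequality in Theorem \ref{RI} are homogeneous of degree one in $A$ while the ratio $c = \delta\,\operatorname{tr}(A)^{2}/\operatorname{tr}(A^{2})$ is scale invariant, so the conclusion holds verbatim for $B$ itself (with $\operatorname{tr}$ now the normalized trace on $M_{m}$). Either way, for the given $\delta$ I get a coordinate set $\sigma \subset [m]$ with $|\sigma| = cm$, where $c = \delta\,\operatorname{tr}(B)^{2}/\operatorname{tr}(B^{2})$, such that the principal submatrix $P_{\sigma}\,T^{*}T\,P_{\sigma}$, viewed as an operator on $P_{\sigma}\mathbb{C}^{m}$, has smallest eigenvalue at least $\operatorname{tr}(B)\big[\sqrt{1-c}-\sqrt{\delta-c}\big]^{2}$.

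The next step is to translate the normalized traces of $B$ into Schatten norms of $T$. Writing $s_{1},\dots,s_{m}$ for the singular values of $T$ and using $\operatorname{Tr}(TT^{*}) = \operatorname{Tr}(T^{*}T)$, one has $\operatorname{tr}(B) = \tfrac{1}{m}\sum s_{i}^{2} = ||T||_{2}^{2}/m$ and $\operatorname{tr}(B^{2}) = \tfrac{1}{m}\sum s_{i}^{4} = ||T||_{4}^{4}/m$, whence
\[c = \delta\,\frac{\operatorname{tr}(B)^{2}}{\operatorname{tr}(B^{2})} = \delta\,\frac{||T||_{2}^{4}}{m\,||T||_{4}^{4}}\qquad\text{and}\qquad |\sigma| = cm = \delta\,\frac{||T||_{2}^{4}}{||T||_{4}^{4}}.\]
The Cauchy--Schwarz inequality gives $\operatorname{tr}(B)^{2} \leq \operatorname{tr}(B^{2})$, so $c \leq \delta \leq 1$; this makes $\sqrt{1-c}-\sqrt{\delta-c}$ a well defined nonnegative real number and also ensures $|\sigma| \leq \operatorname{rank}(T) \leq \min(m,n)$, so the selected set is nonvacuous. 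As in the earlier theorems, one reads $|\sigma|$ as $\lfloor cm\rfloor$.

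Finally I would identify $\lambda_{\min}\big(P_{\sigma}T^{*}TP_{\sigma}\big)$ with $s_{\min}(T\mid_{P_{\sigma}\mathbb{C}^{m}})^{2}$: as an operator on $P_{\sigma}\mathbb{C}^{m}$, $P_{\sigma}T^{*}TP_{\sigma}$ equals $(T\mid_{P_{\sigma}\mathbb{C}^{m}})^{*}(T\mid_{P_{\sigma}\mathbb{C}^{m}})$, whose smallest eigenvalue is the square of the smallest singular value of $T\mid_{P_{\sigma}\mathbb{C}^{m}}$. Substituting $\operatorname{tr}(B) = ||T||_{2}^{2}/m$ into the lower bound from Theorem \ref{RI} and taking square roots yields
\[s_{\min}(T\mid_{P_{\sigma}\mathbb{C}^{m}}) \;\geq\; \frac{||T||_{2}}{\sqrt{m}}\Big[\sqrt{1-c}-\sqrt{\delta-c}\Big],\]
which is the claim. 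There is no genuine obstacle here: all of the mathematical substance sits in Theorem \ref{RI} (and hence in the Thompson--Cauchy interlacing identity and the barrier-function estimates preceding it), and this deduction is pure bookkeeping, the only points requiring a moment's care being the homogeneity/rescaling of Theorem \ref{RI}, the conversion of $\operatorname{tr}(B)$ and $\operatorname{tr}(B^{2})$ into $||T||_{2}$ and $||T||_{4}$, and the standard fact that restricting $T$ to a coordinate subspace corresponds to passing to a principal submatrix of $T^{*}T$.
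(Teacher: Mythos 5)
Your proposal is correct and is exactly the paper's route: apply Theorem \ref{RI} to $T^{*}T$, identify $\lambda_{\min}\big(P_{\sigma}T^{*}TP_{\sigma}\big)$ with $s_{\min}(T\mid_{P_{\sigma}\mathbb{C}^{m}})^{2}$, and convert the normalized traces into $\|T\|_{2}$ and $\|T\|_{4}$ via $\operatorname{Tr}(T^{*}T)=\operatorname{Tr}(TT^{*})$. Your remark on the homogeneity of Theorem \ref{RI} (so it applies to any positive semidefinite matrix, not just contractions) is a small but welcome bit of care that the paper leaves implicit.
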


\begin{remark}\label{MSSIRI}
Assaf Naor and Pierre Youssef in their beautiful paper \cite{NaoYou}, also prove improved restricted invertibility results: Our results are not strictly comparable to theirs and should be seen as complementary. Further, they mention a currently unpublished result of Marcus, Spielman and Srivastava from 2013 \cite{MSS3}, which achieves almost the same estimates as in this last theorem: They are able to get well invertible matrices upto size $\dfrac{1}{4} \dfrac{||T||_2^4}{||T||_4^4}$. The lower bound on the smallest singular value is in our notation, $\dfrac{||T||_2}{\sqrt{m}}\left[\sqrt{1-2\sqrt{\delta}}\right]$, which is slightly weaker than ours. 
\end{remark}

\begin{remark}
Working with principal submatrices yields an alternate combinatorial approach in the spirit of MSS to the Kadison-Singer problem, allowing us to prove Anderson's paving conjecture directly, without using Weaver's $KS_r$ as an intermediate step. In a companion paper, we are able to get useful estimates and also a combinatorial statement that would imply optimal paving estimates.
 \end{remark}
 
 We next prove an extension on the result concerning the roots of higher derivatives of real rooted polynomials to the non real-rooted case. The fundamental Gauss-Lucas theorem\cite{RS02}[2.1] says that the critical points of a univariate polynomial lie in the convex hull of the polynomial's roots. Given a polynomial $p$ and a positive integer $k$, we let $p^{(k)}$ denote the $k$'th derivative of $p$. We also use the notation $\sigma(p)$ to denote the roots of $p$ and $\mathcal{K}(p)$ to denote the convex hull of the roots of $p$. Letting $n$ be the degree of $p$, we have a nested collection of convex sets, 
   \[\mathcal{K}(p) \supset \mathcal{K}(p') \supset \mathcal{K}(p^{(2)}) \supset \cdots \supset \mathcal{K}(p^{(n-1)}).\]
   
   It is easy to see that if let $\alpha$ be the average of the elements in $\sigma(p)$, the average of the elements in $\sigma(p^{(k)})$ equals $\alpha$ as well, for every $1 \leq k \leq n-1$. In particular, the convex sets $\mathcal{K}(p^{(k)})$ shrink to the one element set $\mathcal{K}(p^{(n-1)}) = \{\alpha\}$. It is natural to ask how quickly the sizes of these sets can shrink, something that we could not find a reference to in the literature. We prove the following universal estimate, where given a set $A$  in the plane, $|A|$ refers to the area of $A$. 
   \begin{theorem}\label{convex}
   Let $p$ be a degree $n$ polynomial. Then, for any $c \geq \dfrac{1}{2}$, we have that,
   \[|\mathcal{K}(p^{(\lceil cn \rceil)})| \leq 4(c-c^2)\,|\mathcal{K}(p)|.\]
   \end{theorem}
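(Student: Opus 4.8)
The plan is to deduce the planar estimate from the one-dimensional theorem on higher derivatives of real-rooted polynomials stated above, by slicing $\mathcal{K}(p^{(k)})$ (with $k=\lceil cn\rceil$) in every direction and then reassembling the directional information into an area bound.

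First, some normalisation. Complex differentiation commutes with similarities of the plane: if $q(z)=p(uz+v)$ then $q^{(j)}(z)=u^j p^{(j)}(uz+v)$ for every $j$, so $\sigma(q^{(j)})$ is the image of $\sigma(p^{(j)})$ under one and the same inverse similarity; hence $|\mathcal{K}(p^{(k)})|/|\mathcal{K}(p)|$ is a similarity invariant, and I may translate so that the average $\alpha$ of the roots of $p$ is the origin. It then suffices to bound, for each unit vector $u$, the support-function value $\tilde h(u):=\max_{\zeta\in\sigma(p^{(k)})}\operatorname{Re}(\bar u\zeta)$; rotating, we may take $u=1$, so writing $q(z)=p(uz)$ --- whose roots are $\bar u\lambda_1,\dots,\bar u\lambda_n$, with $\tfrac1n\sum_j\operatorname{Re}(\bar u\lambda_j)=\operatorname{Re}(\bar u\alpha)=0$ --- we must control $\max\operatorname{Re}\sigma(q^{(k)})$.

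The heart of the matter is a one-sided, complex analogue of Marcus's theorem: for any degree-$n$ polynomial $q$ with roots $\mu_i$, $\max\operatorname{Re}\sigma(q^{(k)})\le\operatorname{max root}(r^{(k)})$, where $r(x)=\prod_i(x-\operatorname{Re}\mu_i)$ is the real-rooted polynomial built from the real parts of the roots of $q$; in words, projecting the roots of $q$ orthogonally onto a line and then differentiating cannot decrease the rightmost point reached. I would prove this via the principal-submatrix identity $p^{(k)}=k!\sum_{|S|=k}\chi[D_S]$ with $D=\operatorname{diag}(\lambda_i)$: for each $S$ the matrix $(\operatorname{Re}(\bar u D))_S=\operatorname{Re}(\bar u D_S)$ is Hermitian with largest eigenvalue $\max\operatorname{Re}(\bar u\sigma(D_S))$, and the Marcus--Spielman--Srivastava interlacing-family argument over the $(n-k)$-element principal submatrices of $D$ --- whose associated real characteristic polynomials form a bona fide interlacing family --- carries the bound from the complex side over to $\operatorname{max root}(\chi^{(k)}[\operatorname{Re}(\bar u D)])=\operatorname{max root}(r^{(k)})$. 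Granting this and applying it to the rotated $q$ above, $r$ is a real-rooted degree-$n$ polynomial with root-average $0$ whose roots lie in $[-h_K(-u),h_K(u)]$, $K:=\mathcal{K}(p)$; when this interval is symmetric the one-dimensional consequence quoted above gives directly $\operatorname{max root}(r^{(k)})\le 2\sqrt{c-c^2}\,h_K(u)$, and in general $\operatorname{max root}(r^{(k)})$ is still controlled by Marcus's bound applied to the projected roots $\operatorname{Re}(\bar u\lambda_i)$.

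These directional bounds confine $\mathcal{K}(p^{(k)})$ to the convex body $\bigcap_u\{z:\operatorname{Re}(\bar u z)\le\operatorname{max root}(r_u^{(k)})\}$. When $K$ is centrally symmetric about $\alpha$ one is done at once: the estimate $2\sqrt{c-c^2}\,h_K(u)$ holds in every direction, so $\mathcal{K}(p^{(k)})\subseteq\alpha+2\sqrt{c-c^2}(K-\alpha)$ and the area picks up the square of that factor; replacing $c$ by $c'=\lceil cn\rceil/n\ge c$ only improves matters since $c\mapsto c-c^2$ decreases on $[\tfrac12,1]$, and the rotated polynomial $[(x-1)(x+1)]^{n/2}$ shows $4(c-c^2)$ cannot be improved. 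The genuine obstacle is the non-symmetric case: a direction-by-direction comparison of widths cannot by itself produce the constant $4(c-c^2)$ --- it fails already for a disc inscribed in an equilateral triangle --- so the argument must exploit that the one-dimensional data in all directions come from a single planar root configuration, i.e. that the barrier functions $\varphi_u(s)=\sum_i(s-\operatorname{Re}(\bar u\lambda_i))^{-1}$ are coupled across $u$, in tandem with $\mathcal{K}(p^{(k)})\subseteq K$ and the support-function area formula $|L|=\tfrac12\int_0^{2\pi}(h_L^2-h_L'^2)\,d\theta$. Converting this coupling into the exact constant $4(c-c^2)$ is where the real work lies.
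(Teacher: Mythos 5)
Your plan of slicing $\mathcal{K}(p^{(k)})$ in every direction and reassembling is indeed the paper's strategy, but the proposal has two genuine problems, one in the middle and one at the end.

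The directional bound: you propose to prove $\max\operatorname{Re}\sigma(q^{(k)})\le\operatorname{max\,root}(r^{(k)})$, with $r$ the real-parts polynomial, by expanding $q^{(k)}$ as a constant times $\sum_{|S|=k}\chi[D_{\hat S}]$ and running the MSS interlacing-family machinery on the Hermitian diagonal matrix $\operatorname{Re}(\bar u D)$. This does not carry over to the complex side. The interlacing-family argument bounds the maximum root of a sum of real-rooted polynomials that admit a common interlacer; here the summands $\chi[D_{\hat S}]$ are \emph{not} real-rooted (the $\lambda_i$ are complex), so there is no Cauchy interlacing on the side of $q^{(k)}$, and the argument you describe only controls $\operatorname{max\,root}\chi^{(k)}[\operatorname{Re}(\bar u D)]=\operatorname{max\,root}(r^{(k)})$ itself, saying nothing about where the roots of the complex polynomial $q^{(k)}$ lie. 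The inequality you need is exactly the content of the Pereira--Malamud theorem (Katsoprinakis's conjecture), $\sigma(R(p'))\prec\sigma((Rp)')$, which the paper iterates using Borcea and Br\"and\'en's result that differentiation preserves majorization of real-rooted spectra; that is a substantively different and genuinely deeper input than the submatrix identity, and your sketch does not recover it.

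The reassembly step: your scepticism here is warranted, and the inscribed-disc-in-a-triangle example is exactly the right one to raise. A uniform bound $w_L(u)\le r\,w_K(u)$ on widths together with a common centroid does \emph{not} in general yield $|L|\le r^2|K|$ (it does yield $\operatorname{per}(L)\le r\operatorname{per}(K)$ by Cauchy's formula, and it does yield $|L|\le r^{2}|K|$ in the centrally symmetric case, where $h_L-\langle\alpha,u\rangle=\tfrac12 w_L\le \tfrac r2 w_K=r(h_K-\langle\alpha,u\rangle)$ gives containment $L-\alpha\subseteq r(K-\alpha)$). You explicitly leave this gap open, so the proposal is incomplete. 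You should be aware, though, that the paper's own treatment of this step is a single sentence ("writing out the areas in polar coordinates shows\ldots") which does not address the non-symmetric case either; the difficulty you identified is real and is precisely the thinnest point of the published argument, so closing it --- for instance by replacing the symmetric width bound with a one-sided bound $h_L-\langle\alpha,u\rangle\le r\,(h_K-\langle\alpha,u\rangle)$ obtained directly from the one-sided Pereira--Malamud estimate and the one-sided barrier computation --- would be a worthwhile contribution rather than a mere formality.
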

   
   Note that this estimate $4(c-c^2)$ is independent of the polynomial or even the degree $n$. These estimates are certainly not sharp but we suspect that the $O(1-c)$ dependance is. Also, by looking at the polynomial $p(z) = (z^3-1)^m$, one sees that one needs to take the derivative at least $\frac{n}{3}-2$ times where $n = 3m$ is the degree of $p$, in order to get a shrinking of the areas of the convex hulls of higher derivatives. The theorem (\ref{convex}) as stated above cannot, by this simple observation, hold for $c \leq \frac{1}{3}$. It is conceivable that estimates could be got for $c$ in the range $[\frac{1}{3}, \frac{1}{2}]$, but we do not do this in this paper.
   
   
   The proof is a translation of the results in the real-rooted case to the complex rooted case using the notion of majorization between real sequences by applying results of Pereira \cite{PerKat} and Malamud \cite{Mal05}. This will allow us to prove estimates on root shrinking in each direction. Deducing estimates on the shrinking of the areas of the convex hulls will then be a simple corollary.

\section*{Acknowledgement}
I'd like to specially thank Betul Tanbay who started me off on this paper by asking if perhaps the machinery of MSS could be directly applied to Anderson's paving conjecture and who was very generous with her time. Another special thanks to Ozgur Martin, for all his help and insights. I'd also like to thank Amit Deshpande and Atilla Yilmaz for useful discussions. 

\section{The method of Interlacing polynomials}

Let $A$ be a hermitian matrix in $M_n(\mathbb{C})$ and let $A_k$ for $1 \leq k \leq n$ be the principal submatrices constructed by removing the $k'th$ row and column from $A$. The celebrated interlacing theorem of Cauchy says that the eigenvalues of $A_k$ interlace those of $A$. Writing this out in terms of characteristic polynomials, we have that the polynomials $\chi[A_k]$ all interlace the polynomial $\chi[A]$. As pointed by MSS, this implies that there is a some $k$ such that 
\begin{eqnarray}\label{MSSI}
\operatorname{max root} \chi[A_k] \leq \operatorname{max root} \sum \chi[A_k].
\end{eqnarray}
The last sum is well known, due to a theorem of R.C.Thompson \cite{ThoPS1}, 
\begin{theorem}[R.C.Thompson]\label{RCT}
Let $A \in \mathbb{C}$ and let $A_k$ be its defect $1$ principal submatrices. Then, 
\[\sum \chi[A_k] = \chi'[A].\]
\end{theorem}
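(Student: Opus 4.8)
The plan is to prove the identity by differentiating the determinant $\chi[A](x) = \det(xI-A)$ directly, exploiting the multilinearity of the determinant in the columns of its argument. Hermiticity plays no role; the identity holds verbatim for an arbitrary $A \in M_n(\mathbb{C})$, and is used here only because, combined with Cauchy interlacing, it yields the inequality (\ref{MSSI}).

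First I would regard $\chi[A](x) = \det(xI-A)$ as a polynomial in $x$ and differentiate. Writing $C_j = x e_j - A e_j$ for the $j$-th column of $xI-A$, we have $\frac{d}{dx}C_j = e_j$, so the multilinearity of $\det$ together with the product rule gives
\[
\chi'[A](x) = \sum_{k=1}^n \det\big(C_1, \dots, C_{k-1}, e_k, C_{k+1}, \dots, C_n\big),
\]
where in the $k$-th summand the $k$-th column of $xI-A$ has been replaced by $e_k$.

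Next I would evaluate each summand by cofactor expansion along its $k$-th column. Since that column equals $e_k$, the only surviving contribution is the $(k,k)$ cofactor, which carries sign $(-1)^{k+k} = +1$ and equals the determinant of the matrix obtained from $xI-A$ by deleting both the $k$-th row and the $k$-th column. That matrix is precisely $xI_{n-1} - A_k$, so the $k$-th summand is $\det(xI_{n-1}-A_k) = \chi[A_k](x)$. Summing over $k$ yields $\chi'[A] = \sum_{k=1}^n \chi[A_k]$. Equivalently, one can phrase the identical computation through Jacobi's formula $\frac{d}{dx}\det M(x) = \operatorname{tr}(\operatorname{adj}(M(x))\,M'(x))$ with $M(x)=xI-A$ and $M'(x)=I$, observing that the diagonal entries of $\operatorname{adj}(xI-A)$ are exactly the $\chi[A_k](x)$.

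There is no substantive obstacle in this argument; the only points needing care are bookkeeping ones — that the relevant cofactor sign is $+1$ because the $k$-th row and $k$-th column are deleted simultaneously, and that the resulting $(k,k)$ minor of $xI-A$ is genuinely $\chi[A_k](x)$, the characteristic polynomial of the defect-$1$ principal submatrix, and not of some other matrix. A sanity check on the degrees and leading terms is immediate: both sides are polynomials of degree $n-1$ with leading coefficient $n$.
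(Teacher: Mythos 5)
Your proof is correct. The paper itself does not prove this identity --- it simply cites R.C.\ Thompson's 1966 paper --- so your argument supplies a self-contained verification rather than an alternative to anything in the text. What you give is the standard derivation: differentiate $\det(xI-A)$ using multilinearity in the columns (equivalently, Jacobi's formula with $M'(x)=I$), note that replacing the $k$-th column by $e_k$ and expanding along it leaves exactly the $(k,k)$ minor of $xI-A$ with cofactor sign $+1$, and identify that minor as $\det(xI_{n-1}-A_k)=\chi[A_k](x)$. Your observation that hermiticity is irrelevant to the identity is also right and consistent with the paper's usage: the Hermitian hypothesis enters only afterwards, through Cauchy interlacing, when the identity is combined with the common-interlacer argument to obtain the inequality on maximal roots. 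The degree and leading-coefficient check ($n$ monic polynomials of degree $n-1$ on the left, $\chi'[A]$ of degree $n-1$ with leading coefficient $n$ on the right) is a fine sanity check; nothing is missing.
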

Combining (\ref{MSSI}) and theorem (\ref{RCT}), we conclude that given a hermitian $A \in M_n(\mathbb{C})$, there is is a $k$ such that 
\[\operatorname{max root} \chi[A_k] \leq \operatorname{max root}\chi'[A].\]

We can make a similar statement about the largest roots of repeated derivatives of the principal submatrices. 

\begin{lemma}\label{ind}
Let $A \in M_n(\mathbb{C})^{sa}$. Then, for any $m$, there is a $k$ such that 
\[\operatorname{max root} \chi^{(m)}[A_k] \leq \operatorname{max root} \chi^{(m+1)}[A].\]
\end{lemma}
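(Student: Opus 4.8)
The plan is to combine the Cauchy interlacing structure, Thompson's formula, and the basic fact from the theory of interlacing families that a common interlacer of a family of real-rooted polynomials forces one member of the family to have its largest root below (or at) the largest root of the sum. I would carry this out by induction on $m$, using the $m=0$ case (already spelled out in the excerpt, via equations (\ref{MSSI}) and Theorem (\ref{RCT})) as the base case.

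First I would record the key structural input: fix the hermitian matrix $A\in M_n(\mathbb{C})^{sa}$ and consider its defect-$1$ principal submatrices $A_1,\dots,A_n$. By Cauchy interlacing, $\chi[A]$ is a common interlacer for the family $\{\chi[A_k]\}_{k\in[n]}$. The crucial observation is that the interlacing relation is preserved under differentiation: if $f$ interlaces $g$ (both real-rooted), then $f'$ interlaces $g'$. Applying this $m$ times, $\chi^{(m)}[A]$ is a common interlacer for the family $\{\chi^{(m)}[A_k]\}_{k\in[n]}$, and in particular all the polynomials $\chi^{(m)}[A_k]$ are real-rooted. Now I invoke the standard ``interlacing family'' lemma: when polynomials $p_1,\dots,p_n$ share a common interlacer, $\sum_k p_k$ is real-rooted and there exists $k$ with $\operatorname{max\,root} p_k \leq \operatorname{max\,root} \sum_k p_k$ (and likewise some $k'$ with $\operatorname{max\,root} p_{k'} \geq \operatorname{max\,root}\sum_k p_k$). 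Hence there is a $k$ with
\[
\operatorname{max\,root}\chi^{(m)}[A_k] \leq \operatorname{max\,root}\sum_{j\in[n]}\chi^{(m)}[A_j].
\]

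Next I need to identify that sum. By Thompson's theorem (Theorem (\ref{RCT})), $\sum_{j}\chi[A_j] = \chi'[A]$; differentiating both sides $m$ times and using linearity of the derivative, $\sum_{j}\chi^{(m)}[A_j] = \chi^{(m+1)}[A]$. Substituting into the displayed inequality gives exactly
\[
\operatorname{max\,root}\chi^{(m)}[A_k] \leq \operatorname{max\,root}\chi^{(m+1)}[A],
\]
which is the claim. (The induction framing is optional here — the argument is really direct once one notes differentiation commutes with both Thompson's identity and the interlacing relation — but one could equally phrase it inductively by peeling off one derivative at a time, applying the $m=0$ statement to the family, then differentiating.)

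The only genuinely nontrivial point is the preservation of common interlacing under differentiation together with the ``some member beats the sum'' lemma; both are by now standard in the MSS interlacing-polynomials literature, so I would either cite them or include a short self-contained argument: for the differentiation step, note that if $g$ has roots $\mu_1\le\cdots\le\mu_d$ and $f$ has roots $\lambda_1\le\cdots\le\lambda_{d-1}$ with $\lambda_i\in[\mu_i,\mu_{i+1}]$, then Rolle's theorem places the $d-1$ roots of $g'$ one in each $[\mu_i,\mu_{i+1}]$ and the $d-2$ roots of $f'$ one in each $[\lambda_i,\lambda_{i+1}]$, and a short interval-chasing argument shows the latter interlace the former; for the sum step, one uses that $\sum_k p_k$ evaluated just above the largest root of the common interlacer has a sign forcing a root of the sum beyond it, while each $p_k$ already has its largest root beyond that point, and a continuity/convex-combination argument picks out the right $k$. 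I expect the interval-chasing in the differentiation step to be the fussiest part to write cleanly, but it presents no real obstacle.
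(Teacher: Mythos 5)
Your argument is exactly the paper's: Cauchy interlacing gives $\chi[A]$ as a common interlacer of the $\chi[A_k]$, common interlacing is preserved under differentiation, the MSS lemma gives a $k$ with $\operatorname{max\,root}\chi^{(m)}[A_k]\leq\operatorname{max\,root}\sum_j\chi^{(m)}[A_j]$, and the differentiated Thompson identity identifies that sum as $\chi^{(m+1)}[A]$. The only difference is that you sketch self-contained proofs of the two standard MSS ingredients (where the linear-combination/HKO characterization of interlacing would be cleaner than Rolle interval-chasing), while the paper simply cites them.
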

\begin{proof}
As noted above, the polynomials $\chi[A_k]$ for $k \in [n]$ have a common interlacer, namely $\chi[A]$. As pointed out by MSS, the property of having common interlacers is preserved under taking derivatives. This means that the polynomials, $\chi^{(m)}[A_k]$ for $k \in [n]$ have a common interlacer. Applying MSS's Markov principle for this set, we conclude that there is a $k$ such that 
\[\operatorname{max root} \chi^{(m)}[A_k] \leq \operatorname{max root} \sum \chi^{(m)}[A_k] = \operatorname{max root} \chi^{(m+1)}[A].\]
\end{proof}

We may now iterate this to get bounds for principal submatrices of any size. 
\begin{theorem}\label{inter high}
Let $A \in M_n(\mathbb{C})^{sa}$. Then, for any $m$, there is a principal submatrix $A_S$ of size $m$ such that 
\[\operatorname{max root} \chi[A_S] \leq \operatorname{max root}\chi^{(n-m)}[A].\]
\end{theorem}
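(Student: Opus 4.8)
The plan is to prove Theorem~\ref{inter high} by induction on $n-m$, using Lemma~\ref{ind} as the single-step engine. The base case $n-m=0$ is trivial: take $S=[n]$, so $A_S=A$ and $\chi[A_S]=\chi[A]=\chi^{(0)}[A]$, and the inequality is an equality. For the inductive step, suppose the statement holds for all self-adjoint matrices of every size whenever the number of derivative steps is at most $j$, and consider $A\in M_n(\mathbb{C})^{sa}$ with $n-m=j+1$.

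First I would apply Lemma~\ref{ind} directly to $A$ with the choice of derivative order $n-m-1 = j$: this produces an index $k\in[n]$ such that
\[
\operatorname{max root}\chi^{(j)}[A_k] \leq \operatorname{max root}\chi^{(j+1)}[A] = \operatorname{max root}\chi^{(n-m)}[A].
\]
Now $A_k\in M_{n-1}(\mathbb{C})^{sa}$, and we want a principal submatrix of $A_k$ of size $m$; since $(n-1)-m = j$, the induction hypothesis applies to $A_k$. It yields a subset $S'\subset[n-1]$ (indexing the rows/columns of $A_k$) with $|S'|=m$ and
\[
\operatorname{max root}\chi[(A_k)_{S'}] \leq \operatorname{max root}\chi^{(j)}[A_k].
\]
Chaining the two displayed inequalities and observing that a principal submatrix of a principal submatrix of $A$ is itself a principal submatrix of $A$ — so there is a set $S\subset[n]$ with $|S|=m$ and $A_S=(A_k)_{S'}$ — gives $\operatorname{max root}\chi[A_S]\leq\operatorname{max root}\chi^{(n-m)}[A]$, completing the induction.

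The only point requiring any care — and the step I would flag as the main (though minor) obstacle — is bookkeeping the indices when iterating the deletion: one must check that the result of deleting one coordinate and then a further set of coordinates agrees with deleting their union in the original index set, and that Lemma~\ref{ind} is being invoked at the correct derivative order at each stage (it is $\chi^{(m')}\mapsto\chi^{(m'+1)}$ that drops matrix dimension by one, so after $n-m$ applications we have taken $n-m$ derivatives and shed $n-m$ rows/columns, landing on size $m$). There is no analytic difficulty whatsoever; the content is entirely in Lemma~\ref{ind}, which in turn rests on Thompson's identity (Theorem~\ref{RCT}), the stability of the common-interlacer property under differentiation, and the MSS selection (``Markov'') principle. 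One should also note that the statement is purely existential — it does not track \emph{which} submatrix — which is exactly what makes the naive iteration legitimate, since at each stage we are free to pick whichever single index Lemma~\ref{ind} hands us.
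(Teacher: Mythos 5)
Your proof is correct and is essentially the paper's own argument, merely reorganized as a formal induction on $n-m$ rather than presented as an explicit $(n-m)$-fold iteration of Lemma~\ref{ind}. Both rest on the same single-step lemma and the same observation that a principal submatrix of a principal submatrix is a principal submatrix, so there is no substantive difference.
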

\begin{proof}
Using lemma (\ref{ind}), we may find find a principal submatrix $A_{S_1}$ of $A$ of size $n-1$ such that 
\[\operatorname{max root} \chi^{(n-m-1)}[A_{S_1}] \leq \operatorname{max root}\chi^{(n-m)}[A].\]
We next find a principal submatrix, $A_{S_2}$ of $A_{S_1}$ of size $n-2$ such that 
\[\operatorname{max root} \chi^{(n-m-2)}[A_{S_2}] \leq \operatorname{max root}\chi^{(n-m-1)}[A^1].\]
Iterating this a total of $n-m$ times, we get a principal submatrix $A_S :=A_{S_{n-m}}$ of size $m$ such that 
\[\operatorname{max root} \chi[A_S] \leq \operatorname{max root}\chi^{(n-m)}[A].\]
\end{proof}

\begin{remark}
This above process is algorithmic: One starts off with $A$ and compares the largest roots of $\chi^{(n-m-1)}(A_k)$ for $k \in [n]$ and selects the one such that the largest root is minimal. We then look at \emph{its} defect $1$ principal submatrices and select the one with the minimal largest root for $\chi^{(n-m-2)}$. We iterate this process a total of $n-m$ times to get the desired size $m$ principal submatrix.
\end{remark}

\section{The Batson-Marcus-Spielman-Srivastava barrier method}
The barrier method, introduced by Batson, Spielman and Srivastava \cite{BSS} and further clarified by Spielman and Srivastava \cite{SSEle} and Marcus, Spielman and Srivastava \cite{MSS2}, is a general method for getting estimates for the largest root of a real rooted polynomial. 

Given a real rooted polynomial $p$, Spielman et. al. define the potential function of $p$ by 
\[\Phi_p(b) := \dfrac{p'(b)}{p(b)} = \sum \dfrac{1}{b-\lambda_i}, \quad b > \lambda_{\operatorname{max}}.\]

This is a positive, monotone decreasing, convex function. They then use the inverse of this function to define a quantity called  $\operatorname{smax}$, a ``soft maximum'' for the largest root,
\[smax_{\varphi}(p) = b, \quad \text{ if } \Phi_p(b) = \varphi.\]
Here $\varphi$ is any positive real number and as MSS point out, $smax_{\varphi}(p) $ gives an upper bound for the largest root, with the precision of the bound controlled by the ``sensitivity'' parameter $\varphi$ (the precision increasing as $\varphi$ does). The utility of this function comes from the fact that this behaves in a controlled fashion when applying linear differential operators to the polynomial $p$. MSS apply this technique to control the largest roots of $(1-\dfrac{d}{dx})^m p$. When $p = x^n$, this is enough to yield the restricted invertibility principle in the isotropic case.

We will apply this method directly to the derivative operator.  We have that 
\begin{eqnarray}\label{der}
\Phi_{p'} = \dfrac{(p\Phi_p)'}{p\Phi_p} = \Phi_p+ \dfrac{\Phi'_p}{\Phi_p},
\end{eqnarray}
which shows that $\Phi_{p'}$ is smaller than $\Phi_p$. And since, $\Phi$ is decreasing, we have that $\operatorname{smax}_{\varphi}(p') \leq \operatorname{smax}_{\varphi}(p)$ for any $\varphi$. The key to the barrier  method is the following more refined estimate,
\begin{proposition}\label{barmove}
Let $p$ be a real rooted polynomial and $\varphi \in (0,\infty]$. Then, 
\[\operatorname{smax}_{\varphi}(p') \leq \operatorname{smax}_{\varphi}(p) - \dfrac{1}{\varphi}.\]
\end{proposition}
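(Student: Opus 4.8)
The plan is to reduce the statement to a single pointwise inequality for $\Phi_{p'}$ and then recognize that inequality as the tangent-line estimate for a well-chosen concave function. Since $\Phi_{p'}$ is strictly decreasing on $(\lambda_{\max}(p'),\infty)$, if we write $b = \operatorname{smax}_\varphi(p)$ it is enough to prove that $\Phi_{p'}\!\left(b - \tfrac{1}{\varphi}\right) \le \varphi = \Phi_p(b)$, together with the check that $b - \tfrac1\varphi$ lies strictly to the right of $\lambda_{\max}(p')$, so that this point does dominate $\operatorname{smax}_\varphi(p')$. By the identity (\ref{der}), $\Phi_{p'} = \Phi_p + \Phi_p'/\Phi_p$, this is really an inequality about $\Phi_p$ alone.

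First I would introduce the function $h := 1/\Phi_p$ on $(\lambda_{\max}(p),\infty)$. A direct computation gives $h' = -\Phi_p'/\Phi_p^{2} > 0$ and $h'' = \Phi_p^{-3}\bigl(2(\Phi_p')^{2} - \Phi_p\Phi_p''\bigr)$, so $h$ is increasing, and it is \emph{concave} precisely when $2(\Phi_p')^{2} \le \Phi_p \Phi_p''$; this last inequality is just Cauchy--Schwarz applied to the representations $\Phi_p = \sum \frac{1}{x-\lambda_i}$, $-\Phi_p' = \sum \frac{1}{(x-\lambda_i)^2}$, $\Phi_p'' = \sum \frac{2}{(x-\lambda_i)^3}$. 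Rewriting (\ref{der}) in terms of $h$ then yields the clean formula $\Phi_{p'} = \dfrac{1 - h'}{h}$.

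Now set $c := b - \tfrac1\varphi$. From $\Phi_p(b) \ge \frac{1}{b-\lambda_{\max}(p)}$ (with strict inequality once $\deg p \ge 2$) we get $h(b) = 1/\Phi_p(b) < b - \lambda_{\max}(p)$, hence $c = b - h(b) > \lambda_{\max}(p) \ge \lambda_{\max}(p')$, so $h$, $\Phi_{p'}$ and $\operatorname{smax}_\varphi(p')$ all make sense at $c$; the degenerate case $\deg p = 1$ is trivial ($p'$ has no roots), and $\varphi = \infty$, where $\tfrac1\varphi = 0$ and $\operatorname{smax}_\infty = \lambda_{\max}$, is just $\lambda_{\max}(p') \le \lambda_{\max}(p)$. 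Assuming now $\deg p \ge 2$ and $\varphi < \infty$, and using $b - c = h(b) = \tfrac1\varphi$, the target inequality $\Phi_{p'}(c) \le \varphi$ becomes $\dfrac{1 - h'(c)}{h(c)} \le \dfrac{1}{h(b)}$, i.e.
\[
h(b) \;\le\; h(c) + (b-c)\,h'(c),
\]
which is exactly the assertion that the concave function $h$ lies below its tangent line at $c$. Monotonicity of $\Phi_{p'}$ then upgrades $\Phi_{p'}(c) \le \varphi$ to $\operatorname{smax}_\varphi(p') \le c = \operatorname{smax}_\varphi(p) - \tfrac1\varphi$.

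The only genuine content is the concavity of $h = 1/\Phi_p$, equivalently the Cauchy--Schwarz bound $2(\Phi_p')^2 \le \Phi_p\Phi_p''$; everything else is bookkeeping. The one subtlety worth care is the domain check that $\operatorname{smax}_\varphi(p) - \tfrac1\varphi$ still lies strictly to the right of the largest root of both $p$ and $p'$ — which is why the cheap estimate $h(b) < b - \lambda_{\max}(p)$ is needed — along with dispatching the degenerate cases $\deg p = 1$ and $\varphi = \infty$ separately.
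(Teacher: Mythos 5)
Your proof is correct and follows essentially the same route as the paper: both hinge on the concavity of $h = 1/\Phi_p$, the tangent-line estimate for $h$ at the shifted point, and the identity $\Phi_{p'} = \Phi_p + \Phi_p'/\Phi_p$. You are in fact more careful than the paper, which asserts the concavity of $1/\Phi_p$ without the Cauchy--Schwarz verification and has a couple of sign slips in its intermediate displayed inequalities that your version avoids.
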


\begin{proof}
It is easy to see that $\dfrac{1}{\Phi_p(b)}$ is positive, increasing and concave, yielding that for any $b > \lambda_{max}(p)$ and $\delta  > 0$ such that $b - \delta > \lambda_{max}(p)$, 
\[\dfrac{1}{\Phi_p(b-\delta)} - \dfrac{1}{\Phi_p(b)} \leq \delta\left(\dfrac{1}{\Phi_p(b-\delta)}\right)'.\]
Let $\varphi = \Phi_p(b)$. Note that, 
\[\dfrac{1}{b - \lambda_{max}(p) } < \sum \dfrac{1}{b-\lambda_i(p) }= \Phi_p(b) = \varphi, \]
yielding that  $b - \dfrac{1}{\varphi} > \lambda_{max}(p)$. We now have that,
\[\Phi_p(b) - \Phi_p(b-\dfrac{1}{\varphi}) \leq \dfrac{\Phi'_p(b-\dfrac{1}{\varphi})}{\Phi_p(b-\dfrac{1}{\varphi})}.\]
By (\ref{der}), we have that 
\[\Phi_{p'}(b-\dfrac{1}{\varphi}) = \Phi_p(b-\dfrac{1}{\varphi})+ \dfrac{\Phi'_p(b-\dfrac{1}{\varphi})}{\Phi_p(b-\dfrac{1}{\varphi})},\]
yielding that 
\[\Phi_{p'}(b-\dfrac{1}{\varphi}) \leq \Phi_p(b), \quad \text{ where } \varphi = \Phi_p(b).\]

Since $\Phi$ is decreasing, we conclude that 
\[\operatorname{smax}_{\varphi}(p') \leq \operatorname{smax}_{\varphi}(p) - \dfrac{1}{\varphi}.\]

\end{proof}

Iterating this and noting that $\lambda_{max}(p) = \operatorname{inf}_{\varphi \geq 0}\, \operatorname{smax}_{\phi}(p)$ for any real rooted polynomial we see that 
\begin{proposition}\label{bar}
Let $p$ be a real rooted polynomial and $\varphi \in (0,\infty]$. Then, for any $k \leq \operatorname{deg}(p)$, we have,
\[\lambda_{max}(p^{(k)}) = \operatorname{inf}_{\varphi \geq 0} \,\, \operatorname{smax}_{\varphi}(p^{(k)}) \leq   \operatorname{inf}_{\varphi \geq 0} \,\ \operatorname{smax}_{\varphi}(p) - \dfrac{k}{\varphi}.\]
\end{proposition}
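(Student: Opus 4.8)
The plan is to iterate Proposition \ref{barmove} and then optimize over the sensitivity parameter $\varphi$. First I would observe that by Rolle's theorem the derivative of a real rooted polynomial is again real rooted, so $p', p'', \ldots, p^{(k)}$ are all real rooted and Proposition \ref{barmove} may be applied to each of them. Fix $\varphi \in (0,\infty)$. Since $\Phi_{q}$ is a strictly decreasing bijection from $(\lambda_{max}(q),\infty)$ onto $(0,\infty)$ for every non-constant real rooted $q$, the quantity $\operatorname{smax}_{\varphi}(q)$ is well defined for each of $q = p, p', \ldots, p^{(k)}$ (assuming $k \leq \deg p$, the last one being at worst a constant, which I would treat separately). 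Applying Proposition \ref{barmove} successively to $p, p', \ldots, p^{(k-1)}$ gives
\[\operatorname{smax}_{\varphi}(p^{(j+1)}) \leq \operatorname{smax}_{\varphi}(p^{(j)}) - \dfrac{1}{\varphi}, \qquad 0 \leq j \leq k-1,\]
and telescoping these $k$ inequalities yields $\operatorname{smax}_{\varphi}(p^{(k)}) \leq \operatorname{smax}_{\varphi}(p) - \dfrac{k}{\varphi}$.

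Next I would record the elementary identity $\lambda_{max}(q) = \inf_{\varphi > 0}\operatorname{smax}_{\varphi}(q)$ for any real rooted $q$. On one hand $\operatorname{smax}_{\varphi}(q) > \lambda_{max}(q)$ for every finite $\varphi$, since $\dfrac{1}{b-\lambda_{max}(q)} < \Phi_q(b)$ forces $b - \tfrac{1}{\Phi_q(b)} > \lambda_{max}(q)$; on the other hand $\Phi_q(b) \to \infty$ as $b \downarrow \lambda_{max}(q)$, so $\operatorname{smax}_{\varphi}(q)$ decreases to $\lambda_{max}(q)$ as $\varphi \to \infty$. Applying this with $q = p^{(k)}$ gives the claimed equality $\lambda_{max}(p^{(k)}) = \inf_{\varphi \geq 0}\operatorname{smax}_{\varphi}(p^{(k)})$. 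Taking the infimum over $\varphi > 0$ in the telescoped inequality and combining,
\[\lambda_{max}(p^{(k)}) = \inf_{\varphi > 0}\operatorname{smax}_{\varphi}(p^{(k)}) \leq \inf_{\varphi > 0}\left(\operatorname{smax}_{\varphi}(p) - \dfrac{k}{\varphi}\right),\]
which is the assertion; the $\varphi = \infty$ endpoint only contributes the trivial bound $\lambda_{max}(p)$ and may be included harmlessly.

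As for difficulty: there is essentially no obstacle, since the entire analytic content already sits in Proposition \ref{barmove}. What remains is the bookkeeping of a telescoping sum together with the monotonicity/limit fact about $\operatorname{smax}$. The only point deserving a moment's care is the degenerate case $k = \deg p$, where $p^{(k)}$ is a nonzero constant with no roots; one either restricts to $k < \deg p$ or adopts the convention $\lambda_{max} = -\infty$, under which the bound holds vacuously. I would also double-check that the same numerical value of $\varphi$ is used at every stage of the iteration (it is — Proposition \ref{barmove} is stated for an arbitrary fixed $\varphi$), so no drift in the parameter occurs across the $k$ applications.
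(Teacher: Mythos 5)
Your proof is correct and follows exactly the route the paper takes (the paper's proof is a one-line remark "iterating this and noting that $\lambda_{max}(p) = \inf_{\varphi \geq 0}\operatorname{smax}_{\varphi}(p)$"); you simply spell out the telescoping of Proposition \ref{barmove}, the Rolle's-theorem justification that each $p^{(j)}$ remains real rooted, the two-sided argument for the identity $\lambda_{max}(q) = \inf_{\varphi>0}\operatorname{smax}_{\varphi}(q)$, and the degenerate case $k = \deg p$, all of which the paper leaves implicit.
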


We have so far proceeded by noting that there is one submatrix $A_S$ of size $n-k$ whose largest eigenvalue can be controlled by the largest root of the $k$'th derivative of $\chi[A]$. This submatrix can be found iteratively, as pointed out in the last paragraph of section $(2)$. There is another pleasant algorithm to find this submatrix that is perhaps even more direct. 

The interlacing property of submatrices, which allows us to get information on eigenvalues can also be used analogously for quantities $\operatorname{smax}_{\phi}(p)$. The largest eigenvalue is the special case when $\phi = \infty$ and the same fact, that there is one principal defect $1$ submatrix whose largest eigenvalue is at most the largest root of the derivative also holds for these other quantities.

\begin{proposition}
Let $A \in M_n(\mathbb{C})^{sa}$ and let $\phi \in [0,\infty]$. Then, there is a defect $1$ principal submatrix $A_i$ such that,
\[\operatorname{smax}_{\varphi}(A_i) \leq \operatorname{smax}_{\varphi}(\chi'[A]).\]
\end{proposition}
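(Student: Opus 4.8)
The plan is to mimic exactly the reasoning of Lemma~\ref{ind}, but carrying the soft‑maximum functional $\operatorname{smax}_\varphi$ through the argument instead of the hard maximum $\lambda_{max}$. The two ingredients are: (i) the Cauchy interlacing theorem, which says that for a hermitian $A$ the polynomials $\chi[A_i]$, $i\in[n]$, all have the common interlacer $\chi[A]$; and (ii) Thompson's identity $\sum_i \chi[A_i]=\chi'[A]$. The only extra fact needed is a monotonicity property of $\operatorname{smax}_\varphi$ with respect to the "convex combination / common interlacer" operation, namely the Markov‑type principle of MSS applied to $\operatorname{smax}_\varphi$ rather than to the largest root.

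First I would record the relevant general statement about interlacing families: if $f_1,\dots,f_n$ are real‑rooted, of the same degree and with positive leading coefficient, and they possess a common interlacer, then for every $\varphi\in[0,\infty]$ there is an index $i$ with
\[
\operatorname{smax}_\varphi(f_i)\;\le\;\operatorname{smax}_\varphi\!\Big(\sum_{j} f_j\Big).
\]
The case $\varphi=\infty$ is precisely the MSS "max‑root" principle quoted in the excerpt. For finite $\varphi$ the proof is the same two‑line argument: the function $b\mapsto \Phi_{\sum_j f_j}(b)$ is, on the interval $(\lambda_{max}(\sum f_j),\infty)$, a positive convex combination (weighted by the $f_i/\sum f_j$) of the functions $\Phi_{f_i}$; more precisely $\sum_j f_j'=\sum_i f_i'$ gives $\Phi_{\sum f_j}=\big(\sum_i f_i \Phi_{f_i}\big)/\big(\sum_i f_i\big)$, so at the point $b^\ast:=\operatorname{smax}_\varphi(\sum_j f_j)$ where $\Phi_{\sum_j f_j}(b^\ast)=\varphi$, the value $\varphi$ is a weighted average of the numbers $\Phi_{f_i}(b^\ast)$; hence some $\Phi_{f_i}(b^\ast)\le\varphi$, and since each $\Phi_{f_i}$ is decreasing on $(\lambda_{max}(f_i),\infty)$ this forces $\operatorname{smax}_\varphi(f_i)\le b^\ast$. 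One must check that $b^\ast$ lies to the right of $\lambda_{max}(f_i)$ for the chosen $i$, which follows from the common‑interlacer hypothesis exactly as in the max‑root case, since $\lambda_{max}(f_i)\le\lambda_{max}(\sum_j f_j)<b^\ast$ when $\varphi<\infty$ (and with the convention $\operatorname{smax}_\infty=\lambda_{max}$ the claim is the MSS statement verbatim).

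Then the proposition follows immediately: apply the displayed inequality to $f_i=\chi[A_i]$, which have the common interlacer $\chi[A]$, and use Thompson's theorem $\sum_i\chi[A_i]=\chi'[A]$ to rewrite the right‑hand side as $\operatorname{smax}_\varphi(\chi'[A])$. This yields an $i$ with $\operatorname{smax}_\varphi(A_i)\le\operatorname{smax}_\varphi(\chi'[A])$, which is the assertion (here $\operatorname{smax}_\varphi(A_i)$ abbreviates $\operatorname{smax}_\varphi(\chi[A_i])$).

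The one point that needs genuine care — the "main obstacle" — is verifying that $\operatorname{smax}_\varphi$ is well defined and that the averaging step is legitimate at the relevant point: one needs all the $\Phi_{f_i}$ to be evaluated at a point $b^\ast$ strictly exceeding every $\lambda_{max}(f_i)$ so that each $\Phi_{f_i}(b^\ast)$ is a genuine positive real and the weights $f_i(b^\ast)/\big(\sum_j f_j(b^\ast)\big)$ are nonnegative and sum to one. For $\varphi<\infty$ this is automatic from $\lambda_{max}(f_i)\le\lambda_{max}(\sum_j f_j)<b^\ast$; the degenerate cases (a common root of all $f_i$, or $\varphi=\infty$) reduce to the max‑root principle already invoked in Lemma~\ref{ind}. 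Everything else is the verbatim MSS argument with $\operatorname{smax}_\varphi$ in place of $\lambda_{max}$.
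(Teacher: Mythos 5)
Your proof takes a genuinely different route from the paper's, but it has a real gap at precisely the point you flag as the ``main obstacle.'' You assert that $\lambda_{\max}(f_i) \le \lambda_{\max}\big(\sum_j f_j\big)$ holds for \emph{every} $i$, claiming this ``follows from the common-interlacer hypothesis exactly as in the max-root case.'' It does not: the MSS interlacing principle only gives $\min_i \lambda_{\max}(f_i) \le \lambda_{\max}\big(\sum_j f_j\big) \le \max_i \lambda_{\max}(f_i)$, so the largest root of the sum can lie strictly \emph{below} the top root of some individual $f_i$. A concrete example: $f_1 = x(x-1)$ and $f_2 = (x-\tfrac12)(x-10)$ share the common interlacer $x - \tfrac34$, yet $\lambda_{\max}(f_1+f_2) \approx 5.28 < 10 = \lambda_{\max}(f_2)$. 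The same phenomenon occurs in the proposition's setting: $\lambda_{\max}(\chi'[A])$ lies strictly inside $(\lambda_{n-1}(A),\lambda_n(A))$, while the top eigenvalue of a given $A_i$ can sit anywhere in that same interval, in particular above $\operatorname{smax}_\varphi(\chi'[A])$ once $\varphi$ is large. Consequently, at $b^* = \operatorname{smax}_\varphi\big(\sum_j f_j\big)$ some of the weights $f_i(b^*)/\sum_j f_j(b^*)$ can be negative, and the identity $\Phi_{\sum f_j} = \big(\sum_i f_i\,\Phi_{f_i}\big)/\big(\sum_i f_i\big)$ then writes $\varphi$ only as an \emph{affine}, not convex, combination of the values $\Phi_{f_i}(b^*)$. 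An affine combination with negative weights can be strictly smaller than every one of its constituents, so ``hence some $\Phi_{f_i}(b^*) \le \varphi$'' does not follow. There is a second, closely related, problem: for an index $i$ with $\lambda_{\max}(f_i) > b^*$, the quantity $f_i'(b^*)/f_i(b^*)$ is not on the decreasing branch of the potential, so even if you could bound it by $\varphi$, that would not yield $\operatorname{smax}_\varphi(f_i) \le b^*$.

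The paper avoids all of this by an argument specific to principal submatrices rather than a general interlacing-family statement: starting from $\chi[A_i](x) = \chi[A](x)\,e_i^*(xI-A)^{-1}e_i$, it writes $\Phi_{\chi[A_i]}$ explicitly in terms of the spectral weights $|U_{ij}|^2$, supposes for contradiction that $\Phi_{\chi[A_i]}(x) > \Phi_{\chi'[A]}(x)$ for every $i$, cross-multiplies the resulting inequalities, and sums over $i$ using $\sum_i |U_{ij}|^2 = 1$ for each $j$ to obtain a contradiction. Your plan would need a substantively new idea to control the indices with $f_i(b^*) < 0$ before it could be made to close.
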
 
\begin{proof}
It is a well known fact that,
\[\chi[A_i](x) = \chi[A] (x) e_i^*(xI-A)^{-1}e_i.\]
Let $(\lambda_1, \cdots, \lambda_n)$ be the eigenvalues of $A$ and let $D$ be the diagonal matrix, $D = \operatorname{diag}(\lambda_1, \cdots, \lambda_n)$. Choose a unitary $U$ such that $A = U^{*}DU$. We now see that we may write, 
\[\chi[A_i](x) = \chi[A](x)\sum_{j \in [n]} \dfrac{|U_{ij}|^2}{x - \lambda_j}.\]
We now see that,
\begin{eqnarray*}
\Phi_{\chi[A_i]}(x) = \dfrac{\chi'[A_i]}{\chi[A_i]} (x) &=& \dfrac{\chi'[A]}{\chi[A]}(x) - \dfrac{\sum_{j \in [n]}\dfrac{|U_{ij}|^2}{(x - \lambda_j)^2}}{\sum_{j \in [n]}\dfrac{|U_{ij}|^2}{x - \lambda_j}},\\
&=&\Phi_{A}(x) - \dfrac{\sum_{j \in [n]}\dfrac{|U_{ij}|^2}{(x - \lambda_j)^2}}{\sum_{j \in [n]}\dfrac{|U_{ij}|^2}{x - \lambda_j}}\\
&=& \Phi_{\chi'[A]}(x) +  \dfrac{\sum_{j \in [n]}\dfrac{1}{(x - \lambda_j)^2}}{\sum_{j \in [n]}\dfrac{1}{x - \lambda_j}}
 - \dfrac{\sum_{j \in [n]}\dfrac{|U_{ij}|^2}{(x - \lambda_j)^2}}{\sum_{j \in [n]}\dfrac{|U_{ij}|^2}{x - \lambda_j}}
\end{eqnarray*}
Suppose now that $\Phi_{\chi[A_i]}(x) > \Phi_{\chi'[A]}(x)$ for all $i \in [n]$. We then have that, for all $i \in [n]$,
 \[  \dfrac{\sum_{j \in [n]}\dfrac{1}{(x - \lambda_j)^2}}{\sum_{j \in [n]}\dfrac{1}{x - \lambda_j}}
 > \dfrac{\sum_{j \in [n]}\dfrac{|U_{ij}|^2}{(x - \lambda_j)^2}}{\sum_{j \in [n]}\dfrac{|U_{ij}|^2}{x - \lambda_j}}., \]
 which we may write as, 
 \[\left(\sum_{j \in [n]}\dfrac{1}{(x - \lambda_j)^2}\right) \,\left(\sum_{j \in [n]}\dfrac{|U_{ij}|^2}{x - \lambda_j}\right)> \left(\sum_{j \in [n]} \dfrac{1}{x - \lambda_j} \right)\,\left(\sum_{j \in [n]}\dfrac{|U_{ij}|^2}{(x - \lambda_j)^2}\right), \quad i \in [n] .\]
The matrix $U$ is unitary and thus, the columns are unit vectors, which means that $\sum_{j \in [n]}|U_{ij}|^2 = 1$ for each $i \in [n]$. Summing the above expression over $i$, we see that, 
 \[\left(\sum_{j \in [n]}\dfrac{1}{(x - \lambda_j)^2}\right) \,\left(\sum_{j \in [n]} \dfrac{1}{x - \lambda_j} \right)> \left(\sum_{j \in [n]} \dfrac{1}{x - \lambda_j} \right)\,\left(\sum_{j \in [n]}\dfrac{1}{(x - \lambda_j)^2}\right),\]
 a contradiction. We conclude that for every $x$, there is a $i$ such that,
 \[\Phi_{\chi[A_i]}(x)  \leq  \Phi_{\chi'[A]}(x) \]
 For any $\varphi$, let $x$ be such that $\Phi_{\chi'[A]}(x) = \varphi$. There is a $i$ such that $\Phi_{\chi[A_i]}  \leq \varphi$. Since $\Phi_{\chi[A_i]}$ is decreasing, we conclude that,
 \[\operatorname{smax}_{\varphi}(A_i) \leq \operatorname{smax}_{\varphi}(\chi'[A]).\]

\end{proof}

Together with (\ref{barmove}), we conclude, 
\begin{proposition}
Let $A \in M_n(\mathbb{C})^{sa}$ and let $\phi \in [0,\infty]$. Then, there is a defect $1$ principal submatrix $A_i$ such that,
 \[\operatorname{smax}_{\varphi}(A_i) \leq \operatorname{smax}_{\varphi}(\chi[A]) - \dfrac{1}{\varphi}.\]
\end{proposition}

\begin{remark}
This immediately gives us a sublime algorithm for getting principal submatrices with small largest eigenvalue. Fix a potential $\varphi$, and sequentially find defect $1$ submatrices with minimum $\operatorname{smax}_{\varphi}$. If $\varphi$ is chosen properly, see the next section, this will give us optimally small submatrices.
 \end{remark}

\section{Optimization}
With an estimate for the largest root of the $k$'th derivative in hand, let us now optimize this under various hypotheses. From now on, without loss of generality, we will work with positive contractions or equivalently, real rooted polynomials all of whose roots are in $[0,1]$. Our first hypothesis is the most natural way of ensuring that not all the roots are $1$, that is, avoiding the case when $A$ is the identity matrix. We simply demand that the average of the roots is some number $\alpha$ which will be taken to be less than $1$. We will use the following elementary lemma. 

\begin{lemma}\label{HM}
 
Suppose we are given a $b > 1$ and a collection of numbers $\lambda_1, \cdots, \lambda_n$ in $[0,1]$ with average $\alpha$, Then, 
\[ \sum \dfrac{1}{b - \lambda_i} \leq \dfrac{\alpha n}{b-1} + \dfrac{(1-\alpha)n}{b}\]
\end{lemma}

We give a proof of this elementary fact in order to keep this paper self-contained. 

\begin{proof}
  Suppose two,  of the numbers are inside $(0,1)$, so that without loss of generality, we have $0 <  a_1 \leq a_2  < 1$. It is easy to see that
 \[\dfrac{1}{a_1 - \epsilon} + \dfrac{1}{a_2+\epsilon} > \dfrac{1}{a_1} + \dfrac{1}{a_2}\] for sufficiently small values of $\epsilon$. Replacing the set $(a_1, a_2, \cdots)$ by $(a_1-\epsilon, a_2+\epsilon, \cdots)$ increases the value of the expression $\varphi = \sum \dfrac{1}{b-\lambda_i}$. It is now easy to see that the quantity is maximised when all the numbers, save at most one are either $0$ or $1$. Let $k = \lfloor n\alpha \rfloor$ and let $x = n\alpha - k$, which is a number in $[0,1]$. We than have that for every collection of numbers $\{\lambda_i\}$ satisfying the hypotheses, 
 \[\varphi = \sum \dfrac{1}{b-\lambda_i}\leq \dfrac{k}{b-1} + \dfrac{n-k-1}{b} + \dfrac{1}{b-x}.\]
 We also have by the harmonic mean inequality that,
 \[\dfrac{1}{b-x} \leq \dfrac{x}{b-1}+\dfrac{1-x}{b}.\]
 Adding these two inequalities, we get the desired result. 
\end{proof}

\begin{theorem}\label{mrr}
Let $p$ be a real rooted polynomial of degree $n$ with roots lying in $[0,1]$ and with the average of the roots $\alpha$. Then, for any $c \geq \alpha$, 
\[\operatorname{max \,root}(p^{(cn)}) \leq  \left(\sqrt{(1-\alpha)(1-c)}+\sqrt{\alpha\,c}\right)^2.\]
\end{theorem}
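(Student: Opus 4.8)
The plan is to combine the barrier estimate of Proposition~\ref{bar} with the harmonic-mean bound of Lemma~\ref{HM} and then optimize the barrier position by the arithmetic--geometric mean inequality.

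First I would unwind Proposition~\ref{bar}. Since $\operatorname{smax}_{\varphi}(p)$ is by definition the unique $b > \lambda_{max}(p)$ with $\Phi_p(b) = \varphi$, and $\Phi_p$ is a decreasing bijection of $(\lambda_{max}(p),\infty)$ onto $(0,\infty)$, the quantity $\inf_{\varphi \geq 0}\big(\operatorname{smax}_{\varphi}(p) - \tfrac{k}{\varphi}\big)$ is exactly $\inf_{b > \lambda_{max}(p)}\big(b - \tfrac{k}{\Phi_p(b)}\big)$, where $\Phi_p(b) = \sum_i \tfrac{1}{b-\lambda_i}$ and $\lambda_1,\dots,\lambda_n$ are the roots of $p$. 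Because all roots lie in $[0,1]$ we have $\lambda_{max}(p) \leq 1$, so restricting the infimum to $b > 1$ only weakens it; taking $k = \lceil cn\rceil$ (which only helps if $cn$ is not an integer) Proposition~\ref{bar} gives
\[ \operatorname{max\,root}(p^{(cn)}) \;\leq\; \inf_{b > 1}\Big( b - \frac{cn}{\Phi_p(b)} \Big). \]

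Next I would feed in Lemma~\ref{HM}: for $b > 1$ it gives $\Phi_p(b) \leq n\big(\tfrac{\alpha}{b-1} + \tfrac{1-\alpha}{b}\big)$, and since we are subtracting $\tfrac{cn}{\Phi_p(b)}$ this inequality pushes the bound in the favourable direction. A one-line simplification collapses $\tfrac{cn}{\,n(\alpha/(b-1) + (1-\alpha)/b)\,}$ to $\tfrac{c\,b(b-1)}{\,b-1+\alpha\,}$, so
\[ \operatorname{max\,root}(p^{(cn)}) \;\leq\; \inf_{b>1}\Big( b - \frac{c\,b(b-1)}{b-1+\alpha} \Big). \]
Substituting $t = b-1 > 0$ and performing the elementary division by $t+\alpha$ rewrites the bracket as
\[ (1-c)(t+\alpha) \;+\; \frac{c\,\alpha(1-\alpha)}{t+\alpha} \;+\; \big(1 - c - \alpha + 2c\alpha\big). \]
The AM--GM inequality applied to the first two positive terms bounds them below by $2\sqrt{c(1-c)\,\alpha(1-\alpha)}$, with equality at $t+\alpha = \sqrt{c\,\alpha(1-\alpha)/(1-c)}$; adding the constant and recognising $1 - c - \alpha + 2c\alpha + 2\sqrt{c(1-c)\alpha(1-\alpha)} = \big(\sqrt{(1-\alpha)(1-c)} + \sqrt{\alpha c}\big)^2$ yields the claimed estimate.

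The only point needing genuine care — and the only place the hypothesis $c \geq \alpha$ is used — is that the minimising point $t^{*} = \sqrt{c\,\alpha(1-\alpha)/(1-c)} - \alpha$ must be feasible, i.e.\ $t^{*} \geq 0$; a short manipulation shows this is equivalent to $c(1-\alpha) \geq \alpha(1-c)$, that is $c \geq \alpha$. When $c > \alpha$ one simply evaluates the bracket at $t = t^{*}$ (an interior barrier $b > 1$) and gets exactly $\big(\sqrt{(1-\alpha)(1-c)} + \sqrt{\alpha c}\big)^2$; when $c = \alpha$ one lets $t \to 0^{+}$, i.e.\ $b \to 1^{+}$, and the bracket tends to the (trivial) value $1 = \big(\sqrt{(1-\alpha)^2} + \sqrt{\alpha^2}\big)^2$. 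In either case $\inf_{b>1}(\cdots)$ is at most the asserted quantity, completing the proof. Apart from verifying that the AM--GM optimum lands in the admissible range, every step is routine, so I do not expect a real obstacle.
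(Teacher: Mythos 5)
Your proof is correct and follows essentially the same route as the paper's: combine Proposition~\ref{bar} with Lemma~\ref{HM}, simplify $b - cn/\varphi(b)$ to a one-variable expression, and minimize over the barrier $b>1$. The only cosmetic difference is that you substitute $t=b-1$ and invoke AM--GM where the paper computes the critical point of $b(1-c)+c\alpha+\frac{c\alpha(1-\alpha)}{b-(1-\alpha)}$ directly; both land on the same minimizer $b^{*}=(1-\alpha)+\sqrt{c\alpha(1-\alpha)/(1-c)}$. A small bonus in your write-up is that you explicitly verify the minimizer satisfies $b^{*}>1$ (equivalently $t^{*}\geq 0$), and observe that this is precisely where the hypothesis $c\geq\alpha$ is used — a point the paper passes over silently when it asserts the unimodal expression is minimized at an interior critical point.
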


\begin{proof}
By proposition (\ref{bar}), for any $b$, the quantity
\[b - \dfrac{cn}{\varphi(b)},\qquad \varphi(b) = \sum \dfrac{1}{b - \lambda_i},\]
is an upper bound for the largest root of $p^{(cn)}$. By lemma (\ref{HM}), we have that

\[\varphi(b) \leq \dfrac{\alpha n}{b-1} + \dfrac{(1-\alpha)n}{b} = \dfrac{n[b-(1-\alpha)]}{b(b-1)}.\]
This in turn, using theorem (\ref{bar}) yields that for every $b>1$, the following quantity upper bounds the maximum root,  
\begin{eqnarray}\label{cal}b - \dfrac{cn}{\varphi} = b - \dfrac{cb(b-1)}{b-(1-\alpha)} = b(1-c) + c\alpha + \dfrac{c\alpha(1-\alpha)}{b-(1-\alpha)}.
\end{eqnarray}
This expression, as a function of $b$ equals $1$ when $b = 1$, goes to $\infty$ as $b$ goes to infinity and is unimodal, decreasing to a unique global minimum and increasing subsequently. We calculate the critical point, getting that
\[b  = (1-\alpha) + \sqrt{\dfrac{c}{1-c}}\sqrt{\alpha(1-\alpha)}.\]
Substituting this in (\ref{cal}), we see that the largest root of $p^{(cn)}$ is bounded by 
\[(\sqrt{(1-\alpha)(1-c)}+\sqrt{c\alpha})^2.\]
This expression is strictly less than $1$ for any $c > \alpha$.
\end{proof}

We will use this simple optimization result again in what follows and we record it.
\begin{lemma}\label{alpha}
For any $x \in [0,1]$, the expression 
 \[b -  \dfrac{cn}{\varphi }, \quad \text{where } \varphi = \dfrac{\alpha n}{b-1} + \dfrac{(1-\alpha)n}{b-x}\]
 for $b > 1$ has minimum value equal to 
 \[x + (1-x)(\sqrt{(1-\alpha)(1-c)}+\sqrt{c\alpha})^2.\]
\end{lemma}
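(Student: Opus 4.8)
The plan is to reduce Lemma~\ref{alpha} to the optimization already carried out inside the proof of Theorem~\ref{mrr}, via a simple affine change of variable. Observe that in Theorem~\ref{mrr} the quantity being minimized has exactly the same shape but with $x = 1$ playing the role of the left endpoint: there $\varphi = \frac{\alpha n}{b-1} + \frac{(1-\alpha)n}{b}$, i.e.\ the roots were taken to sit at $1$ and at $0$. Here we have roots sitting at $1$ and at $x$ instead. So I would substitute $b = x + (1-x)t$ with $t > 1$ (this is legitimate since $b > 1 \iff t > 1$ because $x \le 1$), so that $b - 1 = (1-x)(t-1)$ and $b - x = (1-x)t$.

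With this substitution,
\[
\varphi = \frac{\alpha n}{(1-x)(t-1)} + \frac{(1-\alpha)n}{(1-x)t} = \frac{1}{1-x}\left(\frac{\alpha n}{t-1} + \frac{(1-\alpha)n}{t}\right) =: \frac{\tilde\varphi(t)}{1-x},
\]
where $\tilde\varphi(t)$ is precisely the bound on $\varphi$ appearing in the proof of Theorem~\ref{mrr}. Then
\[
b - \frac{cn}{\varphi} = x + (1-x)t - (1-x)\,\frac{cn}{\tilde\varphi(t)} = x + (1-x)\left(t - \frac{cn}{\tilde\varphi(t)}\right).
\]
Since $1-x \ge 0$, minimizing the left side over $b > 1$ is the same as minimizing $t - \frac{cn}{\tilde\varphi(t)}$ over $t > 1$, and that minimization was already performed in equations~(\ref{cal}) and what follows it in Theorem~\ref{mrr}: its minimum value is $\left(\sqrt{(1-\alpha)(1-c)}+\sqrt{c\alpha}\right)^2$. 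Plugging this back in gives the claimed minimum value $x + (1-x)\left(\sqrt{(1-\alpha)(1-c)}+\sqrt{c\alpha}\right)^2$.

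I expect no real obstacle here; the only points needing a word of care are (i) checking $b > 1 \iff t > 1$ when $x \le 1$ (immediate, and the degenerate case $x = 1$ makes the expression identically $1$, which matches the formula), and (ii) noting the sign $1-x \ge 0$ so that the affine reparametrization preserves the location and value-up-to-affine-shift of the minimum. Alternatively, if one prefers to avoid invoking the internal computation of Theorem~\ref{mrr}, one can simply differentiate $t - \frac{cn}{\tilde\varphi(t)}$ directly: as in (\ref{cal}) it equals $t(1-c) + c\alpha + \frac{c\alpha(1-\alpha)}{t-(1-\alpha)}$, which is unimodal with critical point $t = (1-\alpha) + \sqrt{\frac{c}{1-c}}\sqrt{\alpha(1-\alpha)}$, and substituting yields $\left(\sqrt{(1-\alpha)(1-c)}+\sqrt{c\alpha}\right)^2$. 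Either route is a couple of lines.
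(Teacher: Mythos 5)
Your proof is correct and is essentially the paper's own argument: the paper makes exactly the same affine substitution, $\tilde b = \dfrac{b-x}{1-x}$, and reduces to the $x=0$ case already handled in the proof of Theorem~\ref{mrr}. Minor slip: in your opening sentence you say ``with $x=1$ playing the role of the left endpoint'' in Theorem~\ref{mrr}, but you mean $x=0$ (so that $\varphi = \frac{\alpha n}{b-1} + \frac{(1-\alpha)n}{b}$); the rest of your computation already treats it correctly.
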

\begin{proof}
 When $x = 0$, this follows from the calculation in the previous proof. In general, this follows by making the substitution $\tilde{b} = \dfrac{b-x}{1-x}$. 
\end{proof}

The case when the average of the roots $\alpha$ is $\dfrac{1}{2}$, after translation and scaling, yields the following remarkable fact, for which I could not find a reference in the literature. 
\begin{theorem}\label{rzerosh}
Let $p$ be a real rooted polynomial of degree $n$ with roots lying in $[-1,1]$ and summing upto $0$. Then, for any $c \geq \dfrac{1}{2}$, 
\[\operatorname{roots}(p^{(cn)}) \subset [-2\sqrt{c-c^2},2\sqrt{c-c^2}].\]
\end{theorem}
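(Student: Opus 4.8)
The plan is to deduce this directly from Theorem \ref{mrr} by the affine change of variables that carries $[-1,1]$ to $[0,1]$. First I would set $q(x) := p(2x-1)$. A number $\mu$ is a root of $p$ iff $\tfrac{\mu+1}{2}$ is a root of $q$, so $q$ has degree $n$ with all roots in $[0,1]$; moreover the map $\mu \mapsto \tfrac{\mu+1}{2}$ sends the average $0$ of the roots of $p$ to the average $\tfrac12$ of the roots of $q$. Hence $q$ satisfies the hypotheses of Theorem \ref{mrr} with $\alpha = \tfrac12$, and the constraint $c \geq \alpha$ appearing there is exactly our hypothesis $c \geq \tfrac12$ (this is where that restriction is used).

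Next I would record how differentiation interacts with the substitution. By the chain rule, $q^{(k)}(x) = 2^k\, p^{(k)}(2x-1)$ for every $k$, so $x$ is a root of $q^{(k)}$ precisely when $2x-1$ is a root of $p^{(k)}$; equivalently, the roots of $p^{(k)}$ are the images of the roots of $q^{(k)}$ under $x \mapsto 2x-1$. Applying Theorem \ref{mrr} to $q$ with $\alpha = \tfrac12$ and $k = cn$ gives
\[
\operatorname{max\,root}(q^{(cn)}) \;\le\; \left(\sqrt{\tfrac12(1-c)} + \sqrt{\tfrac12 c}\,\right)^{2} \;=\; \tfrac12\bigl(1 + 2\sqrt{c-c^{2}}\bigr) \;=\; \tfrac12 + \sqrt{c-c^{2}},
\]
and pushing this bound through $x \mapsto 2x-1$ yields $\operatorname{max\,root}(p^{(cn)}) \le 2\sqrt{c-c^{2}}$.

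For the lower bound I would run the identical argument on $\tilde p(x) := p(-x)$, whose roots are the negatives of those of $p$ and therefore still lie in $[-1,1]$ with average $0$; since $\tilde p^{(cn)}(x) = (-1)^{cn} p^{(cn)}(-x)$, the roots of $\tilde p^{(cn)}$ are the negatives of the roots of $p^{(cn)}$, so the bound $\operatorname{max\,root}(\tilde p^{(cn)}) \le 2\sqrt{c-c^{2}}$ becomes $\operatorname{min\,root}(p^{(cn)}) \ge -2\sqrt{c-c^{2}}$. Combining the two estimates gives $\operatorname{roots}(p^{(cn)}) \subset [-2\sqrt{c-c^{2}},\,2\sqrt{c-c^{2}}]$. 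There is no genuinely hard step here: all the substance sits in Theorems \ref{bar} and \ref{mrr}, which are already in hand, and the only minor points to keep track of are that $cn$ should be a non-negative integer at most $n$ (otherwise one reads $p^{(\lceil cn\rceil)}$, and the estimate still applies since it is monotone in the order of the derivative) and that $c \ge \tfrac12$ is precisely what allows the borderline case $c = \alpha$ of Theorem \ref{mrr} to be invoked.
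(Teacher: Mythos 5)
Your proof is correct and follows essentially the same route as the paper, which obtains Theorem \ref{rzerosh} from Theorem \ref{mrr} with $\alpha=\tfrac12$ after exactly this translation and scaling (the substitution $q(z)=p(2z-1)$, together with a reflection for the minimum root, is the same device used in the proof of Lemma \ref{rrs}). The computation $\bigl(\sqrt{\tfrac12(1-c)}+\sqrt{\tfrac12 c}\bigr)^2=\tfrac12+\sqrt{c-c^2}$ and the transfer back via $x\mapsto 2x-1$ are accurate, so nothing is missing.
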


We now refine this analysis to take into account that the roots might be spread out, rather than concentrated at the end points $0$ and $1$. Let us demand that apart from the roots lying in $[0,1]$, we also have that
\[\sum \lambda_i = n \alpha, \qquad \sum \lambda_i^2 = n \beta.\]
It is immediate that $\alpha^2 \leq \beta \leq \alpha$, the first by Cauchy-Schwarz and the second by the condition that the roots lie in $[0,1]$. Under these constraints, we would like to see when the potential is maximized. 
\begin{lemma}
\label{lemma:associativity}
Suppose $\lambda_i$ for a $i \in [n]$ are a collection of real numbers in $[0,1]$ satisfying 
\[\sum \lambda_i = n \alpha, \qquad \sum \lambda_i^2 = n \beta.\]
Then, for any fixed $b > 1$, the quantity 
\[\varphi = \sum \dfrac{1}{b - \lambda_i},\] satisfies,
\[\varphi \leq \dfrac{ns}{b-1} + \dfrac{nt}{b-x}\]
where $s, t, x$ are given by, 
\[x = \dfrac{\alpha - \beta}{1-\alpha}, \qquad s = \dfrac{\beta - \alpha^2}{1-2\alpha + \beta}, \qquad t = 1- s = \dfrac{(1-\alpha)^2}{1-2\alpha+\beta}.\]
\end{lemma}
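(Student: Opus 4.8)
The plan is to read the statement as a moment problem. Write $\mu=\frac1n\sum_i\delta_{\lambda_i}$ for the empirical measure of the $\lambda_i$; the hypotheses say exactly that $\mu$ is a probability measure on $[0,1]$ with $\int\lambda\,d\mu=\alpha$ and $\int\lambda^2\,d\mu=\beta$, and we want to bound $\int g\,d\mu$, where $g(\lambda)=\frac1{b-\lambda}$. Since $g''(\lambda)=2(b-\lambda)^{-3}>0$ on $[0,1]$ (as $b>1$), $g$ is strictly convex there, so --- in the spirit of the harmonic-mean bound used in Lemma~\ref{HM}, but one polynomial degree higher --- the idea is to produce a quadratic $q(\lambda)=A+B\lambda+C\lambda^2$ with $q\ge g$ on $[0,1]$ and $q(1)=g(1)$, $q(x)=g(x)$. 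Granting such a $q$, integrating against $\mu$ gives
\[\varphi=\sum_i g(\lambda_i)=n\!\int g\,d\mu\ \le\ n\!\int q\,d\mu=n\,(A+B\alpha+C\beta),\]
and the right-hand side equals $\frac{ns}{b-1}+\frac{nt}{b-x}$, because $s,t,x$ are defined precisely so that $s+t=1$, $s+tx=\alpha$, $s+tx^2=\beta$ (a one-line check, easiest after noting $1-2\alpha+\beta=(1-\alpha)^2+(\beta-\alpha^2)$): then $A+B\alpha+C\beta=s\,(A+B+C)+t\,(A+Bx+Cx^2)=s\,q(1)+t\,q(x)=\frac{s}{b-1}+\frac{t}{b-x}$, whatever the coefficients of $q$ happen to be.

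It remains to construct $q$. Set $\kappa:=\dfrac{1}{(b-1)(b-x)^2}$, which is positive since $0\le x<1<b$, and define
\[q(\lambda):=\frac{1+\kappa\,(\lambda-x)^2(1-\lambda)}{b-\lambda}.\]
At $\lambda=b$ the numerator equals $1+\kappa(b-x)^2(1-b)=1-1=0$, so $b-\lambda$ divides it and $q$ is a genuine polynomial, of degree $2$. Moreover $q(1)=\frac1{b-1}$ and $q(x)=\frac1{b-x}$, since the numerator equals $1$ at $\lambda=1$ and at $\lambda=x$. Finally, for $\lambda\in[0,1]$ we have $(b-\lambda)\,q(\lambda)=1+\kappa(\lambda-x)^2(1-\lambda)\ge1$, using $\kappa>0$, $(\lambda-x)^2\ge0$ and $1-\lambda\ge0$; dividing by $b-\lambda>0$ gives $q\ge g$ on $[0,1]$, as needed. (The value of $\kappa$ is exactly what lets a double zero at $x$, a zero at $1$, and the cancellation at $b$ coexist; it is the divided difference $g[x,x,1]=\tfrac12 g''(\xi)$ for some $\xi\in(x,1)$, which is why it is positive --- a reflection of the convexity of $g$.)

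The rest is bookkeeping rather than difficulty: one checks $x=\frac{\alpha-\beta}{1-\alpha}\in[0,1)$ and $s,t\ge0$ from the standing inequalities $\alpha^2\le\beta\le\alpha$ (for $x<1$, use $2\alpha-\beta\le2\alpha-\alpha^2<1$ when $\alpha<1$), so that $b-1$, $b-x$ and every $b-\lambda_i$ are positive, and one verifies the three moment identities used above. I do not expect a genuine obstacle: the only ``idea'' is to guess that a quadratic majorant of $g$ touching at $\{x,1\}$ should exist and to observe that its leading coefficient $\kappa$ is automatically positive. As consistency checks, $\beta=\alpha^2$ forces all $\lambda_i=\alpha$, giving $x=\alpha$, $s=0$, $t=1$, and the bound becomes an equality; $\beta=\alpha$ forces all $\lambda_i\in\{0,1\}$, giving $x=0$, $s=\alpha$, $t=1-\alpha$, and the bound reduces to Lemma~\ref{HM}; the degenerate case $\alpha=1$ makes $1-2\alpha+\beta=0$, so $s,t$ are undefined and that case is tacitly excluded.
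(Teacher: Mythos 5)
Your proof is correct, and it takes a genuinely different route from the paper. The paper argues variationally: it assumes a maximizer of $\varphi$ over the constraint set, shows by a three-point perturbation that at most two values other than $1$ can occur in a maximizer, relaxes the integer multiplicities to reals, and then carries out an explicit (and somewhat tedious) optimization in the remaining parameters, finding that the extremal configuration is the two-point distribution with mass $ns$ at $1$ and $nt$ at $x$. You instead prove the bound by duality for the truncated moment problem: you exhibit the explicit quadratic majorant $q(\lambda)=\bigl(1+\kappa(\lambda-x)^2(1-\lambda)\bigr)/(b-\lambda)$ with $\kappa=1/\bigl((b-1)(b-x)^2\bigr)$, i.e.\ the Hermite interpolant of $g(\lambda)=1/(b-\lambda)$ at the nodes $\{x,x,1\}$, whose leading coefficient is the positive divided difference $g[x,x,1]$, and then evaluate $\sum_i q(\lambda_i)$ using only the two moment constraints together with the identities $s+t=1$, $s+tx=\alpha$, $s+tx^2=\beta$ (which I checked; they do hold with the paper's $s,t,x$). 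Your argument buys several things: it needs no existence-of-maximizer or perturbation analysis, no relaxation of multiplicities, and it makes the equality cases transparent (contact of $q$ with $g$ exactly at $x$ and $1$, matching the paper's extremal configuration); the paper's variational route, on the other hand, is the one that \emph{discovers} the extremal two-point support rather than assuming it, which is its main expository advantage. The degenerate case $\alpha=1$ (where $s,t,x$ are $0/0$) is excluded in both treatments, and your side remarks on $x\in[0,1)$ and $s,t\ge 0$ via $\alpha^2\le\beta\le\alpha$ are the right bookkeeping. I see no gap.
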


We relegate the proof of this fact, which is elementary, but tedious, to the appendix. This lemma allows us to prove a strong restricted invertibility result, which shows that one can get well conditioned principal submatrices of size right up to the modified stable rank.

\begin{theorem}\label{KasTza}
Let $A \in M_n(\mathbb{C})$ be a positive contraction and let $B = I-A$. Then, for any $c \leq \dfrac{\operatorname{tr}(B)^2}{\operatorname{tr}(B^2)}$, written as $c = \delta \dfrac{\operatorname{tr}(B)^2}{\operatorname{tr}(B^2)}$ for some $0 \leq \delta \leq 1$, there is a principal submatrix $A_S$ of size $cn$ such that 
\[||A_S|| \leq 1- \operatorname{tr}(B)\left(\sqrt{1-c}-\sqrt{\delta- c}\right)^2\]
\end{theorem}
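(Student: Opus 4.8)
The plan is to combine the interlacing/barrier machinery of Sections 2--3 with the refined potential bound of Lemma~\ref{lemma:associativity}, applied not to $A$ but to $B = I-A$. First I would observe that $B$ is a positive contraction whose characteristic polynomial $\chi[B]$ has all roots in $[0,1]$, and that controlling $\lambda_{\max}(B_S)$ from below is the same as controlling $\lambda_{\min}(A_S) = 1 - \lambda_{\max}(B_S)$ — wait, that is the wrong direction; what we actually want is an \emph{upper} bound on $\|A_S\| = \lambda_{\max}(A_S) = 1 - \lambda_{\min}(B_S)$, so we need a \emph{lower} bound on $\lambda_{\min}(B_S)$. I would therefore run the whole argument on $-B$ (equivalently, track the \emph{smallest} root): by Theorem~\ref{inter high} applied to $-B$, there is a principal submatrix of size $m = cn$ whose smallest eigenvalue is at least the smallest root of $\chi^{(n-m)}[-B]$, i.e.\ at least $1 - (\text{largest root of }\chi^{(n-m)}[\widetilde B])$ where $\widetilde B$ has roots $1-\lambda_i(B)$; since the $\lambda_i(B)$ lie in $[0,1]$ so do the $1-\lambda_i(B)$, so I may as well just phrase everything as a max-root bound for the $(n-m)$-th derivative of a polynomial with roots in $[0,1]$ and then subtract from $1$ at the end.

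So the core computation is: let $q$ be a real-rooted polynomial of degree $n$ with roots $\mu_i \in [0,1]$, $\sum \mu_i = n\alpha$, $\sum \mu_i^2 = n\beta$, where here $\alpha = \operatorname{tr}(B)$ and $\beta = \operatorname{tr}(B^2)$. I want an upper bound on $\operatorname{maxroot} q^{((1-c)n)}$, equivalently (writing the derivative order as $kn$ with $k = 1-c$) I apply Proposition~\ref{bar}: for every $b > 1$,
\[
\operatorname{maxroot} q^{((1-c)n)} \leq b - \frac{(1-c)n}{\varphi(b)}, \qquad \varphi(b) = \sum \frac{1}{b-\mu_i}.
\]
By Lemma~\ref{lemma:associativity}, $\varphi(b) \leq \frac{ns}{b-1} + \frac{nt}{b-x}$ with $x = \frac{\alpha-\beta}{1-\alpha}$, $s = \frac{\beta-\alpha^2}{1-2\alpha+\beta}$, $t = \frac{(1-\alpha)^2}{1-2\alpha+\beta}$. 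Substituting this into the bound (the direction is right because $\varphi$ appears in the denominator with a minus sign, so replacing $\varphi$ by a larger quantity only increases the bound — I would double-check this monotonicity carefully since it is the one genuinely error-prone sign in the argument) and then optimizing over $b > 1$: this is exactly the shape handled by Lemma~\ref{alpha} once I normalize $s+t=1$, except that Lemma~\ref{alpha} is stated for a potential of the form $\frac{\alpha n}{b-1} + \frac{(1-\alpha)n}{b-x}$, so I would invoke it with ``$\alpha$'' there replaced by $s$ and ``$c$'' by $1-c$. Lemma~\ref{alpha} then gives minimum value
\[
x + (1-x)\left(\sqrt{(1-s)c} + \sqrt{s(1-c)}\right)^2 = x + (1-x)\left(\sqrt{tc} + \sqrt{s(1-c)}\right)^2.
\]

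The last step is bookkeeping: plug in the explicit $s,t,x$ in terms of $\alpha,\beta$, use $\delta = c\,\beta/\alpha^2$ (so $c = \delta\alpha^2/\beta$), and simplify. I expect the algebra to collapse: $1-x = \frac{1-2\alpha+\beta}{1-\alpha}$, and $(1-x)\cdot s = \frac{\beta-\alpha^2}{1-\alpha}$, $(1-x)\cdot t = 1-\alpha$; one should find $x + (1-x)(\sqrt{tc}+\sqrt{s(1-c)})^2 = 1 - \alpha\big(\sqrt{1-c} - \sqrt{\delta-c}\big)^2$ after recognizing $\beta - \alpha^2 = \alpha^2(1/\delta \cdot c \cdot \frac{1}{c} \cdots)$ — more precisely using $\beta = \alpha^2/\delta \cdot (c/c)$... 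I would organize this by writing everything over the common quantity $\alpha$ and checking that the cross term $2\sqrt{tc}\sqrt{s(1-c)}$ produces the $-2\alpha\sqrt{1-c}\sqrt{\delta-c}$ term. Then $\|A_S\| = 1 - \lambda_{\min}(B_S) \leq 1 - (1 - [\text{that bound}]) $; being careful, since $\lambda_{\min}(B_S) \geq 1 - \operatorname{maxroot} q^{((1-c)n)}$ where $q$ is built from the eigenvalues $1-\mu_i$ — actually cleanest is: apply Theorem~\ref{inter high} to $A$ itself with $m = cn$ to get $\|A_S\| = \lambda_{\max}(A_S) \leq \operatorname{maxroot}\chi^{((1-c)n)}[A]$, and then the polynomial $\chi[A]$ has roots $1 - \lambda_i(B) \in [0,1]$ with first two moment sums $n(1-\alpha)$ and $n(1-2\alpha+\beta)$; running the same optimization with these moments gives the stated bound directly, which avoids the $-B$ detour entirely. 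I would present it that way.

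\textbf{Main obstacle.} The only real content is Lemma~\ref{lemma:associativity} (relegated to the appendix) and the verification that its conclusion plugs into Lemma~\ref{alpha} with the substitution $\alpha \mapsto s$; the rest is the sign-monotonicity check on $b - (1-c)n/\varphi$ and a page of algebra reconciling $(s,t,x)$ with the target expression $1 - \operatorname{tr}(B)(\sqrt{1-c} - \sqrt{\delta-c})^2$. I expect no conceptual difficulty, only the risk of an algebra slip in that final simplification.
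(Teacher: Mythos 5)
Your plan as finally organized --- Theorem \ref{inter high} applied to $A$ itself with $m=cn$, Proposition \ref{bar} for the $(1-c)n$-th derivative, Lemma \ref{lemma:associativity} applied to the spectrum of $A$, then Lemma \ref{alpha} invoked with $\alpha\mapsto s$ and $c\mapsto 1-c$, and finally the substitution $c=\delta\,\operatorname{tr}(B)^2/\operatorname{tr}(B^2)$ --- is exactly the paper's proof of Theorem \ref{KasTza}, and the algebra does collapse to $1-\operatorname{tr}(B)\left(\sqrt{1-c}-\sqrt{\delta-c}\right)^2$ as you predict (your monotonicity check on $b-\tfrac{(1-c)n}{\varphi}$ is also the right and only sign issue). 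One small correction to your bookkeeping: the identities $1-x=\tfrac{1-2\alpha+\beta}{1-\alpha}$ and $(1-x)t=1-\alpha$ come from feeding $B$'s moments $(\alpha,\beta)$ into the majorization lemma, whereas on the route you rightly settle on (moments of $A$, namely $1-\alpha$ and $1-2\alpha+\beta$) one gets $x=\tfrac{\alpha-\beta}{\alpha}$, $s=\tfrac{\beta-\alpha^2}{\beta}=\tfrac{\delta-c}{\delta}$, $1-x=\tfrac{\delta\operatorname{tr}(B)}{c}$, which are the values the paper uses and which make the final cross-term simplification go through.
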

\begin{proof}
Combining proposition (\ref{bar}) and lemmas (\ref{alpha}) and (\ref{xest}), we see that there is a principal  submatrix $A_S$ of size $cn$ which satisfies, 
\begin{eqnarray}\label{com2}
\lambda_{max}(A_S) \leq x + (1-x)\left(\sqrt{(1-\alpha)c}+\sqrt{(1-c)\alpha}\right)^2,
\end{eqnarray}
where, 
\[x = \dfrac{\operatorname{tr}(A) - \operatorname{tr}(A^2)}{1-\operatorname{tr}(A)} = \dfrac{\operatorname{tr}(B) - \operatorname{tr}(B^2)}{\operatorname{tr}(B)},\]
and, 
\[\alpha = \dfrac{\operatorname{tr}(A^2) - \operatorname{tr}(A)^2}{1-2\operatorname{tr}(A)+\operatorname{tr}(A^2)}= \dfrac{\operatorname{tr}(B^2) - \operatorname{tr}(B)^2}{\operatorname{tr}(B^2)}.\] 
Also, note that this is strictly less than $1$ for $1-c > \alpha$ which reduces to $c < \dfrac{\operatorname{tr}(B)^2}{\operatorname{tr}(B^2)}$. We have that $c = \delta \dfrac{\operatorname{tr}(B)^2}{\operatorname{tr}(B^2)}$, which allows us to write, 
\[x = 1-\dfrac{\delta \operatorname{tr}(B)}{c}, \quad \alpha = \dfrac{\delta - c}{\delta}.\]
The expression in (\ref{com2}) simplifies to, 
\begin{eqnarray*}
&&1-\dfrac{\delta \operatorname{tr}(B)}{c} + \dfrac{\delta \operatorname{tr}(B)}{c} \left[\sqrt{\dfrac{c^2}{\delta}} + \sqrt{\dfrac{(1-c)(\delta-c)}{\delta}}\right]^2\\
&=& 1-\dfrac{\delta \operatorname{tr}(B)}{c} + \dfrac{\operatorname{tr}(B)}{c} \left[c^2 + (1-c)(\delta-c)+2c\sqrt{(1-c)(\delta-c)}\right]\\
&=& 1+\operatorname{tr}(B)\left[2c - 1 - \delta + 2\sqrt{(1-c)(\delta-c)}\right]\\
&=& 1- \operatorname{tr}(B)\left(\sqrt{1-c}-\sqrt{\delta- c}\right)^2
\end{eqnarray*}
\end{proof}

Working with $1-A$ in place of $A$ and using that $\lambda_{min}(A) = 1 - \lambda_{max}(1-A)$, we have the restricted invertibility principle, 
\begin{theorem}\label{BT}
Let $A \in M_n(\mathbb{C})$ be a positive contraction. Then, for any $c \leq \dfrac{\operatorname{tr}(A)^2}{\operatorname{tr}(A^2)}$, written as $c = \delta \dfrac{\operatorname{tr}(A)^2}{\operatorname{tr}(A^2)}$ for some $0 \leq \delta \leq 1$, there is a principal submatrix $A_S$ of size $cn$, which equals $\dfrac{\operatorname{Tr}(A)^2}{\operatorname{Tr}(A^2)}$, such that 
\[\lambda_{min}A_S \geq  \operatorname{tr}(A)\left(\sqrt{1-c}-\sqrt{\delta- c}\right)^2\]
\end{theorem}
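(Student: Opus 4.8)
The plan is to deduce Theorem~\ref{BT} from Theorem~\ref{KasTza} by the standard complementation trick, replacing $A$ by its ``defect'' $I-A$ and reading off the resulting norm bound as an eigenvalue bound.

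First I would set $A' := I - A$. Since $A$ is a positive contraction, i.e. $0 \le A \le I$, so is $A'$, and $I - A' = A$; thus in the notation of Theorem~\ref{KasTza} applied to the positive contraction $A'$, the relevant matrix ``$B$'' is exactly $A$. Consequently the modified-stable-rank quantity $\operatorname{tr}(B)^2/\operatorname{tr}(B^2)$ appearing in that theorem equals $\operatorname{tr}(A)^2/\operatorname{tr}(A^2)$, and for any $c = \delta\,\operatorname{tr}(A)^2/\operatorname{tr}(A^2)$ with $0\le\delta\le 1$ the theorem yields a principal submatrix $(A')_S = (I-A)_S$ of size $cn$ with
\[\|(I-A)_S\| \;\le\; 1 - \operatorname{tr}(A)\left(\sqrt{1-c}-\sqrt{\delta-c}\right)^2.\]

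Second I would record the elementary spectral identities that turn this into the desired lower bound on $\lambda_{\min}$. Compressing to the coordinate subspace indexed by $S$ is a linear operation fixing the identity, so $(I-A)_S = I - A_S$, where $A_S$ is the principal submatrix of $A$ on the same index set. Since $A_S$ is a compression of the positive contraction $A$, we have $0 \le A_S \le I$, hence $I - A_S$ is again a positive contraction and $\|I - A_S\| = \lambda_{\max}(I-A_S) = 1 - \lambda_{\min}(A_S)$. Plugging this into the displayed inequality gives $1 - \lambda_{\min}(A_S) \le 1 - \operatorname{tr}(A)(\sqrt{1-c}-\sqrt{\delta-c})^2$, i.e. the claimed bound; and the size is $cn$ as required (in particular $\delta=1$ gives a submatrix whose size $cn$ equals the unnormalized quantity $\operatorname{Tr}(A)^2/\operatorname{Tr}(A^2)$).

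There is essentially no hard step: once Theorem~\ref{KasTza} is available the argument is pure bookkeeping. The only points needing (minor) care are that the hypothesis ``positive contraction'' is preserved both under $A \mapsto I-A$ and under passing to a principal submatrix — both immediate from $0 \le A \le I$ — which is what legitimizes the passage from $\|(I-A)_S\|$ to $1-\lambda_{\min}(A_S)$; and matching the roles of the two traces so that the constraint $c \le \operatorname{tr}(A)^2/\operatorname{tr}(A^2)$ and the constant $\operatorname{tr}(A)$ in the final bound come out correctly. I would also note, as the paper does, that the construction is algorithmic, since the submatrix furnished by Theorem~\ref{KasTza} (hence the one here, applied to $I-A$) is produced by the sequential defect-$1$ procedure of Section~2, equivalently the $\operatorname{smax}_\varphi$-minimizing algorithm of Section~3.
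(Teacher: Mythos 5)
Your proposal is correct and is exactly the paper's own argument: the paper deduces Theorem~\ref{BT} from Theorem~\ref{KasTza} by replacing $A$ with $I-A$ (so that the matrix $B$ there becomes $A$) and using $\lambda_{\min}(A_S) = 1 - \lambda_{\max}\bigl((I-A)_S\bigr)$, which is precisely your complementation bookkeeping. No further comment is needed.
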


Following the simple argument in the last paragraph of the introduction, we have the following version of the restricted invertibility theorem, 
\begin{theorem}
Let $T:  \mathbb{C}^m \rightarrow \mathbb{C}^n$ be a linear operator. Then, for any $0 \leq \delta \leq 1$, there is a subset $\sigma$  of size $|\sigma|  = \delta \dfrac{||T||_2^4}{||T||_4^4}$ and such that, letting $c = \dfrac{|\sigma|}{m}$, we have,
\[s_{min}(T\mid_{ P_{\sigma}\mathbb{C}^m}) \geq  \dfrac{||T||_2}{\sqrt{m}}\left[\sqrt{1-c}-\sqrt{\delta-c}\right].\]
\end{theorem}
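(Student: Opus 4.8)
The plan is to deduce this from the restricted invertibility principle for positive contractions, Theorem~\ref{BT}, by passing to $T^{*}T$ after a harmless normalization. \textbf{Step 1 (normalize and reduce).} Replace $T$ by $\widetilde T = T/\|T\|$, so that $\|\widetilde T\|=1$ and $A := \widetilde T^{*}\widetilde T \in M_m(\mathbb{C})$ is a positive contraction. Every singular value of $\widetilde T$ is a singular value of $T$ divided by $\|T\|$, and for any coordinate set $\sigma\subset[m]$ the restriction $\widetilde T|_{P_\sigma\mathbb{C}^m}$ has $s_{\min}$ equal to $s_{\min}(T|_{P_\sigma\mathbb{C}^m})/\|T\|$; hence it suffices to prove the stated bound for $\widetilde T$ with the right-hand side divided by $\|T\|$. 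I will also use the standard identification: the principal submatrix $A_\sigma$ of $A=\widetilde T^{*}\widetilde T$ on the index set $\sigma$ is exactly the Gram matrix of the columns $\{\widetilde T e_i : i\in\sigma\}$, so $\lambda_{\min}(A_\sigma)=s_{\min}(\widetilde T|_{P_\sigma\mathbb{C}^m})^2$.

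\textbf{Step 2 (identify the modified stable rank).} Using $\operatorname{Tr}\bigl((\widetilde T^{*}\widetilde T)^{k}\bigr)=\sum_i s_i(\widetilde T)^{2k}$, compute with the normalized trace that $\operatorname{tr}(A)=\|\widetilde T\|_2^2/m=\|T\|_2^2/(m\|T\|^2)$ and $\operatorname{tr}(A^2)=\|\widetilde T\|_4^4/m=\|T\|_4^4/(m\|T\|^4)$, so that
\[ \frac{\operatorname{tr}(A)^2}{\operatorname{tr}(A^2)}=\frac{1}{m}\,\frac{\|T\|_2^4}{\|T\|_4^4}. \]
Consequently a principal submatrix of $A$ of size $cm$ with $cm=\delta\,\dfrac{\operatorname{tr}(A)^2}{\operatorname{tr}(A^2)}\cdot m$ has exactly $\delta\,\|T\|_2^4/\|T\|_4^4$ rows and columns, which is the size $|\sigma|$ claimed, and $c=|\sigma|/m=\delta\,\operatorname{tr}(A)^2/\operatorname{tr}(A^2)$ matches the quantity called $c$ in Theorem~\ref{BT}. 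Note also that $\operatorname{tr}(A)^2\le\operatorname{tr}(A^2)$ by Cauchy--Schwarz, so $c\le\delta\le1$ and in particular $\delta-c\ge0$, so the right-hand side of the claimed inequality is real.

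\textbf{Step 3 (apply Theorem~\ref{BT} and undo the normalization).} Apply Theorem~\ref{BT} to the positive contraction $A\in M_m(\mathbb{C})$: for $c=\delta\,\operatorname{tr}(A)^2/\operatorname{tr}(A^2)$ there is a set $\sigma\subset[m]$ with $|\sigma|=cm=\delta\,\|T\|_2^4/\|T\|_4^4$ such that $\lambda_{\min}(A_\sigma)\ge\operatorname{tr}(A)\bigl(\sqrt{1-c}-\sqrt{\delta-c}\bigr)^2$. By Step 1, $\lambda_{\min}(A_\sigma)=s_{\min}(\widetilde T|_{P_\sigma\mathbb{C}^m})^2$; taking square roots and multiplying by $\|T\|$ gives
\[ s_{\min}(T|_{P_\sigma\mathbb{C}^m})\ge\|T\|\sqrt{\operatorname{tr}(A)}\,\bigl(\sqrt{1-c}-\sqrt{\delta-c}\bigr)=\frac{\|T\|_2}{\sqrt m}\,\bigl(\sqrt{1-c}-\sqrt{\delta-c}\bigr), \]
where the last equality uses $\|T\|\sqrt{\operatorname{tr}(A)}=\|T\|\cdot\|T\|_2/(\sqrt m\,\|T\|)=\|T\|_2/\sqrt m$. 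This is precisely the assertion.

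\textbf{Main obstacle.} There is no substantive mathematical difficulty here once Theorem~\ref{BT} is available; the whole content is the reduction, and the only thing requiring care is the bookkeeping: carrying the factor $\|T\|$ through all the singular values, respecting the normalized-trace convention so that the modified stable rank of $A$ picks up the extra $1/m$ (so $|\sigma|=\delta\|T\|_2^4/\|T\|_4^4$ rather than $m$ times that), correctly identifying the principal submatrix of $T^{*}T$ on $\sigma$ with the Gram matrix of the selected columns of $T$, and the same integrality rounding of $cn$ that is tacit throughout the paper.
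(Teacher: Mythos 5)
Your proposal is correct and is essentially the paper's own argument: the paper deduces this theorem by applying Theorem~\ref{BT} (equivalently Theorem~\ref{RI}) to $T^{*}T$, identifying $\lambda_{\min}$ of the principal submatrix of $T^{*}T$ on $\sigma$ with $s_{\min}(T\mid_{P_{\sigma}\mathbb{C}^m})^2$ and using $\operatorname{Tr}(T^{*}T)=\|T\|_2^2$, $\operatorname{Tr}((T^{*}T)^2)=\|T\|_4^4$. Your explicit normalization $\widetilde T = T/\|T\|$ (justified by scale invariance of the claim) and the trace bookkeeping are exactly the details the paper leaves tacit.
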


   \section{Majorization relations for polynomial roots}

 We start off with a result proved by Pereira in 2005 \cite{PerKat} and conjectured by Katsoprinakis in the 1980's \cite{Kat}. The result also appears in the contemporaneous work of Malamud \cite{Mal05} on closely related problems. Recall that a real sequence $\overline{\mu}$ is majorized by a real sequence $\overline{\lambda}$(of the same size), which we will denote $\overline{\mu} \prec \overline{\lambda}$ if there is a doubly stochastic map $D$ such that $D\overline{\lambda} = \overline{\mu}$. Here, a doubly stoachastic map is a matrix of non-negative reals with all row and column sums $1$. It is a classical fact that Majorization can also be expressed in terms of convex maps, in the following way, see \cite{PerKat}[Prop. 4.2], 
   \begin{theorem}\label{conv}
   Let $\overline{\mu} = (\mu_1, \cdots, \mu_n)$ and $ \overline{\lambda} = (\lambda_1, \cdots, \lambda_n)$ be two real sequences. Then, the following are equivalent, 
   \begin{enumerate}
   \item $\overline{\mu} \prec \overline{\lambda}$
   \item For every convex function $f$ defined on an interval containing both  $\overline{\lambda} $ and $\overline{\mu}$, we have that,
   \[\sum f(\mu_i) \leq \sum f(\lambda_i).\]
   \end{enumerate}
   \end{theorem}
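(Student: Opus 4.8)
The plan is to prove the two implications separately. The direction $(1)\Rightarrow(2)$ is a one-line application of Jensen's inequality, and the substance is in $(2)\Rightarrow(1)$, which is the Hardy--Littlewood--P\'olya theorem; I would extract the classical partial-sum majorization inequalities from the convexity hypothesis and then build the doubly stochastic matrix by a sequence of elementary transfers.

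For $(1)\Rightarrow(2)$: suppose $\overline{\mu} = D\overline{\lambda}$ with $D = (D_{ij})$ doubly stochastic. Each $\mu_i = \sum_j D_{ij}\lambda_j$ is then a convex combination of the $\lambda_j$, so for convex $f$ defined on an interval containing all these points, Jensen gives $f(\mu_i) \le \sum_j D_{ij} f(\lambda_j)$. Summing over $i$ and using that the columns of $D$ sum to $1$, $\sum_i f(\mu_i) \le \sum_j f(\lambda_j)\sum_i D_{ij} = \sum_j f(\lambda_j)$.

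For $(2)\Rightarrow(1)$: first I would extract the standard majorization data. Testing (2) with the affine (hence convex) functions $f(x) = x$ and $f(x) = -x$ yields $\sum_i \mu_i = \sum_i \lambda_i$. Testing with the convex functions $f_t(x) = (x-t)_+ := \max(x-t,0)$ for each real $t$ yields $\sum_i (\mu_i - t)_+ \le \sum_i (\lambda_i - t)_+$. Combining this with the elementary identity $\sum_{i=1}^k x_{[i]} = \min_{t\in\mathbb{R}}\bigl(kt + \sum_{i=1}^n (x_i - t)_+\bigr)$ (here $x_{[1]}\ge x_{[2]}\ge\cdots$ is the decreasing rearrangement), applied with $x = \overline{\mu}$ on the left and evaluated at the minimizing $t$ for $\overline\lambda$ on the right, gives $\sum_{i=1}^k \mu_{[i]} \le \sum_{i=1}^k \lambda_{[i]}$ for every $k \le n$, with equality at $k=n$. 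It then remains to produce a doubly stochastic $D$ with $D\overline{\lambda} = \overline{\mu}$ from these inequalities. I would do this by a ``Robin Hood'' / $T$-transform induction: after permuting, assume both sequences are listed in decreasing order; if $\overline{\mu}\neq\overline{\lambda}$, choose the largest $j$ with $\mu_j < \lambda_j$ and the largest $i<j$ with $\mu_i > \lambda_i$ (both exist by the partial-sum constraints together with the equal total), and apply the $T$-transform that transfers mass between coordinates $i$ and $j$ of $\overline{\lambda}$ until one of the two coordinates meets the corresponding entry of $\overline{\mu}$. Such a transform is a doubly stochastic matrix --- the identity outside a $2\times2$ block of the form $\bigl(\begin{smallmatrix}1-\theta&\theta\\\theta&1-\theta\end{smallmatrix}\bigr)$ --- its image still majorizes $\overline{\mu}$, and it agrees with $\overline{\mu}$ in strictly more coordinates; after at most $n-1$ such steps we reach $\overline{\mu}$, and the composition of the $T$-transforms is the desired $D$ (a product of doubly stochastic matrices is doubly stochastic).

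The main obstacle is the final step: one must check carefully that a single $T$-transform can always be chosen so that the new sequence still lies above $\overline{\mu}$ in the majorization order \emph{and} a strict-progress quantity (the number of agreeing coordinates, or $\sum_i|\mu_i-\lambda_i|$) improves, so that the procedure terminates. The bookkeeping with decreasing rearrangements and the partial-sum inequalities under a transfer is the only delicate part; everything else is routine. Alternatively one can sidestep the explicit construction by invoking Birkhoff's theorem together with a separating-hyperplane argument identifying $\{\overline{\mu}:\overline{\mu}\prec\overline{\lambda}\}$ with the convex hull of the permutations of $\overline{\lambda}$ and with the polyhedron cut out by condition (2), but the $T$-transform route is the most self-contained.
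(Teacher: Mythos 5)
The paper gives no proof of this theorem at all: it is stated as ``a classical fact'' and attributed to Pereira \cite{PerKat}[Prop.~4.2], so there is nothing in the text to compare your argument against. What you have written is a self-contained sketch of the standard Hardy--Littlewood--P\'olya / Karamata proof, and it is essentially sound: the $(1)\Rightarrow(2)$ direction via Jensen together with the column-sum normalization is correct as stated, and in $(2)\Rightarrow(1)$ the reduction to the partial-sum inequalities by testing with $\pm x$ and the hinge functions $(x-t)_+$, together with the variational identity $\sum_{i=1}^k x_{[i]} = \min_t\bigl(kt + \sum_i (x_i-t)_+\bigr)$, is a clean and correct way to extract the majorization order; the closing $T$-transform step is likewise the standard Muirhead/Marshall--Olkin construction.

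One small point worth correcting in the last step: with both sequences sorted in decreasing order, the partial-sum inequalities $\sum_{i\le k}\mu_{[i]}\le\sum_{i\le k}\lambda_{[i]}$ force the excess of $\overline\lambda$ to sit at \emph{low} indices and the excess of $\overline\mu$ at \emph{high} indices, so the pair you transfer between should be an $i<j$ with $\lambda_i>\mu_i$ and $\lambda_j<\mu_j$ --- your prescription (``largest $j$ with $\mu_j<\lambda_j$ and largest $i<j$ with $\mu_i>\lambda_i$'') has the roles reversed and can fail to produce a valid pair in examples as small as $\overline\lambda=(3,0)$, $\overline\mu=(2,1)$. Taking instead (say) the smallest $i$ with $\lambda_i>\mu_i$ and the smallest $j>i$ with $\lambda_j<\mu_j$, and transferring the amount $\min(\lambda_i-\mu_i,\mu_j-\lambda_j)$, gives the strict-progress quantity (number of agreeing coordinates) you already identify, and the induction closes. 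With that adjustment your proof is a complete and more elementary treatment than the paper's bare citation; the Birkhoff-plus-separating-hyperplane alternative you mention would also do, but is heavier machinery than needed here.
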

   
   Given a polynomial $p$ with roots $(\lambda_1, \cdots, \lambda_n)$, we will use the notation  $R(p)$ to denote the monic polynomial whose roots are  $(\operatorname{Re}\,\lambda_1, \cdots, \operatorname{Re}\, \lambda_n)$. The following was conjectured by Katsoprinakis\cite{Kat} and proved 20 years later by Pereira \cite{PerKat}[Theorem 4.6] and independently by Malamud \cite{Mal05},
   \begin{theorem}[Pereira]\label{Per}
  Given a polynomial $p$, we have, 
   \[\sigma \left(R(p')\right) \prec \sigma \left(R(p)' \right).\]
   \end{theorem}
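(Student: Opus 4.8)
The plan is to realise both sides of the claimed majorization as spectra attached to a single $(n-1)\times(n-1)$ matrix and to its Hermitian part, and then to invoke the classical Schur--Ky Fan majorization theorem. Write $p(x)=\prod_{j=1}^{n}(x-\lambda_j)$, set $D=\operatorname{diag}(\lambda_1,\dots,\lambda_n)$, let $e=\tfrac{1}{\sqrt{n}}(1,\dots,1)$, and let $P$ be the orthogonal projection onto $e^{\perp}$. The first step is the compression identity
\[
\chi[\,PAP\!\mid_{e^{\perp}}\,](x)=\chi[A](x)\,\bigl\langle e,(xI-A)^{-1}e\bigr\rangle ,
\]
valid for every $A\in M_n(\mathbb{C})$ and for any unit vector in place of $e$. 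I would prove it exactly as the analogous identity for defect-$1$ coordinate submatrices used above, just with $e_i$ replaced by $e$: since $(xI-A)^{-1}=\operatorname{adj}(xI-A)/\det(xI-A)$, the left side equals $\langle e,\operatorname{adj}(xI-A)e\rangle$, which, after completing $e$ to an orthonormal basis, is the $(1,1)$ cofactor of $xI-A$, i.e.\ the characteristic polynomial of the compression of $A$ to $e^{\perp}$; degeneracies (repeated $\lambda_j$, or $x$ a spectral value) cost nothing since both sides are polynomials and the identity need only be checked off $\sigma(A)$.

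Applying this with $A=D$ and $\bigl\langle e,(xI-D)^{-1}e\bigr\rangle=\tfrac{1}{n}\sum_j\frac{1}{x-\lambda_j}=\frac{p'(x)}{n\,p(x)}$ gives $\chi[\,PDP\!\mid_{e^{\perp}}\,](x)=\tfrac{1}{n}p'(x)$; hence the roots of $p'$ are precisely the eigenvalues of $M:=PDP\!\mid_{e^{\perp}}$, so $\sigma(R(p'))$ is the vector of real parts of the eigenvalues of $M$. Applying the identity instead with $A=\operatorname{Re}(D)=\operatorname{diag}(\operatorname{Re}\lambda_1,\dots,\operatorname{Re}\lambda_n)$, whose spectrum is $\sigma(R(p))$, gives $\chi[\,P\operatorname{Re}(D)P\!\mid_{e^{\perp}}\,](x)=\tfrac{1}{n}R(p)'(x)$, so $\sigma(R(p)')$ is the spectrum of $P\operatorname{Re}(D)P\!\mid_{e^{\perp}}$. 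Since $P$ is a real symmetric projection, $P\operatorname{Re}(D)P\!\mid_{e^{\perp}}=\tfrac{1}{2}(M+M^{*})=\operatorname{Re}(M)$.

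The remaining ingredient is the Schur--Ky Fan theorem: for any $M\in M_m(\mathbb{C})$, the vector of real parts of the eigenvalues of $M$ is majorized by the spectrum of $\operatorname{Re}(M)=\tfrac{1}{2}(M+M^{*})$. I would prove this in two lines from a Schur triangularization $M=UTU^{*}$ with $T$ upper triangular and $\operatorname{diag}(T)$ the list of eigenvalues of $M$: then $\operatorname{Re}(M)=U\operatorname{Re}(T)U^{*}$ is Hermitian with diagonal equal to the real parts of those eigenvalues, and the Schur--Horn theorem (the diagonal of a Hermitian matrix is majorized by its spectrum) finishes it. Taking $m=n-1$ and $M$ as above yields $\sigma(R(p'))\prec\sigma(R(p)')$, which is Theorem \ref{Per}. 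As a fallback one could instead try to verify the inequalities $\sum f(\operatorname{Re}\mu_i)\le\sum f(\nu_j)$ for all convex $f$ via Theorem \ref{conv}, but I expect that direct route to be considerably messier.

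The only step carrying genuine content is the compression identity together with the choice $e=\tfrac{1}{\sqrt{n}}(1,\dots,1)$: that particular vector is exactly what turns ``$\tfrac{1}{n}$ times the derivative'' into a compression of a normal matrix and, crucially, is fixed by complex conjugation, so the whole construction commutes with taking real parts. The one place to be careful is the degenerate cases, handled by the polynomial-identity remark above; beyond that everything reduces to two classical facts (the cofactor formula and Schur--Horn), so I anticipate no serious obstacle.
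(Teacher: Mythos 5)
Your argument is correct, but there is nothing in the paper to compare it against: Theorem \ref{Per} is not proved here, it is quoted from Pereira \cite{PerKat} (and Malamud \cite{Mal05}). What you have written is essentially a reconstruction of Pereira's own ``differentiator'' proof. The compression identity $\chi[PAP\mid_{e^{\perp}}](x)=\chi[A](x)\,\langle e,(xI-A)^{-1}e\rangle$ is the same cofactor computation the paper invokes for coordinate defect-$1$ submatrices; the choice $e=\tfrac{1}{\sqrt n}(1,\dots,1)$ makes the compression of $D=\operatorname{diag}(\lambda_1,\dots,\lambda_n)$ have characteristic polynomial $p'/n$ (this is precisely the trace-vector/differentiator device), and the same projection applied to $\operatorname{Re}(D)$ produces $R(p)'/n$; the final step is Ky Fan's classical majorization of the real parts of the eigenvalues of a matrix by the spectrum of its Hermitian part, which your Schur triangularization plus Schur (diagonal majorized by spectrum) argument establishes correctly, so the chain $\sigma(R(p'))\prec\sigma(R(p)')$ closes. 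Two small remarks. First, the step $P\operatorname{Re}(D)P\mid_{e^{\perp}}=\tfrac12(M+M^{*})$ uses only $P=P^{*}$ and $\operatorname{Re}(D)=\tfrac12(D+D^{*})$; any unit vector whose entries all have modulus $n^{-1/2}$ would work throughout, so the closing claim that conjugation-invariance of $e$ is the crucial feature overstates what is actually used. Second, you should state explicitly that the eigenvalues of $M$ are the roots of $p'$ \emph{with multiplicity}, because the two monic degree-$(n-1)$ polynomials coincide identically, and similarly for $\operatorname{Re}(M)$ and $R(p)'$; with that said, the proof is complete and supplies a self-contained argument for a result the paper only cites.
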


We would like to point another interesting relation between roots of real parts of polynomials and their derivatives. It will be convenient to use the notation $Dp$ to represent the derivative of $p$. The theorems of Pereira and Malamud show that, 
\begin{eqnarray}\label{PM}
\sigma(RD(p)) \prec \sigma(DR(p)).
\end{eqnarray}
We now show that there is an interesting extension for higher derivatives. 
\begin{theorem}
Let $p$ be a degree $n$ polynomial and let $k \leq n$. We then have a chain of majorization relations (between real sequences of size $n-k$), 
\[\sigma\left(RD^{(k)}(p)\right) \prec \sigma\left(DRD^{(k-1)}(p)\right) \prec \cdots \prec \sigma\left(D^{(k-1)}R(p)\right) \prec \sigma\left(D^{(k)}R(p)\right).\]
\end{theorem}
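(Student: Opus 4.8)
The plan is to reduce the whole chain to a single fact applied repeatedly: for any polynomial $q$, one has the majorization $\sigma(RD(q)) \prec \sigma(DR(q))$, which is exactly the Pereira--Malamud statement (\ref{PM}). Each consecutive pair in the claimed chain should be obtained by invoking this with a cleverly chosen $q$, together with two auxiliary principles — (i) majorization is preserved under applying the derivative operator, i.e. if $\sigma(f)\prec\sigma(g)$ for real-rooted $f,g$ of the same degree then $\sigma(f')\prec\sigma(g')$; and (ii) majorization is transitive, so a chain of pairwise relations gives the end-to-end relation. So the real content is the middle links, and the engine is (\ref{PM}) plus a ``differentiate the majorization'' lemma.

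Concretely, I would first record the commutation identity $R D^{(j)}(p)$ versus $D^{(j)} R(p)$: the two operators $R$ (take real parts of roots) and $D$ (differentiate) do \emph{not} commute, and the chain is precisely a quantitative measure of the failure. For the link from $\sigma(DRD^{(k-1)}(p))$ to $\sigma(RD^{(k)}(p))$: set $q = D^{(k-1)}(p)$, a polynomial of degree $n-k+1$; then $RD^{(k)}(p) = RD(q)$ and $DRD^{(k-1)}(p) = DR(q)$, so this link is (\ref{PM}) applied to $q$ verbatim. For a generic interior link, from $\sigma(D^{(j)}RD^{(k-j-1)}(p))$ down one step to $\sigma(D^{(j-1)}RD^{(k-j)}(p))$ — wait, the indices have to be read off the displayed chain carefully — the term $DRD^{(k-1)}$ should be compared to the \emph{next} term, which by the pattern is $D^2 R D^{(k-2)}$, and so on, each adjacent pair differing by moving one derivative from the right of $R$ to the left of $R$. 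The link between $\sigma(D^{j}RD^{(k-j)}(p))$ and $\sigma(D^{j-1}RD^{(k-j+1)}(p))$ I would get by: letting $r = D^{(k-j)}(p)$ of degree $n-k+j$; applying (\ref{PM}) to $r$ to get $\sigma(RD(r)) \prec \sigma(DR(r))$, i.e. $\sigma(RD^{(k-j+1)}(p)) \prec \sigma(DRD^{(k-j)}(p))$; then applying (i) above, differentiating both sides $j-1$ times, to obtain $\sigma(D^{j-1}RD^{(k-j+1)}(p)) \prec \sigma(D^{j}RD^{(k-j)}(p))$. Finally the last link $\sigma(D^{(k-1)}R(p)) \prec \sigma(D^{(k)}R(p))$ is just the elementary Gauss--Lucas-type fact that the roots of $f'$ are majorized by the roots of $f$ for a real-rooted $f$ (equivalently, differentiation shrinks in the majorization order), applied to $f = D^{(k-1)}R(p)$; this also subsumes (i)'s building block.

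The step I expect to be the main obstacle is justifying principle (i) — that $\sigma(f)\prec\sigma(g)$ implies $\sigma(f')\prec\sigma(g')$ — with the correct hypotheses, since majorization of root sequences is only well-behaved when both polynomials are real-rooted of equal degree, and one must check that every polynomial appearing in the chain ($R$ of something, or a derivative thereof) is genuinely real-rooted: $R(p)$ is real-rooted by construction, and $D$ preserves real-rootedness, so this is fine, but it needs to be stated. The cleanest route to (i) is via Theorem \ref{conv}: it suffices to show that for real-rooted $f$ of degree $m$, $\sigma(f')\prec\sigma(f)$ after noting both have the same mean — and then combine with the fact that $f\mapsto f'$ is, in a suitable sense, an averaging (doubly-stochastic-type) operation on the root multiset, or alternatively use that the roots of $f'$ lie in intervals between consecutive roots of $f$ (Rolle) to verify the Hardy--Littlewood--Pólya partial-sum inequalities directly. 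Once (i) is in hand, the proof is a bookkeeping exercise in peeling derivatives off the chain one link at a time and concatenating via transitivity; I would present it as an induction on $k$, where the inductive step supplies exactly one new link on each end and the Pereira--Malamud relation supplies the new middle comparison.
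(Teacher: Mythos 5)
Your overall architecture is the same as the paper's: each link of the chain is obtained by applying the Pereira--Malamud relation (\ref{PM}) to $D^{(k-j)}(p)$ and then pushing the remaining derivatives through the resulting majorization, with the chain assembled by transitivity; this is exactly how the paper proceeds. The genuine gap is in the step you yourself flag as the main obstacle, your principle (i): that $\sigma(f)\prec\sigma(g)$ implies $\sigma(Df)\prec\sigma(Dg)$ for real-rooted $f,g$ of equal degree. In the paper this is not proved but quoted: it is a special case of Borcea and Br\"and\'en's theorem on hyperbolicity preservers and majorization (\cite{BB10}, Theorem 1). Your proposed justification does not establish it. You claim it ``suffices to show $\sigma(f')\prec\sigma(f)$,'' but that is a different statement --- and, as standard majorization, not even well-posed, since $\sigma(f')$ and $\sigma(f)$ have lengths $m-1$ and $m$ --- and knowing how a single polynomial compares with its own derivative gives no monotonicity of the map taking the root multiset of $f$ to the root multiset of $f'$ with respect to the order $\prec$, which is what (i) asserts. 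Similarly, Rolle interlacing and the heuristic that differentiation is an ``averaging'' of the roots yield interlacing and compression facts, not preservation of $\prec$ between two \emph{different} polynomials; converting Pereira's differentiator picture into (i) is essentially the content of the Borcea--Br\"and\'en theorem and requires real work. To complete the argument you should invoke \cite{BB10} (as the paper does), or supply an actual proof of (i); deriving it from $\sigma(f')\prec\sigma(f)$ will not succeed.

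A smaller point: all terms of the chain should be read as $D^{j}RD^{(k-j)}(p)$, of degree $n-k$, so the penultimate term is $D^{(k-1)}RD(p)$; the literal $D^{(k-1)}R(p)$ appearing in the display has degree $n-k+1$. Consequently your separate ``elementary Gauss--Lucas-type'' treatment of the last link compares sequences of different sizes and is both ill-posed and unnecessary: your generic recipe with $j=k$ (apply (\ref{PM}) to $p$ itself and differentiate $k-1$ times) already yields the correct final link $\sigma(D^{(k-1)}RD(p))\prec\sigma(D^{(k)}R(p))$.
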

\begin{proof}
Applying (\ref{PM}) to the polynomial $D^{(k-1)}(p)$ yields that, 
\[\sigma\left(RDD^{(k-1)}(p)\right) = \sigma\left(RD^{(k)}(p)\right) \prec \sigma\left(DRD^{(k-1)}(p)\right).\]

Borcea and Branden in \cite{BB10}[Theorem 1] showed(this is a very special case of their theorem) that if $q$ and $p$ are real rooted polynomials, then $\sigma(q) \prec \sigma(p)$ implies that $\sigma(Dq) \prec \sigma(Dp)$. Applying this to the polynomials $RD^{(k-1)}(p)$ and  $DRD^{(k-2)}(p)$, we see, using (\ref{PM}) again that, 
\[ \sigma\left(DRD^{(k-1)}(p)\right) \prec \sigma\left(DDRD^{(k-2)}(p)\right) = \sigma\left(D^{(2)}RD^{(k-2)}(p)\right) .\]
Iterating this argument establishes the theorem. 
\end{proof}

We will only need one consequence of this theorem. 
\begin{corollary}\label{majrc}
Let $p$ be a degree $n$ polynomial and let $k \leq n$. Then, 
\[\lambda_{max} \left(RD^{(k)}(p)\right) \leq \lambda_{max} \left(D^{(k)}(R(p))\right), \quad \lambda_{min} \left(RD^{(k)}(p)\right) \geq \lambda_{min} \left(D^{(k)}(R(p))\right).\]
\end{corollary}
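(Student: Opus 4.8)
The plan is to deduce the corollary directly from the chain of majorization relations established in the preceding theorem, together with the elementary fact that majorization controls the extreme entries of a real sequence. First I would invoke that theorem to extract, in particular, the end-to-end relation
\[\sigma\!\left(RD^{(k)}(p)\right) \;\prec\; \sigma\!\left(D^{(k)}R(p)\right),\]
which follows from transitivity of majorization: if $\overline{\mu}\prec\overline{\nu}$ and $\overline{\nu}\prec\overline{\lambda}$ are witnessed by doubly stochastic maps $D_1$ and $D_2$, then $D_1D_2$ has nonnegative entries and all row and column sums equal to $1$ (products of doubly stochastic matrices are doubly stochastic), and it realizes $\overline{\mu}\prec\overline{\lambda}$. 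Chaining this through the $k$ steps of the theorem gives the displayed relation.

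Next I would use the definitional form of majorization. Set $\overline{\mu}=\sigma\!\left(RD^{(k)}(p)\right)$ and $\overline{\lambda}=\sigma\!\left(D^{(k)}R(p)\right)$, two real sequences of common length $n-k$, and let $D$ be a doubly stochastic matrix with $D\overline{\lambda}=\overline{\mu}$. Then each entry $\mu_i=\sum_j D_{ij}\lambda_j$ is a convex combination of the $\lambda_j$, since the weights $D_{ij}$ are nonnegative and sum to $1$ along each row; hence $\min_j \lambda_j \le \mu_i \le \max_j \lambda_j$ for every $i$. Taking the maximum, respectively the minimum, over $i$ yields $\lambda_{max}(\overline{\mu}) \le \lambda_{max}(\overline{\lambda})$ and $\lambda_{min}(\overline{\mu}) \ge \lambda_{min}(\overline{\lambda})$, which is precisely the assertion of the corollary. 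Alternatively one could apply Theorem \ref{conv} with the convex functions $f_t(x)=\max(x-t,0)$ and let $t$ run over the reals, but the direct convexity argument above is shorter.

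There is essentially no obstacle here: the corollary is a soft consequence of the theorem. The only points worth a word of care are that the two polynomials $RD^{(k)}(p)$ and $D^{(k)}R(p)$ genuinely have the same number of roots — both are monic of degree $n-k$, as $R$ preserves degree and each application of $D$ lowers it by one — so that the majorization relation, and with it the comparison of extreme roots, is meaningful; and that $\lambda_{max}$ and $\lambda_{min}$ of these real-rooted polynomials are literally the largest and smallest entries of the respective root sequences, so the inequalities on sequences transfer verbatim to the polynomials.
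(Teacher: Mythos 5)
Your proposal is correct and matches what the paper intends: the corollary is stated there as an immediate consequence of the majorization chain, and your argument (transitivity via products of doubly stochastic matrices, then the observation that each entry of the majorized sequence is a convex combination of the entries of the majorizing one, so the extremes are controlled) is exactly the standard fact being invoked. Your check that both $RD^{(k)}(p)$ and $D^{(k)}R(p)$ are monic of degree $n-k$ is a sensible detail the paper leaves implicit.
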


\section{A quantitative Gauss-Lucas theorem}
\begin{lemma}\label{rrs}
   Let $p$ be a real rooted polynomial of degree $n$. Then, for any $c \geq \frac{1}{2}$, we have, letting $|\sigma(p)| = \operatorname{\lambda}_{max}(p) - \operatorname{\lambda}_{min}(p)$, that,
\[|\sigma(p^{(cn)})| \leq 2\sqrt{c-c^2}\, |\sigma(p)|.\] 
   \end{lemma}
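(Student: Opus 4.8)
The statement to prove is Lemma \ref{rrs}: for a real rooted polynomial $p$ of degree $n$ and any $c \geq \frac{1}{2}$, one has $|\sigma(p^{(cn)})| \leq 2\sqrt{c-c^2}\,|\sigma(p)|$, where $|\sigma(q)|$ denotes $\lambda_{max}(q)-\lambda_{min}(q)$. The plan is to reduce this to Theorem \ref{rzerosh} by an affine normalization. First I would assume $p$ is non-constant with distinct extreme roots (otherwise there is nothing to prove, or the left side is $0$), and write $a = \lambda_{min}(p)$, $b = \lambda_{max}(p)$, so $|\sigma(p)| = b-a$. Apply the affine change of variable $x \mapsto \frac{2x - (a+b)}{b-a}$, which sends $[a,b]$ to $[-1,1]$; let $\tilde p$ be the resulting (rescaled) polynomial, whose roots all lie in $[-1,1]$. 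Differentiation commutes with affine substitution up to a nonzero scalar, so the roots of $\tilde p^{(cn)}$ are exactly the images $\frac{2\lambda - (a+b)}{b-a}$ of the roots $\lambda$ of $p^{(cn)}$; hence $|\sigma(\tilde p^{(cn)})| = \frac{2}{b-a}\,|\sigma(p^{(cn)})|$.

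The one hypothesis of Theorem \ref{rzerosh} not yet arranged is that the roots of $\tilde p$ sum to zero — i.e. that the average root $\alpha$ is $0$. To handle a general average I would appeal to the slightly more general Theorem \ref{mrr} (and its mirror image applied to $-p$): since the roots of $\tilde p$ lie in $[-1,1]$, after the further translation $x \mapsto \frac{x+1}{2}$ sending them into $[0,1]$ with average $\alpha' = \frac{\alpha+1}{2} \in [0,1]$, Theorem \ref{mrr} gives, for $c \geq \alpha'$, an upper bound $\big(\sqrt{(1-\alpha')(1-c)}+\sqrt{\alpha' c}\big)^2$ on $\operatorname{max\,root}$ of the $cn$-th derivative, and the analogous lower bound from $-p$. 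Subtracting the two bounds and translating back to $[-1,1]$, the difference $\lambda_{max}-\lambda_{min}$ of $\tilde p^{(cn)}$ is bounded by a quantity that one checks is maximized over $\alpha' \in [0,1]$ at $\alpha' = \frac12$ (equivalently $\alpha = 0$), where it equals $2\cdot 2\sqrt{c-c^2}/2 = 2\sqrt{c-c^2}$ times the length $2$ of $[-1,1]$; the condition $c \geq \frac12$ guarantees $c \geq \alpha'$ for every admissible $\alpha'$. Rescaling by $\frac{b-a}{2}$ then yields $|\sigma(p^{(cn)})| \leq 2\sqrt{c-c^2}\,(b-a)$, as claimed.

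The routine-but-slightly-delicate step — and the one I'd expect to be the main obstacle — is verifying that the two-sided estimate coming from Theorem \ref{mrr} (max root bound minus min root bound) is genuinely maximized at the symmetric case $\alpha = 0$, so that the clean constant $2\sqrt{c-c^2}$ is valid for \emph{all} real rooted $p$ regardless of where its roots sit inside $[a,b]$. Concretely, with roots of $\tilde p$ in $[-1,1]$ having averages $\alpha$ (for $p$) and $-\alpha$ (for $-p$, after reflection), the width bound is $1 + (\cdot) - \big(-1 - (\cdot)\big)$-type expression; one substitutes $\alpha' = \frac{1+\alpha}{2}$, differentiates in $\alpha$, and confirms concavity/symmetry so the max is at $\alpha=0$. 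Alternatively, and more cleanly, since $c \geq \tfrac12$ one may simply center: replace $p$ by $p(x + \alpha\cdot\frac{b-a}{2}+\frac{a+b}{2})$ suitably so that the normalized polynomial has root-average exactly $0$ — this is legitimate because translating the variable does not change $|\sigma(\cdot)|$ of any derivative — and then Theorem \ref{rzerosh} applies directly with no optimization over $\alpha$ needed. I would present this cleaner route: normalize by an affine map so that $\tilde p$ has roots in $[-1,1]$ \emph{and} average $0$ (possible since a translation is free and then a scaling puts the roots in $[-1,1]$ — here one uses that after centering, $|\lambda_i| \leq \frac{b-a}{2}$, which is exactly what makes the scaling factor $\frac{2}{b-a}$), invoke Theorem \ref{rzerosh} to get $\sigma(\tilde p^{(cn)}) \subset [-2\sqrt{c-c^2}, 2\sqrt{c-c^2}]$, hence $|\sigma(\tilde p^{(cn)})| \leq 4\sqrt{c-c^2}$, and finally undo the scaling to conclude $|\sigma(p^{(cn)})| \leq \frac{b-a}{2}\cdot 4\sqrt{c-c^2}\cdot\frac12 \cdot 2 = 2\sqrt{c-c^2}(b-a)$, matching the factor after careful bookkeeping of the normalization constants.
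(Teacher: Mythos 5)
There is a genuine gap in the route you say you ``would present,'' and a subtler error in the dismissal of the case analysis in the first route.

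The ``cleaner'' centering argument is wrong because after translating so that the root-average is $0$, the roots need \emph{not} lie in $\left[-\tfrac{b-a}{2},\tfrac{b-a}{2}\right]$; the claim ``after centering, $|\lambda_i|\le\frac{b-a}{2}$'' is false. Take, for instance, roots $\{0,0,0,1\}$: the average is $\tfrac14$, and after centering the roots are $\{-\tfrac14,-\tfrac14,-\tfrac14,\tfrac34\}$, with $\tfrac34>\tfrac12=\tfrac{b-a}{2}$. In general, after centering the roots lie in an interval of length $b-a$ that is not symmetric about $0$, so to force them into $[-1,1]$ with average $0$ you must scale by $M=\max(|\lambda_{\min}-\bar\lambda|,|\lambda_{\max}-\bar\lambda|)$, which can be as large as $b-a$. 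Then $|\sigma(\tilde p)|=\tfrac{b-a}{M}$ can be as small as $1$, whereas Theorem~\ref{rzerosh} only bounds $|\sigma(\tilde p^{(cn)})|$ by $4\sqrt{c-c^2}$; undoing the scaling gives $|\sigma(p^{(cn)})|\le 4\sqrt{c-c^2}\,M$, which is off by a factor of up to $2$ from the claimed $2\sqrt{c-c^2}\,(b-a)$. So Theorem~\ref{rzerosh} alone does not suffice --- the case $\alpha\ne0$ really does require separate treatment, which is exactly why the paper keeps $\alpha$ as a free parameter and optimizes.

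The assertion in your first route that ``$c\ge\tfrac12$ guarantees $c\ge\alpha'$ for every admissible $\alpha'$'' is also incorrect: $\alpha'=\tfrac{1+\alpha}{2}$ can be anything in $[0,1]$. This is precisely where the paper's proof splits into two cases. When $c\ge\tfrac{1-\alpha}{2}$ (taking $\alpha\le0$ WLOG), both the max-root bound from Theorem~\ref{mrr} applied to $p$ and the min-root bound from applying it to $-p$ are available, and their sum equals $4\sqrt{c(1-c)(1-\alpha^2)}\le 4\sqrt{c(1-c)}$. When instead $\tfrac{1+\alpha}{2}\le c<\tfrac{1-\alpha}{2}$, only the max-root bound is valid; the paper then uses the trivial bound $\lambda_{\min}(p^{(cn)})\ge -1$ (Gauss--Lucas), obtaining $\left(\sqrt{(1-\alpha)(1-c)}+\sqrt{(1+\alpha)c}\right)^2\le 8c(1-c)$, and checks that this is still at most $4\sqrt{c(1-c)}$. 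You flagged this optimization over $\alpha$ as the ``routine-but-slightly-delicate step'' but did not actually carry it out, and without the case split the argument does not close. To fix the proposal, drop the centering shortcut and fill in the two-case analysis of the first route, verifying the bound $4\sqrt{c(1-c)}$ in each regime.
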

   \begin{proof}
It is easy to see that shifting and scaling the roots of the polynomial $p$ does not affect the ratio $\dfrac{|\sigma(p^{(cn)})|}{ |\sigma(p)|}$ and  we may therefore assume that the polynomial $p$ has roots in $[-1,1]$. Let $\alpha$ be the average of the roots of $p$. Applying theorem(\ref{mrr}) to the polynomial $q(z) = p(2z-1)$, which has roots in $[0,1]$ and the average of whose roots is $\frac{1+\alpha}{2}$ , we see that, 
   \[\lambda_{max}(p^{(cn)})) \leq \left(\sqrt{(1-\alpha)(1-c)}+\sqrt{(1+\alpha)\,c}\right)^2 - 1, \quad \text{if } c \geq \dfrac{1+\alpha}{2}.\]
   Working with the polynomial $q(z) = p(1-2z)$, we have that $q$ has roots in $[0,1]$ and the average of its roots is $\frac{1-\alpha}{2}$, and we see that, 
    \[\lambda_{min}(p^{(cn)} ) \geq  1- \left(\sqrt{(1+\alpha)(1-c)}+\sqrt{(1-\alpha)\,c}\right)^2 , \quad \text{if } c \geq \dfrac{1-\alpha}{2}.\]
    Without loss of generality, we may assume that $\alpha \leq  0$ (else we work with $r(z) = p(-z)$ instead. We therefore have that,
     \[ |\sigma(p^{(cn)})| \leq \begin{cases}
  4\sqrt{c(1-c)(1-\alpha^2)}, & c \geq \dfrac{1-\alpha}{2}\\
    \left(\sqrt{(1-\alpha)(1-c)}+\sqrt{(1+\alpha)\,c}\right)^2, &  \dfrac{1+\alpha}{2} \leq c \leq \dfrac{1-\alpha}{2}
    \end{cases}\]
    
    In the case when $c \geq  \dfrac{1-\alpha}{2}$, we note that the expression $4\sqrt{c(1-c)(1-\alpha^2)}$ is maximized when $\alpha = 0$ where it equals $4\sqrt{c(1-c)}$.

     For the case when $ \dfrac{1+\alpha}{2} \leq c \leq \dfrac{1-\alpha}{2}
$, but still, $c \geq \dfrac{1}{2}$: We see that for fixed $c$, the expression, 
\begin{eqnarray}\label{exp}
  \left(\sqrt{(1-\alpha)(1-c)}+\sqrt{(1+\alpha)\,c}\right)^2,
  \end{eqnarray}
  
 as a function of $\alpha$ increases from $-1$  to $2c-1$ and then decreases from $2c-1$ to $1$. We have the condition  $ \dfrac{1+\alpha}{2} \leq c \leq \dfrac{1-\alpha}{2}$  which gives us that $\alpha \leq \operatorname{min}\{2c-1, 1-2c\}$. Together with the condition $c \geq \frac{1}{2}$, this reduces to the condition $\alpha \leq 1-2c$. The expression (\ref{exp}) subject to this constraint on $\alpha$ thus has a maximum value at $\alpha = 1-2c$, where it equals $8c(1-c)$. It is easy to see that this is smaller than $4\sqrt{c(1-c)}$ for every $c \in [0,1]$. And finally, using the fact $p$ has roots in $[-1,1]$, 
\[\dfrac{|\sigma(p^{(cn)})|}{ |\sigma(p)|} \leq \dfrac{4\sqrt{c(1-c)}}{2} = 2\sqrt{c(1-c)}.\]
     \end{proof}
     
     We note a simple corollary,
     \begin{corollary}\label{comsh}
       Let $p$ be a polynomial of degree $n$. Then, for any $c \geq \frac{1}{2}$, we have,
\[|\operatorname{Re}(\sigma(p^{(cn)}))| \leq 2\sqrt{c-c^2}\, |\operatorname{Re}(\sigma(p))|.\] 
     \end{corollary}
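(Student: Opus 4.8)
The plan is to reduce the statement to the real-rooted case already handled in Lemma~\ref{rrs}, using the majorization comparison of Corollary~\ref{majrc}. First I would unwind the notation: for a polynomial $q$ I read $|\operatorname{Re}(\sigma(q))|$ as the length $\lambda_{\max}(R(q))-\lambda_{\min}(R(q))$ of the shortest interval containing the real parts of the roots of $q$, where, as in Section~5, $R(q)$ denotes the monic polynomial whose roots are exactly those real parts and $Dq=q'$. With this reading, $|\operatorname{Re}(\sigma(p^{(cn)}))| = |\sigma(RD^{(cn)}(p))|$ and $|\operatorname{Re}(\sigma(p))| = |\sigma(R(p))|$.

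Next I would apply Corollary~\ref{majrc} with $k=cn$, which gives both $\lambda_{\max}(RD^{(cn)}(p)) \le \lambda_{\max}(D^{(cn)}R(p))$ and $\lambda_{\min}(RD^{(cn)}(p)) \ge \lambda_{\min}(D^{(cn)}R(p))$; subtracting these two inequalities yields $|\sigma(RD^{(cn)}(p))| \le |\sigma(D^{(cn)}R(p))|$. In words, the spread of the real parts of the order-$cn$ critical points of $p$ is dominated by the spread of the roots of the $cn$-th derivative of the real-rooted, degree $n$ polynomial $R(p)$. Finally I would feed $R(p)$ into Lemma~\ref{rrs}: since $c\ge\tfrac12$, one gets $|\sigma(D^{(cn)}R(p))| = |\sigma((R(p))^{(cn)})| \le 2\sqrt{c-c^2}\,|\sigma(R(p))|$. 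Chaining the three relations gives $|\operatorname{Re}(\sigma(p^{(cn)}))| \le 2\sqrt{c-c^2}\,|\operatorname{Re}(\sigma(p))|$, as claimed.

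There is no genuine obstacle here once Corollary~\ref{majrc} and Lemma~\ref{rrs} are available; the only points requiring a moment's care are bookkeeping ones: that $R(p)$ really has degree $n$, so that ``$cn$-th derivative'' refers to the same differential operator on both sides; that $RD^{(k)}(p)$ and $D^{(k)}R(p)$ are honest real-rooted polynomials of degree $n-k$, so that $\lambda_{\max}$ and $\lambda_{\min}$ are well defined (the former by the definition of $R$, the latter because derivatives of real-rooted polynomials are real-rooted); and that the rounding implicit in $cn$ is handled consistently with Lemma~\ref{rrs}. This corollary is in turn the per-direction input to Theorem~\ref{convex}, which should follow by applying it after rotating $p$ by every $e^{i\theta}$ and comparing the supporting widths, hence the areas, of $\mathcal{K}(p^{(cn)})$ and $\mathcal{K}(p)$.
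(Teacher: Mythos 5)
Your argument is correct and is exactly the paper's proof, which the paper states tersely as "Combine corollary(\ref{majrc}) and lemma(\ref{rrs})"; you have simply spelled out the chain $|\sigma(RD^{(cn)}(p))| \le |\sigma(D^{(cn)}R(p))| \le 2\sqrt{c-c^2}\,|\sigma(R(p))|$ explicitly. The bookkeeping remarks at the end (degree of $R(p)$, real-rootedness of $D^{(k)}R(p)$, rounding of $cn$) are accurate and harmless.
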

     \begin{proof}
     Combine corollary(\ref{majrc}) and lemma(\ref{rrs}).
     \end{proof}
  
  We now deduce our main result, a quantitative Gauss-Lucas theorem, 
  
 \begin{theorem}\label{crs}
   Let $p$ be a polynomial of degree $n$. Then, for any $c \geq \frac{1}{2}$, we have that,
\[|\sigma(p^{(cn)})| \leq 4(c-c^2)\, |\sigma(p)|.\] 
   \end{theorem}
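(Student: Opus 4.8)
The plan is to reduce the area estimate to a statement about one–dimensional widths, to prove that statement from Corollary~\ref{comsh} by rotating the polynomial, and then to recover the area bound by a short convex–geometry argument. Throughout, $|\sigma(p)|$ in the statement is read as $|\mathcal K(p)|$, the area of the convex hull of the roots.

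First I would establish a directional shrinking estimate. For a unit vector $u\in\mathbb C$ and a bounded set $S\subset\mathbb C$, write $w_u(S)=\max_{z\in S}\operatorname{Re}(\bar u z)-\min_{z\in S}\operatorname{Re}(\bar u z)$ for the width of $\mathcal K(S)$ in direction $u$. The claim is that $w_u\big(\sigma(p^{(cn)})\big)\leq 2\sqrt{c-c^2}\,w_u\big(\sigma(p)\big)$ for every $u$. To see this, apply Corollary~\ref{comsh} to the rotated polynomial $q(z):=p(uz)$: since $q^{(k)}(z)=u^{k}p^{(k)}(uz)$, the roots of $q^{(k)}$ are exactly $\bar u$ times those of $p^{(k)}$, so $|\operatorname{Re}(\sigma(q^{(k)}))|=w_u(\sigma(p^{(k)}))$, and Corollary~\ref{comsh} for $q$ with exponent $cn$ gives the inequality.

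Next I would convert widths into an area bound. Put $K'=\mathcal K(p^{(cn)})$, $K=\mathcal K(p)$ and $\rho=2\sqrt{c-c^2}$, so that $w_u(K')\leq\rho\,w_u(K)$ for all $u$ by the previous step. Replace $K'$ and $K$ by their central symmetrizations $\tilde K'=\tfrac12\big(K'+(-K')\big)$ and $\tilde K=\tfrac12\big(K+(-K)\big)$: these are centrally symmetric bodies with the same width functions as $K'$ and $K$, so their support functions equal half their width functions, and $w_u(K')\leq\rho\,w_u(K)$ becomes $h_{\tilde K'}\leq\rho\,h_{\tilde K}$, i.e. $\tilde K'\subseteq\rho\,\tilde K$. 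Hence $|\tilde K'|\leq\rho^{2}|\tilde K|$, and since the Brunn--Minkowski inequality gives $|K'|\leq|\tilde K'|$, we get $|K'|\leq\rho^{2}|\tilde K|$.

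The hard part will be the final passage from $|\tilde K|$ back to $|K|$. Rogers--Shephard in the plane gives $|\tilde K|=\tfrac14|K+(-K)|\leq\tfrac32|K|$, which already proves the estimate with the weaker constant $6(c-c^{2})$; getting the stated $4(c-c^{2})=\rho^{2}$ needs strictly more, since $|\tilde K|\leq|K|$ is false for general convex bodies (a disk inscribed in a Reuleaux triangle already violates the naive ``squaring'' principle). To close the gap one would have to use the structure that symmetrization destroys — that $K'\subseteq K$ by Gauss--Lucas and that $\sigma(p)$ and $\sigma(p^{(cn)})$ share a centroid — for instance by estimating $|K'|$ directly from the support-function area formula $|K'|=\tfrac12\int_0^{2\pi}\big(h_{K'}^{2}-(h_{K'}')^{2}\big)\,d\theta$, splitting $h_{K'}$ into its parts even and odd under $u\mapsto-u$ and bounding the even part by $\tfrac12\rho\,w_{\cdot}(K)$. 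Making that last estimate yield the sharp constant is the crux of the whole argument.
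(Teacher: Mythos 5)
Your first two steps are exactly the paper's: the paper also obtains the directional estimate by applying Corollary~\ref{comsh} to the rotated polynomial $q(z)=p(e^{-i\theta}z)$, and then invokes the equality of centroids. Where you and the paper part ways is the final passage from widths to areas, and this is where your proposal has a genuine gap relative to the stated theorem: your symmetrization argument is correct as far as it goes, but it only yields $|\mathcal K(p^{(cn)})|\leq \tfrac32\,\rho^2\,|\mathcal K(p)| = 6(c-c^2)\,|\mathcal K(p)|$ with $\rho=2\sqrt{c-c^2}$, and you explicitly leave the improvement to $4(c-c^2)$ open. Since the loss comes from Rogers--Shephard (the central symmetral can be strictly larger in area than $K$), your route cannot reach the stated constant without a new idea, so as written the proposal proves a weaker theorem.

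That said, your diagnosis of the crux is accurate, and it is worth recording how the paper itself treats it: the paper disposes of this step in one sentence, asserting that ``writing out the areas in polar coordinates'' gives the ratio $\rho^2$ for two regions whose widths (shadows) contract by $\rho$ in every direction and which share a centroid. Read as a statement about arbitrary convex bodies with these two properties --- even adding the containment $\mathcal K(p^{(cn)})\subseteq\mathcal K(p)$ from Gauss--Lucas --- that assertion is false: take $K$ a Reuleaux triangle of width $1$ (area $\tfrac12(\pi-\sqrt3)\approx 0.705$) and $K'$ its inscribed disk, centered at the common centroid, of radius $1-\tfrac1{\sqrt3}$; then every width of $K'$ is $\rho\approx 0.845$ times that of $K$, yet the area ratio is $\approx 0.796 > \rho^2\approx 0.715$. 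So any correct proof of the constant $4(c-c^2)$ must use more than the width bounds and the centroid (for instance, genuinely exploiting that both bodies are hulls of root sets linked by differentiation), and the paper's sketch does not supply that extra input. In short: your argument is a rigorous proof of the $6(c-c^2)$ bound and essentially reproduces the paper's reduction, but neither your proposal nor, in my reading, the paper's one-line polar-coordinates step closes the final gap to $4(c-c^2)$; identifying and proving the missing geometric lemma is exactly the open point you flagged.
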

   \begin{proof}
   Corollary (\ref{comsh}) says that the ratio between the sizes of the projections of $\sigma(p^{(cn)})$ and $\sigma(p)$ onto the real axis is at most $2\sqrt{c-c^2}$. There is nothing special about the real axis; Working with $q(z) = p(e^{-i\theta}z)$, we see that the ratios of the projections onto the line $\operatorname{Arg}(z) = \theta$ are again bounded by $2\sqrt{c-c^2}$. We therefore have two polygons with the properties, 
   \begin{enumerate}
   \item The ratios of their shadows in every direction are at most $2\sqrt{c-c^2}$.
   \item They have the same centroid(since the roots of a polynomial and its critical points have the same average). 
   \end{enumerate} 
   Writing out the areas in polar coordinates shows that the ratio of the areas is at most $4(c-c^2)$. 
      \end{proof}
      
      Let us mention another result along these lines. 
      \begin{theorem}
Let $p$ be a degree $n$ polynomial with roots in $B(0,1)$ and with average of its roots $0$. Then, for any $c \geq \frac{1}{2}$,
\[\sigma(p^{(cn)}) \subset B(0, 2\sqrt{c-c^2}). \] 
      \end{theorem}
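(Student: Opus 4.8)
The plan is to deduce this from the real-rooted case, Theorem~\ref{rzerosh}, via a rotation together with the majorization inequality of Corollary~\ref{majrc}. Fix an arbitrary root $w$ of $p^{(cn)}$; it suffices to show $|w| \le 2\sqrt{c-c^2}$. Write $w = |w|\,e^{i\phi}$ and set $q(z) := p(e^{i\phi}z)$. The roots of $q$ are $e^{-i\phi}\lambda_j$, where $\lambda_1,\dots,\lambda_n$ are the roots of $p$; these still lie in $B(0,1)$, and their average is $e^{-i\phi}\cdot\frac1n\sum\lambda_j = 0$ since $p$ has root-average $0$. Moreover $q^{(cn)}(z) = e^{i(cn)\phi}\,p^{(cn)}(e^{i\phi}z)$, so $q^{(cn)}(|w|) = e^{i(cn)\phi}p^{(cn)}(w) = 0$; that is, $|w|$ is a real, nonnegative root of $q^{(cn)}$.

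Next I would run the majorization chain. Since $|w|$ is a real root of $q^{(cn)}$, it is one of the roots of $RD^{(cn)}(q)$, hence $|w| \le \lambda_{max}\big(RD^{(cn)}(q)\big)$. Corollary~\ref{majrc} gives $\lambda_{max}\big(RD^{(cn)}(q)\big) \le \lambda_{max}\big(D^{(cn)}(R(q))\big)$. Finally $R(q)$ is a real-rooted polynomial of degree $n$ whose roots $\operatorname{Re}(e^{-i\phi}\lambda_j)$ lie in $[-1,1]$ and sum to $\operatorname{Re}\big(e^{-i\phi}\sum\lambda_j\big) = 0$; so Theorem~\ref{rzerosh} applied to $R(q)$ gives $\operatorname{roots}\big(D^{(cn)}(R(q))\big)\subset[-2\sqrt{c-c^2},\,2\sqrt{c-c^2}]$, hence $\lambda_{max}\big(D^{(cn)}(R(q))\big)\le 2\sqrt{c-c^2}$. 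Chaining these three inequalities yields $|w| \le 2\sqrt{c-c^2}$, and since $w$ was an arbitrary root of $p^{(cn)}$ we conclude $\sigma(p^{(cn)})\subset B(0,2\sqrt{c-c^2})$.

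I do not expect a serious obstacle: Corollary~\ref{majrc} already packages the hard input (the Pereira--Malamud majorization of real parts of roots under differentiation), and the remainder is bookkeeping. The only points requiring a little care are checking that the rotation $z\mapsto e^{i\phi}z$ preserves both hypotheses — roots inside the unit ball and vanishing root-average — and that after passing to real parts one still lands in $[-1,1]$ with root-sum $0$, which is precisely what makes Theorem~\ref{rzerosh} applicable. (If $cn$ is not an integer one works with $\lceil cn\rceil$ throughout, as in Theorem~\ref{convex}; the case $c=1$ is vacuous.)
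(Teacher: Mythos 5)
Your proposal is correct and takes essentially the same route as the paper: control the real parts of the roots of $p^{(cn)}$ via Corollary (\ref{majrc}) (the Pereira--Malamud majorization), apply the real-rooted shrinking estimate of Theorem (\ref{rzerosh}) to $R(q)$, and use rotation invariance to handle every direction. Your variant, rotating so that the particular root under consideration lands on the positive real axis, is just a more explicit rendering of the paper's remark that the same projection bound holds for every line through the origin.
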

      \begin{proof}
      The real rooted polynomial $R(p)$ has roots in $(-1, 1)$ and the average of its roots is $0$. Theorem(\ref{rzerosh}) then implies that, 
      \[\sigma\left(R(p^{(cn)})\right) \subset (-2\sqrt{c-c^2}, 2\sqrt{c-c^2}). \] 
      And clearly, the same holds for any other line that we project the roots to. The theorem follows. 
   \end{proof}

\section{Conclusion}
We end with two comments, one on the relationship between the approach in this paper and that of MSS\cite{MSS3} and the second concerning tightness of bounds. 
\subsection{Sampling with and without replacement}

Applying the method of interlacing polynomials to principal submatrices is closely related to the original argument of Marcus, Spielman and Srivastava \cite{MSSICM}. Let us illustrate this with an example. A special case of the restricted invertibility principle is the following,
\begin{theorem}[MSS]\label{RISimple}
Given vectors $v_1, \cdots, v_m \in \mathbb{C}^n$ such that 
\[\sum_{i \in [m]} v_i v_i^{*}  = I,\]
for any $k \leq n$, there is a subset $\sigma$ of size $k$ such that,
\[\lambda_k \left[ \sum_{i \in \sigma} v_i v_i^{*} \right] \geq \left(1 - \sqrt{\dfrac{k}{n}}\right)^2 \dfrac{n}{m}.\]
\end{theorem}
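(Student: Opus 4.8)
The plan is to translate this statement---which is phrased in terms of sums of rank one matrices---into a statement about principal submatrices of a single orthogonal projection, so that the theorems of the previous sections apply directly.

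First I would encode the vectors as a matrix: let $V$ be the $m\times n$ matrix whose $i$-th row is $v_i^{*}$, so that $V^{*}V=\sum_{i\in[m]}v_iv_i^{*}=I_n$ and hence $V$ is an isometry. Then $Q:=VV^{*}$ is an orthogonal projection in $M_m(\mathbb{C})$ of rank $n$, with $(i,j)$ entry $v_i^{*}v_j$. For any $\sigma\subset[m]$, the principal submatrix $Q_\sigma$ of $Q$ on the rows and columns indexed by $\sigma$ is the Gram matrix $(v_i^{*}v_j)_{i,j\in\sigma}$, which has the same nonzero eigenvalues as $\sum_{i\in\sigma}v_iv_i^{*}=(P_\sigma V)^{*}(P_\sigma V)$. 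Hence, when $|\sigma|=k\le n$, the $n\times n$ matrix $\sum_{i\in\sigma}v_iv_i^{*}$ has rank at most $k$ and its $k$-th largest eigenvalue satisfies $\lambda_k\big[\sum_{i\in\sigma}v_iv_i^{*}\big]=\lambda_{\min}(Q_\sigma)$. So it suffices to find $\sigma\subset[m]$ with $|\sigma|=k$ for which $\lambda_{\min}(Q_\sigma)$ meets the asserted bound, equivalently for which $\lambda_{\max}\big((I_m-Q)_\sigma\big)=1-\lambda_{\min}(Q_\sigma)$ is correspondingly small.

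Next I would run the interlacing machinery on $A:=I_m-Q$. This is an orthogonal projection of rank $m-n$, so $\chi[A](x)=x^{n}(x-1)^{m-n}$: a real rooted polynomial of degree $m$ with all roots in $[0,1]$ and root-average $\alpha=(m-n)/m$. By Theorem \ref{inter high} there is a principal submatrix of $A$ of size $k$, say $A_\sigma$ with $|\sigma|=k$, such that $\lambda_{\max}(A_\sigma)\le\operatorname{max\,root}\chi^{(m-k)}[A]$. Since $k\le n$ we have $(m-k)/m\ge\alpha$, so Theorem \ref{mrr} applies with $c=(m-k)/m$ and yields
\[
\lambda_{\max}(A_\sigma)\ \le\ \Big(\sqrt{\tfrac{n}{m}\cdot\tfrac{k}{m}}+\sqrt{\tfrac{m-n}{m}\cdot\tfrac{m-k}{m}}\,\Big)^{2}\ =\ \frac{\big(\sqrt{nk}+\sqrt{(m-n)(m-k)}\,\big)^{2}}{m^{2}}.
\]
(Equivalently one may quote Theorem \ref{BT} directly, using $\operatorname{tr}(Q)=\operatorname{tr}(Q^2)=n/m$; it gives the same bound.)

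Finally, feeding this back gives $\lambda_{\min}(Q_\sigma)=1-\lambda_{\max}(A_\sigma)\ge 1-m^{-2}\big(\sqrt{nk}+\sqrt{(m-n)(m-k)}\,\big)^{2}$, and a short expansion shows the desired inequality $\lambda_{\min}(Q_\sigma)\ge(1-\sqrt{k/n})^{2}\,\tfrac{n}{m}$ is equivalent to the single inequality $\sqrt{nk}+\sqrt{(m-n)(m-k)}\le m$, which is Cauchy--Schwarz applied to $(\sqrt n,\sqrt{m-n})$ and $(\sqrt k,\sqrt{m-k})$. I do not expect a serious obstacle here: the one genuinely new ingredient is the reduction in the second paragraph (rank one sums versus principal submatrices of the Gram projection $VV^{*}$), and the only place requiring care is the bookkeeping between ``keeping $k$ coordinates'' and ``differentiating $m-k$ times''; once $\chi[I_m-Q]$ is identified as $x^{n}(x-1)^{m-n}$, the rest is a direct citation of Theorems \ref{inter high} and \ref{mrr} followed by the elementary Cauchy--Schwarz check.
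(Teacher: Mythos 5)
Your reduction is correct, and it is a genuinely different route from the one the paper uses for this statement. In the paper, Theorem~\ref{RISimple} is deduced from MSS's Theorem~\ref{RV}: one samples $v_1,\dots,v_m$ uniformly, independently, and \emph{with} replacement $k$ times, computes the expected characteristic polynomial $(1-D/m)^k x^n$, and applies the interlacing/barrier machinery to that polynomial. You instead execute the ``without replacement'' picture that the paper only sketches in the surrounding discussion: you pass to the rank-$n$ orthogonal projection $Q=VV^{*}\in M_m(\mathbb{C})$ (the paper's $Y^{*}Y$), identify $\lambda_k\bigl[\sum_{i\in\sigma}v_iv_i^{*}\bigr]$ with $\lambda_{\min}(Q_\sigma)$, and then invoke the paper's own Theorem~\ref{BT} (equivalently Theorems~\ref{inter high} and~\ref{mrr} applied to $I_m-Q$, whose characteristic polynomial is $x^{n}(x-1)^{m-n}$). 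Your bookkeeping is right: $\alpha=(m-n)/m$, $c=(m-k)/m\geq\alpha$ precisely when $k\leq n$, and the resulting bound $\lambda_{\min}(Q_\sigma)\geq 1-m^{-2}\bigl(\sqrt{nk}+\sqrt{(m-n)(m-k)}\bigr)^{2}$ reduces to the claimed $(1-\sqrt{k/n})^{2}\,n/m$ exactly when $\sqrt{nk}+\sqrt{(m-n)(m-k)}\leq m$, which is Cauchy--Schwarz and holds with equality only at $k=n$. So your argument is a bit more than a proof of Theorem~\ref{RISimple}: it derives it from this paper's main theorem rather than from MSS's Theorem~\ref{RV}, and it simultaneously verifies the paper's remark that the principal-submatrix (without-replacement) bound is strictly stronger whenever $k<n$.
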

MSS point out that this can be deduced from the following theorem, 
\begin{theorem}[MSS]\label{RV}
 Given independent random vectors $r_1, \cdots, r_k \in \mathbb{C}^n$ with finite support, we have that,
\[\mathbb{P} \left[ \lambda_{k} \left[\sum r_ir_i^* \right]  \geq  \lambda_{k} \mathbb{E}\chi\left[\sum r_ir_i^* \right]\right] > 0.\]
Further, if the outer products all have expectation equal to $\dfrac{I}{m}$, we have that,
\[\mathbb{E}\chi\left[\sum r_ir_i^* \right] = \left(1-D/m\right)^k x^n.\]
\end{theorem}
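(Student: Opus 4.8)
The plan is to reconstruct the interlacing-families argument of Marcus, Spielman and Srivastava in two essentially independent halves: a determinantal computation of the expected characteristic polynomial, and the probabilistic existence statement via an interlacing family. Throughout, $\lambda_k$ of a matrix denotes its $k$-th largest eigenvalue and $\lambda_k$ of a degree-$n$ polynomial its $k$-th largest root; note that any sum of at most $k$ matrices of the form $ww^*$ is positive semidefinite of rank at most $k$, so its characteristic polynomial is $x^{n-k}$ times a degree-$k$ polynomial with nonnegative roots, and $\lambda_k$ of the matrix equals $\lambda_k$ of that characteristic polynomial.

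\emph{Expected characteristic polynomial.} First I would prove, for independent finitely-supported $r_1,\dots,r_k$, the identity
\[
\mathbb{E}\,\chi\Big[\sum_{i=1}^{k} r_i r_i^*\Big](x)=\prod_{i=1}^{k}\bigl(1-\partial_{z_i}\bigr)\,\det\Big(xI+\sum_{i=1}^{k} z_i\,\mathbb{E}(r_i r_i^*)\Big)\Big|_{z_1=\dots=z_k=0}.
\]
By independence this reduces to the single-vector case $\mathbb{E}_r\det(xI-M-rr^*)=\det(xI-M)-\operatorname{tr}\bigl(\operatorname{adj}(xI-M)\,\mathbb{E}(rr^*)\bigr)$, which follows from the matrix determinant lemma, linearity of expectation, and Jacobi's formula $\operatorname{tr}(\operatorname{adj}(xI-M)A)=\partial_z\det(xI-M+zA)|_{z=0}$; the multivariate version then follows by iterating and commuting the polynomial operations past the expectations. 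Substituting $\mathbb{E}(r_ir_i^*)=I/m$ turns the determinant into $\bigl(x+\tfrac1m\sum_i z_i\bigr)^n$, which depends on the $z_i$ only through $w:=\tfrac1m\sum_i z_i$; hence each factor $1-\partial_{z_i}$ acts as $1-\tfrac1m\partial_w$, and on functions of $x+w$ one has $\partial_w=\partial_x=D$, so evaluating at $w=0$ produces $(1-D/m)^kx^n$.

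\emph{The interlacing family.} For a partial assignment of vectors $w_1\in S_1,\dots,w_j\in S_j$ (with $S_i$ the support of $r_i$) set
\[
q_{w_1,\dots,w_j}(x):=\mathbb{E}_{r_{j+1},\dots,r_k}\,\chi\Big[\sum_{\ell\le j}w_\ell w_\ell^*+\sum_{\ell>j}r_\ell r_\ell^*\Big](x),
\]
so the root is $q_\emptyset=\mathbb{E}\chi[\sum r_ir_i^*]$, the leaves are $q_{w_1,\dots,w_k}=\chi[\sum w_\ell w_\ell^*]$, and $q_{w_1,\dots,w_{j-1}}=\sum_{w\in S_j}\mathbb{P}(r_j=w)\,q_{w_1,\dots,w_{j-1},w}$. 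The key point is that each $q_{w_1,\dots,w_j}$, and more generally any nonnegative combination of a family of siblings, is real-rooted: applying the determinantal identity with the first $j$ vectors fixed writes it as $\prod_{\ell>j}(1-\partial_{z_\ell})\det\bigl(xI-M_j+\sum_{\ell>j}z_\ell\mathbb{E}(r_\ell r_\ell^*)\bigr)$ evaluated at $z=0$, where $M_j=\sum_{\ell\le j}w_\ell w_\ell^*\succeq0$; since $\det(X_0+\sum_\ell z_\ell X_\ell)$ is real stable whenever $X_0$ is Hermitian and the $X_\ell$ are positive semidefinite, the operators $1-\partial_{z_\ell}$ preserve real stability, and specializing the auxiliary variables to real values leaves a real-rooted univariate polynomial. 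By the Marcus--Spielman--Srivastava lemma, real-rootedness of all nonnegative sibling combinations is equivalent to the siblings sharing a common interlacer, so $\{q_{w_1,\dots,w_j}\}$ is an interlacing family.

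\emph{Extracting a leaf, and the obstacle.} A common interlacer of $f_1,\dots,f_s$ forces $\lambda_k$ of the average $\tfrac1s\sum f_\ell$ to lie between $\min_\ell\lambda_k(f_\ell)$ and $\max_\ell\lambda_k(f_\ell)$, so at every vertex some child $q_{w_1,\dots,w_j}$ satisfies $\lambda_k(q_{w_1,\dots,w_j})\ge\lambda_k(q_{w_1,\dots,w_{j-1}})$; descending from root to leaf yields $w_1,\dots,w_k$ with $\lambda_k[\sum_\ell w_\ell w_\ell^*]=\lambda_k(q_{w_1,\dots,w_k})\ge\lambda_k(q_\emptyset)=\lambda_k(\mathbb{E}\chi[\sum r_ir_i^*])$, a configuration attained with probability $\prod_i\mathbb{P}(r_i=w_i)>0$. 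The genuine obstacle is the real-rootedness of the partially-averaged polynomials $q_{w_1,\dots,w_j}$: this is exactly where one needs the theory of real stable polynomials and its closure under the operators $1-\partial_{z_i}$, rather than anything purely interlacing-theoretic; one must also take care to use the lower form of the interlacing-family lemma (bounding the $k$-th largest root from below), not the upper form used for Kadison--Singer-type conclusions.
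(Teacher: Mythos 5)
Your proposal reconstructs a result that this paper \emph{does not prove}: Theorem~\ref{RV} is stated with the attribution ``(MSS)'' and invoked only in the concluding Section~7.1 to explain, conceptually, how the paper's principal-submatrix technique relates to sampling with replacement; the paper cites \cite{MSSICM,MSS2} for the proof. So there is no in-paper argument to compare against, only the MSS original.

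Your reconstruction is essentially the MSS argument, and it is correct in substance. The determinantal computation of the expected characteristic polynomial — iterating the rank-one update $\mathbb{E}_r\det(xI-M-rr^*)=(1-\partial_z)\det(xI-M+z\,\mathbb{E}rr^*)\big|_{z=0}$ to obtain the mixed-characteristic-polynomial formula, then substituting $\mathbb{E}(r_ir_i^*)=I/m$ to collapse everything to a single variable $w$ and produce $(1-D/m)^kx^n$ — is exactly MSS's path, and the sign and degree bookkeeping all check out. The interlacing-family half is also the right skeleton: real stability of $\det(X_0+\sum z_\ell X_\ell)$ for Hermitian $X_0$ and PSD $X_\ell$, closure of real stability under $1-\partial_{z_\ell}$ and under real specialization, the Chudnovsky--Seymour/Dedieu/Fell equivalence between a common interlacer and real-rootedness of all convex combinations, and the root-to-leaf descent. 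You are also right to flag the one genuinely substantive point, namely that one needs the lower ($k$-th largest root from below) form of the interlacing descent rather than the upper form used in Kadison--Singer.

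Two small gaps in exposition, neither fatal, are worth closing. First, your determinantal identity as written only exhibits a single $q_{w_1,\dots,w_j}$ as a specialization of a real stable polynomial; to get real-rootedness of an arbitrary nonnegative combination $\sum_{w}c_w q_{w_1,\dots,w_{j-1},w}$ you should add a sentence observing that such a combination equals (up to positive scaling) the partial expectation with $r_j$ redistributed according to the weights $c_w$, so the same determinantal/stability argument applies verbatim. Second, the claim that a common interlacer forces $\lambda_k$ of a convex combination to lie between $\min_\ell\lambda_k(f_\ell)$ and $\max_\ell\lambda_k(f_\ell)$ for the $k$-th root (not just the largest) is a known but not entirely trivial extension of the MSS lemma; it follows from the sign-agreement-at-interlacer-roots argument in each gap of the interlacer, and deserves at least a citation or a one-line sketch.
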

Theorem (\ref{RISimple}) follows from theorem (\ref{RV})  by sampling the vectors  $v_1, \cdots, v_m$ $k$ times, uniformly, independently and \emph{with} replacement. 

Now suppose, we sampled the vectors $v_1, \cdots, v_m$ a total of $k$ times, uniformly and \emph{without} replacement; Note that this can no longer be modelled using independent random vectors. Any specialization would be of the form.
\[\sum_{i \in S} v_i v_i^{*}, \quad S \subset [m], \,|S| = k.\]
Letting $X$ be the $n \times k$ matrix with columns given by the above vectors, we are interested in the $k$'th eigenvalue of $XX^{*}$. But this equals the smallest eigenvalue of the $k \times k$ matrix $X^{*}X$. This last matrix is a principal submatrix of the $m \times m$ matrix $Y^{*}Y$ where $Y = \left[v_1\mid\cdots\mid v_m\right]$. We conclude that working with principal sub matrices instead of sums of outer products corresponds to sampling vectors uniformly \emph{without} replacement. The machinery of interlacing polynomials still works in this setting and one is able to get slightly better estimates. 

    \subsection{Tightness of bounds}
    
    Lemma (\ref{rrs}) shows that when the polynomial $p$ is real rooted, we have, letting $|\sigma(p)|$ be the size of the smallest interval containing $\sigma(p)$, that for any $c \geq \frac{1}{2}$,
    \[|\sigma \left(p^{(cn)}\right)| \leq 2\sqrt{c-c^2} |\sigma(p)|.\]

   This is sharp: The polynomial $(z^2-1)^m$ shows that one needs to take the derivative at least $\frac{n}{2}$ times where $n = 2m$ to have all the roots migrate inward from the end points. Further,  a simple calculation involving comparing coefficients shows that, 
   \[\sum_{\lambda \in \sigma(p^{(cn)})} \lambda^2 = \dfrac{(n-cn)(n-cn-1)}{n(n-1)} = n(1-c)^2 - \dfrac{nc}{n-1}.\]
   This implies that there is at least one root of modulus at least $\sqrt{1-c}-O(\frac{1}{n})$ and since the roots of $p^{(cn)}$ are symmetric about $0$, the smallest interval containing all the roots of $p^{(cn)}$ contains $[-\sqrt{1-c}+O(\frac{1}{n}), \sqrt{1-c)}-O(\frac{1}{n})]$. We conclude that in the class of real rooted polynomials, which we denote $\mathcal{Q}$ and for any $c \geq \frac{1}{2}$, 
  \[\operatorname{sup}_{p \in \mathcal{Q}} \dfrac{|\sigma \left(p^{(cn)}\right)|}{ |\sigma(p)|} \geq \sqrt{1-c}.\]
  This shows that the upper bound from theorem (\ref{rrs}) is optimal upto a constant. 
   For the complex rooted case, we make an analogous calculation with the polynomial $(z^3-1)^{n}$.
   We have, 
  \[p(z) = z^{3n} - nz^{3n-3} + \cdots,\]
  and
  \[p^{(3cn)}(z) = \binom{3n}{3n(1-c)}\,z^{3(1-c)n} - n\binom{3n-3}{3n(1-c)-3}z^{3n-3} + \cdots.\]
  The polynomial $p^{(3cn)}$ has roots of the form $\{\lambda_i, \lambda_i \omega, \lambda_i \omega^2 :  : 1 \leq i \leq (1-c)n\}$ where the $\lambda_i$ are non-negative reals and we have that,
  \[p^{(3cn)} = \prod_{i = 1}^{(1-c)n} (z^3 - \lambda_i^3).\]
Comparing coefficients, we see that, 
  \[\sum_{i = 1}^{cn} \lambda_i^3 = n \binom{3n-3}{3n(1-c)-3}/ \binom{3n}{3n(1-c)} = n (1-c)^3 + O\left(\frac{1}{n}\right).\]
 The largest of the $\lambda_i$, which we may assume is $\lambda_1$, is therefore at least $(1-c) + O(\frac{1}{n})$. The convex hull of the roots of $p^{(3cn)} $ is the equilateral triangle with vertices $\{\lambda_1, \lambda_1 \omega, \lambda_1 \omega^2\}$ and we see that, 
 \[\dfrac{|\mathcal{K}(p^{(cn)})|}{|\mathcal{K}(p)|} \geq  (1-c)^{4/3} + O\left(\frac{1}{n}\right).\]

 We conclude that, letting $\mathcal{P}$ be the class of all polynomials and working with areas of the convex hulls, 
   \[\operatorname{sup}_{p \in \mathcal{P}} \dfrac{|\mathcal{K}\left(p^{(cn)}\right)|}{ |\mathcal{K}(p)|} \geq (1-c)^{4/3}.\]
   I suspect this can be improved to $O(1-c)$ to match the upper bound.

\section*{Appendix: Proof of majorization lemma \ref{lemma:associativity}}

\newtheorem*{lemma:associativity}{Lemma \ref{lemma:associativity}}\label{xest}
\begin{lemma:associativity}
Suppose $\lambda_i$ for a $i \in [n]$ are a collection of real numbers in $[0,1]$ satisfying 
\[\sum \lambda_i = n \alpha, \qquad \sum \lambda_i^2 = n \beta.\]
Then, for any fixed $b > 1$, the quantity 
\[\varphi = \sum \dfrac{1}{b - \lambda_i},\] satisfies,
\[\varphi \leq \dfrac{ns}{b-1} + \dfrac{nt}{b-x}\]
where $s, t, x$ are given by, 
\[x = \dfrac{\alpha - \beta}{1-\alpha}, \qquad s = \dfrac{\beta - \alpha^2}{1-2\alpha + \beta}, \qquad t = 1- s = \dfrac{(1-\alpha)^2}{1-2\alpha+\beta}.\]
\end{lemma:associativity}

\begin{proof}
We regard the $\lambda_i$ as variables subject to the constraints,
\[\sum \lambda_i = n \alpha, \qquad \sum \lambda_i^2 = n \beta, \qquad \lambda_i \in [0,1],\]
and seek to optimize the quantity,
\[\varphi = \sum \dfrac{1}{b - \lambda_i}.\]
 Suppose $\{\lambda_i\}$ is a maximiser for $\varphi$. And suppose that we have that 
 \[0 \leq \lambda_3 < \lambda_2 < \lambda_1 < 1.\]
 We will derive a contradiction. This will then put strong constraints on the possible values the $\lambda_i$ can take. We can perturb $\lambda_1, \lambda_2, \lambda_3$ so that the two constraints are still satisfied. We will modify them to $\lambda_1 + \delta_1, \lambda_2-\delta_2, \lambda_3+\delta_3$, where $\delta_1, \delta_2, \delta_3$ are positive; For the constraints to be satisfied, we must have,
 \[\lambda_1 + \delta_1 + \lambda_2 -\delta_2 + \lambda_3 + \delta_3 = \lambda_1 + \lambda_2 + \lambda_3,\]
 yielding that,
 \begin{eqnarray}\label{sum}
 \delta_1+\delta_3 = \delta_2.
 \end{eqnarray}
 Also, we must have,
  \[(\lambda_1 + \delta_1)^2 + (\lambda_2 -\delta_2)^2 + (\lambda_3 + \delta_3)^2 = \lambda_1^2 + \lambda_2^2 + \lambda_3^2,\]
   yielding that,
 \[\lambda_2\delta_2 -\lambda_1\delta_1-\lambda_3\delta_3  = \delta_1^2+\delta_2^2+\delta_3^2,\]
 or, using (\ref{sum}), 
 \begin{eqnarray}\label{d1d3}
 (\lambda_2-\lambda_3)\delta_3= (\lambda_1-\lambda_2)\delta_1 +\delta_1^2+\delta_3^2+(\delta_1+\delta_3)^2.
 \end{eqnarray}
 We may solve this for $\delta_1$ for fixed $\delta_3$, with one solution being,  \[\delta_1 = \dfrac{-(\lambda_1-\lambda_2 + 2\delta_3) + \sqrt{(\lambda_1-\lambda_2 + 2\delta_3)^2 + 8\left[(\lambda_2-\lambda_3)\delta_3 - 2\delta_3^2\right]}}{4}.\]
 The discriminant is positive for $\delta_3 < \dfrac{\lambda_1-\lambda_2}{2}$ and in short, we see that under the hypothesis, for sufficiently small and positive $\delta_3$, this can be solved to yield a positive $\delta_1$ with $\delta_1 =O(\delta_3)$. We now claim that for sufficiently small $\delta_1, \delta_2, \delta_3$ (which can be taken to be all positive) satisfying the above constraints, we have that 
\[\dfrac{1}{b-\lambda_1-\delta_1}+\dfrac{1}{b-\lambda_2+\delta_2}+\dfrac{1}{b-\lambda_3-\delta_3} > \dfrac{1}{b-\lambda_1} + \dfrac{1}{b-\lambda_2}+\dfrac{1}{b-\lambda_3}.\]
 We need to show that 
 \[\dfrac{\delta_1}{(b-\lambda_1-\delta_1)(b-\lambda_1)} + \dfrac{\delta_3}{(b-\lambda_3-\delta_3)(b-\lambda_3)} > \dfrac{\delta_2}{(b-\lambda_2+\delta_2)(b-\lambda_2)}.\]
 Taking the $\delta_i$ suitably small, this will follow if we can show that,
 \[\dfrac{\delta_1}{(b-\lambda_1)^2} + \dfrac{\delta_3}{(b-\lambda_3)^2} > \dfrac{\delta_2}{(b-\lambda_2)^2}.\]
 We have that
 \begin{eqnarray*}
 \dfrac{\delta_1}{(b-\lambda_1)^2} + \dfrac{\delta_3}{(b-\lambda_3)^2} - \dfrac{\delta_2}{(b-\lambda_2)^2} &=& \dfrac{\delta_1}{(b-\lambda_1)^2} + \dfrac{\delta_3}{(b-\lambda_3)^2} - \dfrac{\delta_1 + \delta_3}{(b-\lambda_2)^2},
 \end{eqnarray*}
 which equals, 
 \begin{eqnarray*}
  \dfrac{1}{(b-\lambda_2)^2} \left[ \delta_1 (\lambda_1-\lambda_2) \dfrac{2b-\lambda_1-\lambda_2}{(b-\lambda_1)^2} - \delta_3 (\lambda_2-\lambda_3) \dfrac{2b-\lambda_2-\lambda_3}{(b-\lambda_3)^2}\right].
 \end{eqnarray*}
 Using the fact that $\delta_1 = O(\delta_3)$ together with (\ref{d1d3}), we see that, 
\[ (\lambda_2-\lambda_3)\delta_3= (\lambda_1-\lambda_2)\delta_1 + O(\delta_3^2)\]
 Together with the fact that
\begin{eqnarray*}
\dfrac{2b-\lambda_1-\lambda_2}{(b-\lambda_1)^2} - \dfrac{2b-\lambda_2-\lambda_3}{(b-\lambda_3)^2} = \dfrac{1}{b-\lambda_1}-\dfrac{1}{b-\lambda_3} + (b-\lambda_2)\left[\dfrac{1}{(b-\lambda_1)^2}-\dfrac{1}{(b-\lambda_3)^2}\right] > 0
\end{eqnarray*}
 we conclude that the potential function indeed increases.

 This implies that if $\{\lambda_i\}$ is a maximizer, then, the numbers, can take on at most two values apart from $1$. Let us call these values $x$ and $y$, where $x \leq y$ and let the values $ x, y, 1$ be taken on respectively $ ns, nt$ and $n(1-s-t)$ times. These frequencies are integers, but let us relax them further to reals. We have the following constraints, 
 \[sx+ty+1-s-t=\alpha, \qquad sx^2+ty^2+1-s-t =\beta,\]
 yielding that
 \[s = \dfrac{y(1-\alpha)-(\alpha-\beta)}{(1-x)(y-x)}, \qquad t = \dfrac{(\alpha-\beta)-x(1-\alpha)}{(1-y)(y-x)}.\]
 Since $s$ and $t$ cannot be non-negative and must have sum at most $1$, we must have that
 \begin{eqnarray}\label{cons}
 0 \leq  x \leq \dfrac{\alpha-\beta}{1-\alpha} \leq \dfrac{\beta}{\alpha}\leq y \leq 1, \quad \text{ and } \quad  y \geq \dfrac{\alpha x - \beta}{x-\alpha}, 
 \end{eqnarray}

 with no other constraints. We now seek to optimize the potential over $x, y$ in $[0,1)$, the potential being, 
 \begin{eqnarray*}
  \varphi &=& n\left[\dfrac{s}{b-x}+\dfrac{t}{b-y}+\dfrac{1-s-t}{b-1}\right]\\
  &=& \dfrac{n}{b-1}+\dfrac{n}{b-1}\left[ \dfrac{s(1-x)}{b-x} + \dfrac{t(1-y)}{b-y}\right]\\
  &=& \dfrac{n}{b-1}-\dfrac{n}{b-1} \left[ \dfrac{y(1-\alpha)-(\alpha-\beta)}{(b-x)(y-x)} + \dfrac{(\alpha-\beta)-x(1-\alpha)}{(b-y)(y-x)}\right]\\
  &=& \dfrac{n}{b-1}-\dfrac{n(1-\alpha)}{b-1} \left[ \dfrac{1}{b-x} + \dfrac{1}{b-y} - \dfrac{b-\dfrac{\alpha-\beta}{1-\alpha}}{(b-x)(b-y)} \right].
 \end{eqnarray*}

 Recalling that $x \leq \dfrac{\alpha - \beta}{1-\alpha} \leq y$, it is easy to see that this expression decreases in $y$ for fixed $x$ and increases in $x$ for fixed $y$.  The constraint (\ref{cons}) says that for fixed $x$, the variable $y$ can have minimum value $\dfrac{\alpha x - \beta}{x-\alpha}$. We thus have that the maximizer happens for a tuple $(x, y) = (x, \dfrac{\alpha x - \beta}{x-\alpha})$ for some $x \in [0, \dfrac{\alpha-\beta}{1-\alpha}]$.

 We need to minimize 
 \[ \dfrac{1}{b-x} + \dfrac{1}{b-y} - \dfrac{b-\dfrac{\alpha-\beta}{1-\alpha}}{(b-x)(b-y)}  = \dfrac{b - x - y+\dfrac{\alpha-\beta}{1-\alpha}}{b(b-x-y+xy)}\]
 We also have the constraint, $y = \dfrac{\alpha x - \beta}{x-\alpha}$, which can be written as $xy = \alpha(x+y)-\beta$. We therefore need to minimize
 \[\dfrac{b+\dfrac{\alpha-\beta}{1-\alpha} - (x + y)}{\dfrac{b-\beta}{1-\alpha}-(x+y)} = 1-\dfrac{b}{\dfrac{b-\beta}{1-\alpha}-(x+y)}. \]
 
 It is easy to see that this is minimized when $x+y$ is as large as possible, but since $x+y = x+\alpha+\dfrac{\beta - \alpha^2}{\alpha - x}$, this maximum value of the potential is attained at the maximum possible value of $x$, namely $x  = \dfrac{\alpha-\beta}{1-\alpha}$. We will then have that,
  \[y = \dfrac{\alpha\left(\dfrac{\alpha-\beta}{1-\alpha}\right) - \beta}{\dfrac{\alpha-\beta}{1-\alpha} - \alpha} = 1.\]

 A simple calculation now shows that the potential is exactly the expression in the statement of the lemma, namely,
 \[\varphi = \dfrac{ns}{b-1} + \dfrac{nt}{b-x}, \]
 where, 
 \[x = \dfrac{\alpha - \beta}{1-\alpha}, \qquad s = \dfrac{\beta - \alpha^2}{1-2\alpha + \beta}, \qquad t = 1- s = \dfrac{(1-\alpha)^2}{1-2\alpha+\beta}.\]

\end{proof}

\end{document}